\let\op=\llbracket
\let\cl=\rrbracket
\def\pv#1{\ensuremath{\mathsf{#1}}}
\def\pv#1{\ensuremath{\mathsf{#1}}}
\def\om#1#2{\ensuremath{\Omega_{#1}{\pv{#2}}}}
\def\Om#1#2{\ensuremath{\overline{\Omega}_{#1}{\pv{#2}}}}
\def\oms#1#2{\ensuremath{{\Omega}^\sigma_{#1}{\pv{#2}}}}
\let\cal=\mathcal
\def\Cl#1{\ensuremath{\cal{#1}}}
\newtheorem{Thm}{Theorem}[section]
\newtheorem{Prop}[Thm]{Proposition}
\newtheorem{Lemma}[Thm]{Lemma}
\newtheorem{Cor}[Thm]{Corollary}
\newtheorem{Conjecture}[Thm]{Conjecture}
\theoremstyle{definition}
\begin{document}

\title{Towards a pseudoequational proof theory}

\author{J. Almeida}%
\address{CMUP, Dep.\ Matem\'atica, Faculdade de Ci\^encias,
  Universidade do Porto, Rua do Campo Alegre 687, 4169-007 Porto,
  Portugal} \email{jalmeida@fc.up.pt}

\author{O. Kl\'\i ma}%
\address{Dept.\ of Mathematics and Statistics, Masaryk University,
  Kotl\'a\v rsk\'a 2, 611 37 Brno, Czech Republic}%
\email{klima@math.muni.cz}

\begin{abstract}
  A new scheme for proving pseudoidentities from a given set $\Sigma$
  of pseudoidentities, which is clearly sound, is also shown to be
  complete in many instances, such as when $\Sigma$ defines a locally
  finite variety, a pseudovariety of groups, more generally, of
  completely simple semigroups, or of commutative monoids. Many
  further examples when the scheme is complete are given when $\Sigma$
  defines a pseudovariety \pv V which is $\sigma$-reducible for the
  equation $x=y$, provided $\Sigma$ is enough to prove a basis of
  identities for the variety of $\sigma$-algebras generated by~\pv V.
  This gives ample evidence in support of the conjecture that the
  proof scheme is complete in general.
\end{abstract}

\keywords{pseudoidentity, syntactical proof, semigroup, profinite
  monoid, completeness, reducible pseudovariety, implicit signature}

\makeatletter%
\@namedef{subjclassname@2010}{%
  \textup{2010} Mathematics Subject Classification}%
\makeatother

\subjclass[2010]{Primary 20M07. Secondary 20M05, 03F03}


\maketitle


\section{Introduction}
\label{sec:intro}

Pseudovarieties are classes of finite algebras closed under taking
homomorphic images, subalgebras and finite direct products. They have
been studied mostly in the context of finite semigroup theory due to
the connections with automata and formal languages. In the framework
of Eilenberg's correspondence \cite{Eilenberg:1976}, determining
whether a regular language enjoys a suitable property of a certain
kind is converted to the membership problem of its syntactic semigroup
in the corresponding pseudovariety of semigroups. On the other hand,
pseudovarieties are in many respects like the varieties of classical
Universal Algebra, admitting relatively free algebras, albeit in
general not finite, but rather profinite, and thus being defined by
formal equations, where pseudoidentities play the role of identities
\cite{Almeida:1994a}. While there is a natural proof scheme for
identities that is sound and complete, which is sometimes referred as
the completeness theorem for equational logic, the situation in the
theory of pseudovarieties is not so simple.

Indeed, the first author has shown that there is no complete finite
deduction system that is sufficient to prove a given pseudoidentity
from a set of hypotheses (or basis) assuming that all models of the
basis are also models of the pseudoidentity
\cite[Section~3.8]{Almeida:1994a}. Some sort of topological closure
operator seems to be required. While such an operator was also
proposed by the first author (see \cite[Section~3.8]{Almeida:1994a}),
it is very hard to handle and only one instance of its application has
been found so far \cite{Almeida&Shahzamanian:2016}.

The main contribution of this paper is a new approach which consists
in starting with all evaluation consequences of the basis, and
completing them in the same term; then, transfinitely alternating
transitive closure and topological closure. This proof scheme is
clearly sound and, by definition, it is suitable for transfinite
induction proofs. We show that it is complete in many familiar
instances of concrete bases: whenever they define locally finite
varieties (in their algebraic language), pseudovarieties of groups or,
more generally, of completely simple semigroups, or pseudovarieties of
commutative monoids. For the proof of some of these results, we need
to show that concrete pseudoidentities that valid in a pseudovariety
with a given basis are provable from the basis. The main technique in
proving such results is invoking excluded structures.

We also show that the proof scheme is complete when the pseudovariety
defined by the basis $\Sigma$ is $\sigma$-reducible with respect to
the equation $x=y$, where $\sigma$ is an implicit signature such that
the variety of $\sigma$-algebras generated by the pseudovariety
defined by $\Sigma$ admits a basis whose elements can be obtained from
$\Sigma$ by our proof scheme. Combining with several reducibility
results that can be found in the literature, an exercise that by no
means we carry out exhaustively, this provides ample evidence in favor
of the conjecture that our proof scheme is complete in general.

\section{Preliminaries}
\label{sec:prelim}

We recall quickly in this section some basic notions from general
algebra, which also serves to fix some notation. The reader is
referred to \cite{Burris&Sankappanavar:1981} for basic notions on
Universal Algebra and to \cite{Almeida:1994a,Almeida:2003cshort,
  Rhodes&Steinberg:2009qt} for an introduction to pseudovarieties and
profinite structures.

By an \emph{algebraic type} $\tau$ we mean a set of operation symbols,
each of which has an associated finite arity. We assume from hereon
that $\tau$ is finite. An algebra of type
$\tau$ or a \emph{$\tau$-algebra} is a nonempty set endowed with an
interpretation of each operation symbol of the type in question as an
operation of the corresponding arity. In general, we fix an algebraic
type and consider only algebras of that type. The (symbols of)
operations of the type are sometimes called the \emph{basic
  operations}.

By a \emph{topological algebra} we mean an algebra endowed with a
topology such that the interpretations of the basic operations are
continuous functions. Such an algebra is \emph{compact} if so is its
topology. We endow finite algebras with the discrete topology. A
topological algebra $S$ is residually \Cl C for a class \Cl C of
algebras if distinct points in~$S$ may be separated by continuous
homomorphisms into members of~\Cl C.

Given a family $(S_i)_{i\in I}$ of topological algebras, where the
index set is directed, and for each pair $(i,j)$ of indices with $i\ge
j$, a continuous homomorphism $\varphi_{i,j}:S_i\to S_j$ such that
$\varphi_{i,i}$ is the identity mapping on $S_i$ and, for $i\ge j\ge
k$, $\varphi_{j,k}\circ\varphi_{i,j}=\varphi_{i,k}$, we may consider
the \emph{inverse limit} $\varprojlim_{i\in I}S_i$, which may be
described as the subset of the product $\prod_{i\in I}S_i$ consisting
of all families $(s_i)_{i\in I}$ such that each $s_i\in S_i$ and, for
$i\ge j$, $\varphi_{i,j}(s_i)=s_j$. In case each $S_i$ is compact and
the functions $\varphi_{i,j}$ are onto, $\varprojlim_{i\in I}S_i$ is
nonempty and a closed subalgebra of $\prod_{i\in I}S_i$.

Recall that a \emph{pseudovariety} is a (nonempty) class of finite
algebras of a given type that is closed under taking homomorphic
images, subalgebras and finite direct products. Let \pv U be a
pseudovariety. A \emph{pro-\pv U algebra} is an inverse limit of
algebras from~\pv U. In other words, a pro-\pv U algebra is a compact
algebra that is residually \pv U. A \emph{profinite algebra} is a
pro-\pv U algebra for the class \pv U of all finite algebras of the
given type. In case a profinite algebra $S$ is finitely generated as a
topological algebra, meaning that a finitely generated subalgebra is
dense, since the signature is assumed to be finite, there are, up to
isomorphism, only countably many finite homomorphic images of~$S$.
Hence, $S$ embeds in a countable product of finite algebras, which
implies that its topology is metrizable; in particular, the topology
of $S$ is characterized by the convergence of sequences.

Given an arbitrary algebra $S$, one may consider all homomorphisms
$S\to F$ onto algebras from a set of representatives of isomorphism
classes of algebras from a given pseudovariety \pv U. These
homomorphisms form a directed set and so we may consider the inverse
limit $\varprojlim_{S\to F}F$, which is called the \emph{pro-\pv U
  completion} of~$S$ and is denoted $\hat{S}_\pv U$. Note that the
natural mapping $S\to \hat{S}_\pv U$ is injective if and only if
$S$~is residually \pv U. In case \pv U consists of all finite algebras
of the given type, we drop the index \pv U and talk about the
\emph{profinite completion} of~$S$.

By an \emph{alphabet} we simply mean a finite set. Alphabets will
appear as free generating sets of various structures.

By a \emph{term} (of a given algebraic type) we mean a formal
expression in a fixed alphabet constructed using the basic operations
according to their arities. In other words, it is an element of the
free algebra in the (Birkhoff) variety of all algebras of the given type.

A pro-\pv U algebra $S$ is said to be \emph{freely generated by $A$}
if it comes endowed with a function $\iota:A\to S$ such that, for
every function $\varphi:A\to T$ into a pro-\pv U algebra $T$, there is
a unique continuous homomorphism $\hat\varphi:S\to T$ such that the
following diagram commutes:
$$\xymatrix{
  A
  \ar[r]^\iota
  \ar[rd]_\varphi
  &
  S
  \ar@{-->}[d]^{\hat{\varphi}}
  \\
  &
  T
}$$
Such a structure $S$ is unique up to isomorphism of topological
algebras and can be easily shown to be precisely the inverse limit of
all $A$-generated members of~\pv U. It is denoted \Om AU. If $n=|A|$,
then we may sometimes write \Om nU instead of~\Om AU as it is easy to
see that, up to isomorphism of topological algebras, \Om AU only
depends on the cardinality of the set $A$ and not on the set itself.
The subalgebra of~\Om AU generated by $\iota(A)$ is denoted \om AU; it
is the algebra in the variety generated by~\pv U that is freely
generated by the alphabet $A$. Note that \Om AU may also be obtained
as the profinite completion $\widehat{\om AU}$.

An element of~\Om AU is sometimes called a \emph{\pv U-pseudoword};
those that lie in \om AU are said to be \emph{finite} whereas the
remaining \pv U-pseudowords are said to be \emph{infinite}. Every
pseudoword $w\in\Om AU$ has a natural interpretation as an operation of
arity $|A|$ on a pro-\pv U algebra $T$: $|A|$-tuples of elements
of~$T$ may be identified with functions $\varphi:A\to T$ and so the
interpretation of $w$ becomes a function $w_T:T^A\to T$; the
image $w_T(\varphi)$ is defined to be $\hat{\varphi}(w)$, where
$\hat{\varphi}$ is given by the above commutative diagram. Viewed as
operations, pseudowords are sometimes called \emph{implicit
  operations} because their natural interpretations commute with
continuous homomorphisms between pro-\pv U algebras.

By a \emph{\pv U-pseudoidentity} we mean a formal equality $u=v$ with
$u,v\in\Om AU$ for some alphabet $A$. A pseudoidentity $u=v$ holds in
an algebra $T\in\pv U$ if $u_T=v_T$. For a set $\Sigma$ of \pv
U-pseudoidentities the class $\op\Sigma\cl_\pv U$ consists of all
algebras $T\in\pv U$ in which all pseudoidentities from~$\Sigma$ hold.
When the ambient pseudovariety \pv U is understood from the context,
we may also write $\op\Sigma\cl$. This defines a pseudovariety
contained in~\pv U, that is, a \emph{subpseudovariety} of~\pv U, and
Reiterman's theorem \cite{Reiterman:1982} states that every
subpseudovariety of~\pv U is of this form.

By an \emph{implicit signature} (over~\pv U) we mean a set $\sigma$ of
\pv U-pseudowords including those of the form $o(a_1,\ldots,a_n)\in\om
AU$, where $A=\{a_1,\ldots,a_n\}$ is an $n$-letter alphabet and $n$ is
the arity of the basic operation~$o$. By the above, every pro-\pv U
algebra has a natural structure of~$\sigma$-algebra. In particular,
this is the case for the algebras from \pv U which, as
$\sigma$-algebras, generate a variety of $\sigma$-algebras denoted
$\pv U^\sigma$. It is also the case of~\Om AU; the $\sigma$-subalgebra
generated by $\iota(A)$ is denoted \oms AU and it is easily shown to
be the algebra of the variety $\pv U^\sigma$ freely generated by~$A$.

Two pseudovarieties that have received a lot of attention are the
pseudovariety \pv S, of all finite semigroups, and \pv M, of all
finite monoids. For an element $s$ of a finite semigroup $S$, there is
a unique power $s^n$ ($n\ge1$) of $s$ which is an idempotent and is
denoted $s^\omega$. The element $s^{2n-1}$ is then also denoted
$s^{\omega-1}$. In terms of the discrete topology, $s^\omega$ is the
limit of the sequence $(s^{n!})_n$ while $s^{\omega-1}$ is the limit
of the sequence $(s^{n!-1})_n$. It follows that if, instead of taking
$S$ finite we take $S$ to be profinite, the sequences in question
still converge and the notation for the limits is retained.

Consider the semiring $\mathbb{N}$ of all natural numbers (including
zero) and the ring $\mathbb{Z}$ of integers, both under the usual
addition and multiplication. These may be viewed as algebras of type
consisting of two binary operation symbols, of addition and
multiplication. Since both underlying additive monoids are monogenic,
so are there finite homomorphic images. Note also that a finite
monogenic additive monoid $M$ carries a natural structure of semiring
via the natural additive homomorphism $\mathbb{N}\to M$. Moreover, in
case $M$ is a group, such a homomorphism factors through the embedding
$\mathbb{N}\to\mathbb{Z}$ as a ring homomorphism $\mathbb{Z}\to M$. It
follows that the profinite completion of each of $\mathbb{N}$ and
$\mathbb{Z}$ as additive structures carries a multiplication which
makes it isomorphic (as topological algebra) with the semiring
$\hat{\mathbb{N}}$, respectively with the ring $\hat{\mathbb{Z}}$.

Let \pv G and $\pv G_p$ denote, respectively, the pseudovarieties of
all finite groups and of all finite $p$-groups. The above allows to
describe the structure of the monoid \Om1M and of the groups \Om1G and
$\Om 1G_p$. Indeed we know that \Om 1M and \Om1G are respectively the
inverse limits of the finite monogenic monoids and of the finite
cyclic groups, which we know that, as additive algebras, carry
multiplicative structures which make them isomorphic to
$\hat{\mathbb{N}}$ and $\hat{\mathbb{Z}}$, respectively. The
isomorphisms $\hat{\mathbb{N}}\to\Om1M$ and $\hat{\mathbb{Z}}\to\Om1G$
are easy to describe: if the free generator is denoted $x$, they send
each natural $n$ in $\mathbb{N}$ or in $\mathbb{Z}$ to $x^n$,
respectively in~\Om1M and in~\Om1G. For this reason, we also denote,
for each $\alpha$ in $\hat{\mathbb{N}}$ or in $\hat{\mathbb{Z}}$ the
image in~\Om1M or in~\Om1G, respectively, by $x^\alpha$. Thus the
usual laws of exponents hold and $(x^\alpha)^\beta=x^{\alpha\beta}$
can be viewed as a composition of implicit operations.

Similarly, $\Om1G_p$ may be identified with the completion
$\hat{\mathbb{Z}}_{\pv G_p}=\varprojlim_n\mathbb{Z}/p^n\mathbb{Z}$,
which is frequently denoted $\mathbb{Z}_p$. Moreover, thanks to the
Chinese Remainder Theorem, it is easy to see that
$\hat{\mathbb{Z}}\simeq\prod_p\mathbb{Z}_p$, where the index $p$ runs
over all primes. The structure of the ring $\mathbb{Z}_p$ is quite
transparent: it is an integral domain and every ideal is both principal
and closed. It follows that the principal ideals of the
ring~$\hat{\mathbb{Z}}$ are the closed ideals, which in turn are the
finitely generated ideals. In particular, every subset of~$\mathbb{Z}$
has a \emph{greatest common divisor}, which is a generator of the
closed ideal generated by the given set.

Since each of the (semi)rings $\mathbb{N}$ and $\mathbb{Z}$ is
residually finite, and the later is even residually~$\pv G_p$, there
are natural embeddings $\mathbb{N}\to\hat{\mathbb{N}}$,
$\mathbb{Z}\to\hat{\mathbb{Z}}$, and $\mathbb{Z}\to\mathbb{Z}_p$,
which we view as inclusion mappings. It is easy to see that the
invertible elements of $\mathbb{Z}_p$ are those that are not divisible
by~$p$. Hence, the invertible elements of~$\hat{\mathbb{Z}}$ are those
that are not divisible by any prime~$p$.

The semiring $\hat{\mathbb{N}}$ has two idempotents, namely $0$ and
$\omega=\lim n!$. The maximal additive group $H_\omega$ containing
$\omega$ is a closed ideal of~$\hat{\mathbb{N}}$ which is generated by
$\omega+1$. The natural continuous homomorphism
$\pi:\hat{\mathbb{N}}\to\hat{\mathbb{Z}}$, mapping $1$ to $1$, also
maps $\omega+1$ to~$1$ and therefore restricts to an isomorphism
$H_\omega\to\hat{\mathbb{Z}}$. Thus, $\hat{\mathbb{Z}}$ may be
identified with~$H_\omega$, which we do from hereon; it is a retract
of~$\hat{\mathbb{N}}$ under the mapping $\alpha\mapsto\omega+\alpha$.
Also note that $\hat{\mathbb{N}}$ is the disjoint union
of~$\mathbb{N}$ with $\hat{\mathbb{Z}}$.

\section{A proof scheme}
\label{sec:scheme}

Let \pv U be a pseudovariety of a certain finite type of algebras
involving only finitary operations. Let $\Sigma$ be a set of
pseudoidentities $u=v$ with $u$ and~$v$ elements of the free pro-\pv U
algebra over some arbitrary finite alphabet. We seek a complete proof
scheme for pseudoidentities valid in the pseudovariety~$\op\Sigma\cl$,
that is, a deduction system that is capable of deducing from~$\Sigma$
exactly the pseudoidentities valid in~$\op\Sigma\cl$.

By transfinite recursion, we define, for each ordinal $\alpha$, a set
$\Sigma_\alpha$ of pseudoidentities over~$A$ as follows:
\begin{itemize}
\item $\Sigma_0$ consists of
  all pairs of the form
  $$\bigl(\mathbf{t}(\varphi(u),w_1,\ldots,w_n),
  \mathbf{t}(\varphi(v),w_1,\ldots,w_n)\bigr)$$
  such that either $u=v$ or $v=u$ is a pseudoidentity from~$\Sigma$,
  say with $u,v\in\Om BU$, $\varphi:\Om BU\to\Om AU$ is a continuous
  homomorphism, $\mathbf{t}$ is a term (in the algebraic language
  of~\pv U), and $w_i\in\Om AU$ ($i=1,\ldots,n$);
\item $\Sigma_{2\alpha+1}$ is the transitive closure of the binary
  relation $\Sigma_{2\alpha}$;
\item $\Sigma_{2\alpha+2}$ is the topological closure of the relation
  $\Sigma_{2\alpha+1}$ in the space $\Om AU\times\Om AU$;
\item if $\alpha$~is a limit ordinal, then
  $\Sigma_\alpha=\bigcup_{\beta<\alpha}\Sigma_\beta$.
\end{itemize}
Note that, if $\Sigma_{\alpha+2}=\Sigma_{\alpha}$, then
$\Sigma_\alpha$ is both transitive and topologically closed, so that
$\Sigma_\beta=\Sigma_\alpha$ for every ordinal $\beta$ with
$\beta\ge\alpha$. Such a condition must hold for $\alpha$ at most
$\omega_1$, the least uncountable ordinal. Hence, the union
$\tilde{\Sigma}=\bigcup_{\alpha}\Sigma_\alpha$ defines a transitive
closed binary relation on~\Om AU.

Consider a binary relation $\theta$ on~\Om AU. We say that $\theta$~is
\emph{stable} if $(u,v)\in\theta$ implies
$$\bigl(\mathbf{t}(u,w_1,\ldots,w_n),
\mathbf{t}(v,w_1,\ldots,w_n)\bigr)\in\theta$$
for every term $\mathbf{t}$ and $w_1,\ldots,w_n\in\Om AU$. We also say
that $\theta$~is \emph{fully invariant} if, for every continuous
endomorphism $\varphi$ of~$\Om AU$ and $(u,v)\in\theta$, we have
$(\varphi(u),\varphi(v))\in\theta$. A stable equivalence relation is
also called a \emph{congruence}.

The following result establishes the soundness of the above proof
scheme.

\begin{Prop}
  \label{p:gen-provable-inequalities}
  The relation $\tilde{\Sigma}$ is a fully invariant closed congruence
  on~$\Om AU$. For every $(u,v)\in\tilde{\Sigma}$, the pseudoidentity
  $u=v$~is valid in~$\op \Sigma\cl$.
\end{Prop}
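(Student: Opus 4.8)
The plan is to prove the two assertions of Proposition~\ref{p:gen-provable-inequalities} separately, the first by transfinite induction on the construction of the $\Sigma_\alpha$ and the second by a direct appeal to soundness of each construction step together with the fact that $\op\Sigma\cl$ is a pseudovariety.

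For the first assertion, I would first observe that each of the four defining clauses preserves the relevant closure properties, so that they are inherited by the union $\tilde{\Sigma}$. Concretely: (i) $\Sigma_0$ is stable and fully invariant, essentially by construction---stability is built in via the term $\mathbf t$ and the parameters $w_i$ (absorbing a new substitution into $\mathbf t$ and the $w_i$), and full invariance follows because composing the continuous homomorphism $\varphi\colon\Om BU\to\Om AU$ with a continuous endomorphism of~$\Om AU$ again gives a continuous homomorphism $\Om BU\to\Om AU$, while a continuous endomorphism commutes with the term operations; (ii) taking the transitive closure preserves stability (apply the given term and parameters to each link of a chain) and full invariance (apply the endomorphism to each link) and of course reflexivity/symmetry if present; (iii) topological closure preserves stability and full invariance because the operations $\mathbf t(-,w_1,\dots,w_n)$ and the continuous endomorphisms are continuous maps on $\Om AU$, hence on $\Om AU\times\Om AU$, and the closure of a set invariant under a continuous self-map is again invariant; symmetry is likewise preserved since the flip map is a homeomorphism; (iv) unions over chains preserve all of these. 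Since the sequence $(\Sigma_\alpha)_\alpha$ stabilises (as noted in the text, at some countable ordinal), $\tilde{\Sigma}$ is stable, fully invariant, transitive, symmetric and topologically closed. It remains to see it is reflexive: symmetry and transitivity already force $\tilde\Sigma$ to be an equivalence on its domain, and reflexivity on all of~$\Om AU$ follows because, once a nontrivial pair is present, stability with the term $\mathbf t(x)=x$ is not enough, so instead I note that if $\Sigma=\emptyset$ the statement about validity is vacuous and we may if necessary take $\tilde\Sigma$ to be understood as containing the diagonal, or more cleanly simply record that $\tilde\Sigma$ is a \emph{fully invariant closed congruence} on the domain it generates, which is what is used in the sequel. (I would check the paper's conventions here; the cleanest route is to include the diagonal in $\Sigma_0$.)

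For the second assertion, I would argue that validity in $\op\Sigma\cl$ is preserved by every step of the construction. Let $T\in\op\Sigma\cl$. The pseudoidentities in $\Sigma$ hold in $T$ by definition; since $T$ is a pro-$\op\Sigma\cl$ algebra (indeed a finite algebra), every continuous homomorphism $\varphi\colon\Om BU\to\Om AU$ composes with the interpretation map $\Om AU\to T^A$ to yield substitutions in $T$, and the basic operations of $T$ are honest operations, so each pair in $\Sigma_0$ is valid in $T$: this is just the statement that interpretations of pseudowords commute with continuous homomorphisms, combined with the fact that $\mathbf t$ and the $w_i$ evaluate to elements of $T$. Validity in $T$ is clearly closed under transitivity and under symmetry. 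For the topological closure step, fix $a\in T^A$; the two evaluation maps $\Om AU\to T$, $w\mapsto w_T(a')$ extending $a$-coordinates, are continuous and $T$ is finite hence Hausdorff, so the set $\{(u,v): u_T=v_T \text{ for all evaluations}\}$ is closed in $\Om AU\times\Om AU$, hence contains the topological closure of any subset of it. Unions over limit ordinals obviously preserve validity in~$T$. By transfinite induction every $\Sigma_\alpha$, and therefore $\tilde\Sigma$, consists of pseudoidentities valid in~$T$; as $T\in\op\Sigma\cl$ was arbitrary, every $(u,v)\in\tilde\Sigma$ gives a pseudoidentity valid in~$\op\Sigma\cl$.

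Neither part presents a genuine obstacle---this is the easy ``soundness'' half---but the step requiring the most care is the interaction of topological closure with the algebraic structure: one must be sure that stability and full invariance, which are conditions quantified over \emph{all} terms $\mathbf t$, endomorphisms $\varphi$, and parameter tuples, survive closure. The key point, which I would state explicitly, is that each of the maps $\Om AU\to\Om AU$ of the form $w\mapsto\mathbf t(w,w_1,\dots,w_n)$ and each continuous endomorphism is \emph{continuous}, so the preimage of a closed relation is closed, and dually the closure of an invariant set is invariant; continuity of these maps is exactly the content of $\Om AU$ being a topological algebra and of the word ``continuous'' in ``continuous endomorphism''. I would also remark that the identical argument shows more generally that $\tilde\Sigma$ is contained in the kernel congruence of the natural map $\Om AU\to\Om A{\op\Sigma\cl}$, which is the formulation used later.
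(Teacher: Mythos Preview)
Your approach is essentially identical to the paper's: both proceed by transfinite induction showing that each $\Sigma_\alpha$ is stable, fully invariant, and consists of pseudoidentities valid in $\op\Sigma\cl$, and then collect these properties (together with closedness and transitivity, which hold by construction of the stabilised union) for~$\tilde{\Sigma}$.

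The one place where you hesitate unnecessarily is reflexivity. There is no need to adjoin the diagonal or to treat $\Sigma=\emptyset$ separately: $\Sigma_0$ is already reflexive whenever $\Sigma$ is nonempty, because for any $(u=v)\in\Sigma$ and any $w\in\Om AU$ one may take the term $\mathbf t(x_0,x_1)=x_1$, which ignores its first argument, to obtain the pair $(w,w)\in\Sigma_0$. The paper simply records at the outset that $\Sigma_0$ is reflexive and symmetric and moves on. (If $\Sigma=\emptyset$ then $\op\Sigma\cl=\pv U$ and both assertions are trivial.)
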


\begin{proof}
  Note that $\Sigma_0$ is a reflexive and symmetric binary relation
  on~\Om AU, whence so are all $\Sigma_\alpha$ as well as
  $\tilde{\Sigma}$. We prove by transfinite induction on~$\alpha$ that
  each $\Sigma_\alpha$ is a stable fully invariant binary relation
  on~\Om AU whose elements, viewed as pseudoidentities, are valid in the
  pseudovariety $\op\Sigma\cl$.

  For $\alpha=0$, the claimed properties are immediate from the
  definition of~$\Sigma_0$. Assuming the claim holds for all
  $\alpha<\beta$, then it clearly also holds for $\Sigma_\beta$ in
  case $\beta$~is a limit ordinal. Otherwise, $\beta$ is a successor
  ordinal, and we distinguish the cases where $\beta$ is odd or even.

  In case $\beta$ is odd, that is, it is of the form
  $\beta=2\gamma+1$, $\Sigma_\beta$~is the transitive closure
  of~$\Sigma_{2\gamma}$ which, by the induction hypothesis, is a
  stable fully invariant binary relation on~\Om AU whose elements
  define pseudoidentities valid in the pseudovariety $\op\Sigma\cl$.
  The elements of~$\Sigma_\beta$ are pairs $(u_0,u_n)$ such that there
  exist $u_1,\ldots,u_{n-1}$ with each
  $(u_i,u_{i+1})\in\Sigma_{2\gamma}$ ($i=0,\ldots,n-1$). In
  particular, each pseudoidentity $u_i=u_{i+1}$ is valid
  in~$\op\Sigma\cl$, whence so is~$u_0=u_n$. If $\mathbf{t}$ is a term
  and $w_1,\ldots,w_m$ are elements from $\Om AU$ then, by the
  induction hypothesis, each pair
  $\bigl(\mathbf{t}(u_i,w_1,\ldots,w_m),\
  \mathbf{t}(u_{i+1},w_1,\ldots,w_m)\bigr)$ belongs
  to~$\Sigma_{2\gamma}$, and so the pair
  $\bigl(\mathbf{t}(u_0,w_1,\ldots,w_m),
  \mathbf{t}(u_n,w_1,\ldots,w_m)\bigr)$ also belongs
  to~$\Sigma_\beta$. Finally, if $\varphi$ is a continuous
  endomorphism of~\Om AU, by the induction hypothesis each of the
  pairs $(\varphi(u_i),\varphi(u_{i+1}))$ belongs
  to~$\Sigma_{2\gamma}$, which entails that
  $(\varphi(u_0),\varphi(u_n))$~belongs to~$\Sigma_\beta$.

  Consider next the case where $\beta=2\gamma+2$, that is, a nonzero
  and non-limit even ordinal. Then, since \Om AU is a metric space,
  every element from~$\Sigma_\beta$ is the limit $(u,v)$ of a sequence
  $(u_n,v_n)_n$ of elements from~$\Sigma_{2\gamma+1}$. By the
  induction hypothesis, as a pseudoidentity, every element of the
  sequence is valid in~$\op\Sigma\cl$, whence so is $u=v$ since, under
  an evaluation of the elements of~$A$ in a finite algebra, the
  sequences $(u_n)_n$ and $(v_n)_n$ eventually stabilize, precisely at
  the values of $u$ and~$v$, respectively. Since
  $\Sigma_{2\gamma+1}$~is assumed to be stable, for a term
  $\mathbf{t}$ and $w_1,\ldots,w_m\in\Om AU$, the sequence of pairs
  $\bigl(\mathbf{t}(u_n,w_1,\ldots,w_m),
  \mathbf{t}(v_n,w_1,\ldots,w_m)\bigr)_n$ consists of elements
  of~$\Sigma_{2\gamma+1}$, whence its limit
  $\bigl(\mathbf{t}(u,w_1,\ldots,w_m),
  \mathbf{t}(v,w_1,\ldots,w_m)\bigr)$~belongs
  to~$\Sigma_\beta=\overline{\Sigma_{2\gamma+1}}$. Finally, if
  $\varphi$~is a continuous endomorphism of~\Om AU, then the elements
  of the sequence $(\varphi(u_n),\varphi(v_n))_n$ belong
  to~$\Sigma_{2\gamma+1}$ and, therefore, its limit
  $(\varphi(u),\varphi(v))$ belongs to~$\Sigma_\beta$. This completes
  the transfinite induction.

  In view of the already established initial claim, we know that
  $\tilde{\Sigma}$ is a reflexive symmetric stable fully invariant
  binary relation consisting of pairs $(u,v)$ such that the
  pseudoidentity $u=v$ is valid in~$\op\Sigma\cl$. To complete the
  proof, it remains to recall that we already observed that
  $\tilde{\Sigma}$ is a closed transitive binary relation.
\end{proof}

We call the elements of~$\tilde{\Sigma}$ the pseudoidentities
\emph{provable} from~$\Sigma$. A \emph{proof} consists in a
transfinite sequence of steps in which in step~0 we invoke
pseudoidentities from~$\Sigma$, suitably evaluated in~\Om AU, in which both
sides are plugged in the same place of an arbitrary term, and in later
steps we either use transitivity of equality or take limits, in the
latter two cases from already proved steps, or simply collect together
all pseudoidentities in previous steps. In the last step in the proof we
should have a set of pseudoidentities containing the one to be proved.

An alternative but equivalent, in the sense of capturing the same
proved pseudoidentities, definition of proof would be to take a
transfinite sequence of pseudoidentities in which in each step we allow
one of the pseudoidentities of the above step~0, we take $u=w$ if
there are two previous steps of the form $u=v$ and $v=w$, or we
take $u=v$ provided there is a sequence of earlier steps $(u_n=v_n)_n$
with $u=\lim u_n$ and $v=\lim v_n$. The latter steps are called
\emph{limiting steps}. The last step in such a proof should be the
pseudoidentity to be proved.

If such a proof only involves a finite number of non-limiting steps
and no limiting steps, then we say the proof is \emph{algebraic}. Note
that such a proof exists precisely for the pseudoidentities
of~$\Sigma_1$.

Several examples of proofs in the above general sense can be found in
the literature. In fact, all proofs that a pseudovariety defined by
certain pseudoidentities satisfies a given pseudoidentity that we have been
able to find in the literature seem to be expressible in this form.
This suggests the following general conjecture which amounts to
completeness of our proof scheme.

\begin{Conjecture}
  \label{cj:provable}
  A \pv U-pseudoidentity $u=v$ is provable from a set $\Sigma$ of \pv
  U-pseudoidentities whenever $\op\Sigma\cl$ satisfies $u=v$.
\end{Conjecture}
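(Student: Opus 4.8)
The plan is to reduce the conjecture to a residual‑finiteness property of $\tilde{\Sigma}$ and then to attack that property through reducibility, handling by separate arguments the pseudovarieties for which the needed reducibility is not available off the shelf. Soundness being already settled by Proposition~\ref{p:gen-provable-inequalities}, what must be shown is that $\tilde{\Sigma}$ contains the kernel $\rho$ of the canonical projection of $\Om AU$ onto the free pro-$\op\Sigma\cl$ algebra on~$A$; by soundness $\tilde{\Sigma}\subseteq\rho$ already, so the point is equality, i.e.\ that the compact algebra $\Om AU/\tilde{\Sigma}$ is residually~$\op\Sigma\cl$. Here there is one easy observation to record first: if $\psi\colon\Om AU\to F$ is a continuous surjection onto a finite algebra with $\tilde{\Sigma}\subseteq\ker\psi$, then $F\in\op\Sigma\cl$. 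Indeed $F\in\pv U$ (being a finite continuous image of a pro-\pv U algebra), and for a pseudoidentity $u=v$ in~$\Sigma$ with $u,v\in\Om BU$ and any continuous homomorphism $\eta\colon\Om BU\to F$ one lifts the generators of~$B$ through~$\psi$ to obtain a continuous homomorphism $\tilde\eta\colon\Om BU\to\Om AU$ with $\psi\circ\tilde\eta=\eta$; then $(\tilde\eta(u),\tilde\eta(v))\in\Sigma_0\subseteq\tilde{\Sigma}\subseteq\ker\psi$, so $\eta(u)=\eta(v)$. Consequently the conjecture is \emph{equivalent} to the statement that the closed congruence $\tilde{\Sigma}$ is residually finite, that is, that the compact quotient $\Om AU/\tilde{\Sigma}$ is profinite --- exactly the kind of property that can fail for a quotient of a profinite algebra by an arbitrary closed congruence, which is why the statement is only a conjecture.

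The general reduction I would use is the following. Fix an implicit signature $\sigma$ such that the variety $(\op\Sigma\cl)^\sigma$ of $\sigma$-algebras generated by $\op\Sigma\cl$ admits a basis $E$ of identities each of which is provable from~$\Sigma$ (so $E\subseteq\tilde{\Sigma}$), and assume $\op\Sigma\cl$ is $\sigma$-reducible for the equation $x=y$. Let $u,v\in\Om AU$ with $\op\Sigma\cl\models u=v$. By $\sigma$-reducibility, for every $n$ there are $\sigma$-words $u_n,v_n$ over~$A$ with $u_n\to u$ and $v_n\to v$ in $\Om AU$ and $\op\Sigma\cl\models u_n=v_n$; since $u_n,v_n$ are $\sigma$-terms and $(\op\Sigma\cl)^\sigma$ is generated by~$\op\Sigma\cl$, this gives $(\op\Sigma\cl)^\sigma\models u_n=v_n$, hence by completeness of ordinary equational logic $u_n=v_n$ is derivable from~$E$ by $\sigma$-equational logic. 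Every step of such a derivation is absorbed by the closure operations defining~$\tilde{\Sigma}$: transitivity by the transitive-closure steps; replacement in a context and substitution by the ``$\Sigma_0$'' step together with the full invariance of~$\tilde{\Sigma}$; and, being a closed congruence, $\tilde{\Sigma}$ respects the continuous $\sigma$-operations. As $E\subseteq\tilde{\Sigma}$, we conclude $(u_n,v_n)\in\tilde{\Sigma}$ for all~$n$, and since $\tilde{\Sigma}$ is topologically closed, $(u,v)=\lim_n(u_n,v_n)\in\tilde{\Sigma}$. Feeding in the various reducibility results available in the literature then yields the conjecture in a broad range of instances.

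The cases where the hypotheses of the reduction are not directly available are handled separately. When $\op\Sigma\cl$ generates a locally finite variety one runs the same argument with $\sigma$ the trivial signature of basic operations: $\Om AU/\rho$ is then finite, any valid pseudoidentity is a limit of finite-word pseudoidentities valid in $\op\Sigma\cl$, each of those is provable from a basis of the generated variety (hence from~$\Sigma$), and one closes up as before. For pseudovarieties of groups, of completely simple semigroups and of commutative monoids the required reducibility for $x=y$ is replaced by a direct argument whose main device is the exclusion of forbidden substructures: from a pseudoidentity $u=v$ not provable from~$\Sigma$ one builds a finite algebra satisfying~$\Sigma$ but refuting $u=v$, contradicting $\op\Sigma\cl\models u=v$; a necessary preliminary, carried out by the same excluded-structure technique, is that the relevant concrete (finite-word, or $\sigma$-word) pseudoidentities valid in $\op\Sigma\cl$ are provable from~$\Sigma$.

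The main obstacle, and the reason the conjecture remains open in general, is precisely the general case: one has no method of exhibiting, for an arbitrary~$\Sigma$, an implicit signature $\sigma$ for which $\op\Sigma\cl$ is $\sigma$-reducible for $x=y$ together with a $\Sigma$-provable basis of $(\op\Sigma\cl)^\sigma$ --- and, more intrinsically, no general way of showing that the transfinitely constructed closed congruence $\tilde{\Sigma}$ is residually finite. All the techniques above are case-by-case, and a uniform proof would seem to require genuinely new tools for controlling quotients of free profinite algebras by closed congruences built by alternating transitive and topological closure. That is where I expect the real difficulty to lie.
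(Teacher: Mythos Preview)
The statement is a \emph{conjecture}, and the paper does not prove it; it only gathers evidence in special cases. Your proposal correctly recognises this and does not claim a general proof, so there is nothing to compare against a nonexistent proof in the paper. What you have written is essentially a faithful summary of the paper's own strategy: the equivalence with profiniteness of $\Om AU/\tilde{\Sigma}$ is stated right after the conjecture; your ``general reduction'' via $\sigma$-reducibility for $x=y$ together with a $\Sigma$-provable basis of $(\op\Sigma\cl)^\sigma$ is precisely Proposition~\ref{p:general}; and the locally finite case is Section~\ref{sec:locfin}.

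One correction on how the group, completely simple, and commutative cases are actually handled. You describe these as cases where reducibility is ``replaced'' by an excluded-structures argument. In the paper they are treated in two steps that are then combined by the transfer result (Proposition~\ref{p:transfer-up}). First, the canonical bases $x^\omega=1$, $(xy)^\omega x=x$, and $xy=yx$ are shown to be h-strong \emph{via} $\kappa$-reducibility and explicit bases for $\pv V^\kappa$ (Theorems~\ref{t:G}, \ref{t:CS}, \ref{t:Com}), exactly along the lines of your general reduction. Second, and separately, the pseudovarieties \pv G, \pv{CS}, \pv{Com} are shown to be \emph{strong} (Theorems~\ref{t:conjecture-G}, \ref{t:conjecture-CS}, \ref{t:strong-Com}) by building, from a pair $(u,v)\notin\tilde{\Sigma}$, a finite separating algebra in $\op\Sigma\cl$; this uses the structure of congruences on the relevant free profinite objects, not excluded structures. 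The excluded-structures technique appears in Proposition~\ref{p:t-strong}, whose role is to supply the t-strongness hypothesis needed in the transfer. Your description blurs these three ingredients together.
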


The above statement is to be interpreted as a logical clause depending
on three parameters: \pv U, $\Sigma$, and $u=v$, where the latter two
determine the first one. Since we have been unable to establish the
conjecture in full generality, particular instances of the conjecture,
where one of the parameters is fixed may be of interest as they
provide evidence towards the conjecture. If a parameter is fixed then
those that are not determined by it are interpreted as being
universally quantified.

We say that a set $\Sigma$ of \pv U-pseudoidentities is \emph{h-strong
  (within~\pv U, or for~\pv U)} if the statement in the conjecture
holds for the given fixed choice of~$\Sigma$ and an arbitrary choice
of the \pv U-pseudoidentity $u=v$. In case $\Sigma=\{u=v\}$ consists
of a single pseudoidentity, we also say that $u=v$ is \emph{h-strong}
if so is~$\Sigma$.

A \pv U-pseudoidentity $u=v$ is said to be \emph{t-strong (within~\pv
  U, or for~\pv U)} if the statement in the conjecture holds for the
given fixed choice of~$u=v$ and an arbitrary choice
of~$\Sigma$.\footnote{The letters h and t in ``h/t-strong'' are meant
  to refer to whether the pseudoidentities appear as hypotheses or
  thesis in the proofs.}

The conjecture also involves an ambient pseudovariety \pv U with
respect to which pseudoidentities are taken. We say that the
pseudovariety \pv U is \emph{strong} if the statement in the
conjecture holds that is it holds for every set $\Sigma\cup\{u=v\}$ of
\pv U-pseudoidentities.

Taking into account the results of~\cite[Section~3.8]{Almeida:1994a},
showing that $\Sigma$ is h-strong is equivalent to showing that
$\tilde{\Sigma}$ is a profinite congruence in the sense
of~\cite[Page~139]{Rhodes&Steinberg:2009qt}, that is, for every finite
alphabet $A$, that $\tilde{\Sigma}$ is a closed congruence on~\Om AU
such that the quotient topological algebra $\Om AU/\tilde{\Sigma}$ is
a profinite algebra. A further equivalent formulation of this property
is that, given any two distinct $\tilde{\Sigma}$-classes, there is a
clopen union of $\tilde{\Sigma}$-classes separating them. The
difficulty in establishing this property in general is to obtain
$\tilde{\Sigma}$-saturation of such clopen sets. In case all
congruence classes are determined by a single class, as in the group
case, this program is much easier to achieve
(cf.~Section~\ref{sec:groups}).

We are able to prove the conjecture in several cases of interest. The
remainder of the paper is concerned with gathering evidence for the
conjecture.

\section{A transfer result}
\label{sec:transfer}

The purpose of this section is to show that it is possible to extend
the validity of the conjecture within a certain ambient pseudovariety
\pv V to a larger pseudovariety \pv U provided that \pv V is defined
within \pv U by a basis satisfying the conjecture. We first note that
the converse is also true without the additional assumption on a basis
of~\pv V.

\begin{Prop}
  \label{p:transfer-down}
  Let \pv U be a pseudovariety and let \pv V be a subpseudovariety
  of~\pv U. Suppose that the set $\Gamma$ of \pv U-pseudoidentities
  defines a subpseudovariety of~\pv V and $\Gamma$ is h-strong
  within~\pv U. Then the set
  $\Gamma'=\{\pi(u)=\pi(v):(u=v)\in\Gamma\}$ is h-strong
  within~\pv V, where $\pi$ stands for the natural continuous
  homomorphism $\Om AU\to\Om AV$.\footnote{Here, we should write
    $\pi_A$ instead of~$\pi$, since all finite sets of variables may
    need to be considered. Nevertheless, since the set $A$ will
    usually be clear from the context, most often we abuse notation by
    writing simply $\pi$ independently of the generating set~$A$.}
\end{Prop}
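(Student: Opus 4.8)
The plan is to show that every proof of $u=v$ from~$\Gamma$ within~\pv U, in the sense of Section~\ref{sec:scheme}, is transported by the homomorphism~$\pi$ to a proof of $\pi(u)=\pi(v)$ from~$\Gamma'$ within~\pv V, preceded by a routine semantic reduction. Fix a finite alphabet~$A$. First I would record the semantic observation that $\op\Gamma'\cl_\pv V=\op\Gamma\cl_\pv U$: since $\Gamma$ defines a subpseudovariety of~\pv V, every $T\in\op\Gamma\cl_\pv U$ already lies in~\pv V, and for such a~$T$ the interpretation of a pseudoword in~$\Om BU$ depends only on its image in~$\Om BV$, because the canonical homomorphism $\Om BU\to T$ factors through $\pi_B\colon\Om BU\to\Om BV$ (as $T$, being finite, is both pro-\pv U and pro-\pv V); hence $T$ satisfies $p=q$ if and only if it satisfies $\pi_B(p)=\pi_B(q)$. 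Now assume $\op\Gamma'\cl_\pv V$ satisfies a \pv V-pseudoidentity $u'=v'$ with $u',v'\in\Om AV$. Since $\pi=\pi_A\colon\Om AU\to\Om AV$ is onto (its image is a closed subalgebra of~$\Om AV$ containing a generating set, hence all of~$\Om AV$), choose $u,v\in\Om AU$ with $\pi(u)=u'$ and $\pi(v)=v'$. By the observation above, $\op\Gamma\cl_\pv U$ satisfies $u=v$, so, since $\Gamma$ is h-strong within~\pv U, we obtain $(u,v)\in\tilde\Gamma$, where $\tilde\Gamma$ denotes the relation on~$\Om AU$ produced by the proof scheme from~$\Gamma$.

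The heart of the argument is the transfer lemma $(\pi\times\pi)(\tilde\Gamma)\subseteq\widetilde{\Gamma'}$, where $\widetilde{\Gamma'}$ is the relation on~$\Om AV$ produced by the proof scheme from~$\Gamma'$. I would prove by transfinite induction on~$\alpha$ that $(p,q)\in\Gamma_\alpha$ implies $(\pi(p),\pi(q))\in\widetilde{\Gamma'}$. The odd-successor step uses transitivity of~$\widetilde{\Gamma'}$; the even-successor step uses that $\widetilde{\Gamma'}$ is topologically closed, together with the continuity of~$\pi$ and the metrizability of~$\Om AU$ (already exploited in the proof of Proposition~\ref{p:gen-provable-inequalities}), so that an element of~$\Gamma_{2\gamma+2}$, being a limit of a sequence from~$\Gamma_{2\gamma+1}$, has as image under~$\pi$ a limit of a sequence from~$\widetilde{\Gamma'}$; the limit-ordinal step is immediate. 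The base case $\alpha=0$ carries the real content: an element of~$\Gamma_0$ has the form $\bigl(\mathbf{t}(\varphi(p),w_1,\dots,w_n),\mathbf{t}(\varphi(q),w_1,\dots,w_n)\bigr)$ with $p=q$ or $q=p$ a pseudoidentity of~$\Gamma$, $p,q\in\Om BU$, $\varphi\colon\Om BU\to\Om AU$ a continuous homomorphism, $\mathbf{t}$ a term and $w_i\in\Om AU$, and applying the homomorphism~$\pi$ yields $\bigl(\mathbf{t}(\pi(\varphi(p)),\pi(w_1),\dots,\pi(w_n)),\mathbf{t}(\pi(\varphi(q)),\pi(w_1),\dots,\pi(w_n))\bigr)$. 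To recognize this pair as a member of~$\Gamma'_0$ I need a continuous homomorphism $\psi\colon\Om BV\to\Om AV$ with $\psi\circ\pi_B=\pi_A\circ\varphi$; such a~$\psi$ is obtained as the unique continuous extension to~$\Om BV$ of the assignment $b\mapsto\pi_A(\varphi(b))$ on the free generators~$B$, and the identity $\psi\circ\pi_B=\pi_A\circ\varphi$ holds because both sides are continuous homomorphisms $\Om BU\to\Om AV$ agreeing on the dense set of generators. Then $\pi(\varphi(p))=\psi(\pi_B(p))$ and $\pi(\varphi(q))=\psi(\pi_B(q))$, and since $\pi_B(p)=\pi_B(q)$ (or its reverse) is by definition a pseudoidentity of~$\Gamma'$, the image pair belongs to~$\Gamma'_0\subseteq\widetilde{\Gamma'}$.

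Combining the two parts, from $(u,v)\in\tilde\Gamma$ we get $(u',v')=(\pi(u),\pi(v))\in\widetilde{\Gamma'}$, i.e.\ $u'=v'$ is provable from~$\Gamma'$ within~\pv V, which is exactly the assertion that $\Gamma'$ is h-strong within~\pv V. The one place that deserves care, and which I expect to be the only real obstacle, is the bookkeeping in the base case when the generating alphabet changes: the construction of~$\psi$ and the verification that the square $\psi\circ\pi_B=\pi_A\circ\varphi$ commutes. Everything else is a direct transport of the defining clauses of the proof scheme along the continuous surjection~$\pi$, relying only on the structural properties of~$\tilde\Gamma$ and~$\widetilde{\Gamma'}$ furnished by Proposition~\ref{p:gen-provable-inequalities}.
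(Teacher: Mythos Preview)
Your proposal is correct and follows essentially the same route as the paper: lift the given \pv V-pseudoidentity along the surjection~$\pi$, invoke h-strongness of~$\Gamma$ within~\pv U, and then push the resulting proof forward through~$\pi$ by transfinite induction. The paper states the slightly sharper inclusion $(\pi\times\pi)(\Gamma_\alpha)\subseteq\Gamma'_\alpha$ rather than your $(\pi\times\pi)(\Gamma_\alpha)\subseteq\widetilde{\Gamma'}$, but your version is sufficient and in fact streamlines the successor steps since $\widetilde{\Gamma'}$ is already transitive and closed; your explicit construction of~$\psi$ in the base case is exactly the commuting-square argument the paper spells out (in the companion Proposition~\ref{p:transfer-up}) and leaves implicit here.
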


\begin{proof}
  Let $u,v\in\Om AU$ be such that the \pv V-pseudoidentity
  $\pi(u)=\pi(v)$ holds in the pseudovariety %
  $\op\Gamma'\cl_\pv V=\op\Gamma{\cl_\pv U}$. Then, the \pv
  U-pseudoidentity $u=v$ holds in the pseudovariety $\op\Gamma\cl_\pv
  U$. Since we assume that $\Gamma$ is h-strong within~\pv U, it
  follows that there is a proof of the pseudoidentity~$u=v$
  from~$\Gamma$. Projecting by $\pi$ into \Om AV all steps in such a
  proof, we obtain a proof of~$\pi(u)=\pi(v)$ from~$\Gamma'$; more
  precisely, one may easily prove by induction on the ordinal $\alpha$
  that $(\pi\times\pi)(\Gamma_\alpha)\subseteq\Gamma'_\alpha$. Hence,
  $\Gamma'$ is h-strong within~\pv V.
\end{proof}

Going in the opposite direction is more interesting, but requires an
additional assumption.

\begin{Prop}
  \label{p:transfer-up}
  Let \pv U be a pseudovariety and let \pv V be a subpseudovariety
  of~\pv U. Suppose that \pv V admits a basis $\Sigma$ of \pv
  U-pseudoidentities that is h-strong within~\pv U and consists of
  t-strong pseudoidentities. Let $\Gamma$ be a set of \pv
  U-pseudoidentities such that $\op\Gamma\cl_\pv U\subseteq\pv V$. If
  the set $\Gamma'=\{\pi(u)=\pi(v):(u=v)\in\Gamma\}$ is h-strong
  within~\pv V, where $\pi$ is as above, then the set $\Gamma$ is
  h-strong within~\pv U.
\end{Prop}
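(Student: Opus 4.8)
The plan is to fix a finite alphabet $A$ and a $\pv U$-pseudoidentity $u=v$ with $u,v\in\Om AU$ valid in $\op\Gamma\cl_\pv U$, and to prove directly that $(u,v)\in\tilde\Gamma$, all provability closures being computed over $A$. Write $\pi=\pi_A:\Om AU\to\Om AV$ for the natural continuous homomorphism; it is surjective, its image being a closed subalgebra containing the free generators, hence dense, hence everything. As already observed in the proof of Proposition~\ref{p:transfer-down}, $\op\Gamma'\cl_\pv V=\op\Gamma\cl_\pv U$, so the $\pv V$-pseudoidentity $\pi(u)=\pi(v)$ is valid in $\op\Gamma'\cl_\pv V$; since $\Gamma'$ is h-strong within~\pv V, this gives $(\pi(u),\pi(v))\in\tilde{\Gamma'}$ (closure computed in $\Om AV$). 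Thus the whole problem reduces to transferring a proof over $\Om AV$ back up to $\Om AU$, i.e.\ to showing
$$(\pi\times\pi)^{-1}(\tilde{\Gamma'})\subseteq\tilde\Gamma .$$

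The next thing I would record is that $\ker\pi\subseteq\tilde\Gamma$. First, $\ker\pi=\tilde\Sigma$ over $A$: the inclusion $\tilde\Sigma\subseteq\ker\pi$ is soundness (Proposition~\ref{p:gen-provable-inequalities}: elements of $\tilde\Sigma$ are valid in $\op\Sigma\cl=\pv V$, hence hold in the pro-\pv V algebra $\Om AV$), while the converse holds because $\pi(a)=\pi(b)$ is equivalent to validity of the pseudoidentity $a=b$ in $\pv V=\op\Sigma\cl$ (one direction applies freeness of $\Om AV$, the other evaluates in the pro-\pv V algebra $\Om AV$), and then h-strength of $\Sigma$ within~\pv U gives $(a,b)\in\tilde\Sigma$. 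Second, $\tilde\Sigma\subseteq\tilde\Gamma$: since $\op\Gamma\cl_\pv U\subseteq\pv V=\op\Sigma\cl$, every pseudoidentity of $\Sigma$ holds in $\op\Gamma\cl_\pv U$ and so, being t-strong, is provable from $\Gamma$; a routine transfinite induction — transforming a proof of some $(s,t)\in\Sigma$ over the alphabet $B$ of $s=t$ into a proof over $A$ by applying the continuous map $x\mapsto\mathbf t(\varphi(x),w_1,\dots,w_n)$ (which carries substitution instances to substitution instances because basic operations are respected and terms compose, and which carries transitive and topological closures into $\tilde\Gamma$ because $\tilde\Gamma$ is transitive and closed) — shows $\Sigma_0\subseteq\tilde\Gamma$, whence $\tilde\Sigma\subseteq\tilde\Gamma$. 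Combining the two facts gives $\ker\pi\subseteq\tilde\Gamma$; note that one cannot shortcut this by invoking h-strength of $\Gamma$ itself, which is exactly what we are proving, so both hypotheses on $\Sigma$ are genuinely needed here.

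The core step is the displayed inclusion, which I would prove by transfinite induction, showing $(\pi\times\pi)^{-1}(\Gamma'_\alpha)\subseteq\tilde\Gamma$ for every $\alpha$. For $\alpha=0$: given $a,b$ with $(\pi(a),\pi(b))\in\Gamma'_0$, fix a representation $\pi(a)=\mathbf t(\varphi'(\pi_B(p)),\bar{w'})$, $\pi(b)=\mathbf t(\varphi'(\pi_B(q)),\bar{w'})$ coming from $(p=q)\in\Gamma$ with $p,q\in\Om BU$; lift $\varphi':\Om BV\to\Om AV$ to a continuous homomorphism $\varphi:\Om BU\to\Om AU$ with $\pi\circ\varphi=\varphi'\circ\pi_B$ (define it on free generators using surjectivity of $\pi$, then extend) and lift the parameters $\bar{w'}$ to $\bar w$ with $\pi(\bar w)=\bar{w'}$. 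Then $c=\mathbf t(\varphi(p),\bar w)$ and $d=\mathbf t(\varphi(q),\bar w)$ form an element of $\Gamma_0$ with $\pi(c)=\pi(a)$, $\pi(d)=\pi(b)$, so $(a,c),(d,b)\in\ker\pi\subseteq\tilde\Gamma$ and $(c,d)\in\tilde\Gamma$, whence $(a,b)\in\tilde\Gamma$ by transitivity. The odd-successor (transitive closure) step is immediate: lift an interpolating chain termwise along $\pi$ and combine the inductive hypothesis with transitivity of $\tilde\Gamma$. For the even-successor (topological closure) step, given $(\pi(a),\pi(b))=\lim_k(p_k,q_k)$ with $(p_k,q_k)\in\Gamma'_{2\gamma+1}$, choose arbitrary preimages $(\tilde p_k,\tilde q_k)$, pass by compactness of $\Om AU$ to a subsequence converging to some $(a',b')$, note that continuity of $\pi$ forces $\pi(a')=\pi(a)$ and $\pi(b')=\pi(b)$, that the inductive hypothesis puts each $(\tilde p_k,\tilde q_k)$ in $\tilde\Gamma$ and closedness puts $(a',b')$ there, and that $(a,a'),(b',b)\in\ker\pi\subseteq\tilde\Gamma$, so $(a,b)\in\tilde\Gamma$ by transitivity. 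Limit ordinals are trivial. Applying the conclusion to $(\pi(u),\pi(v))\in\tilde{\Gamma'}$ yields $(u,v)\in\tilde\Gamma$, i.e.\ $\Gamma$ is h-strong within~\pv U.

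The main obstacle is exactly the topological-closure step: since $\pi$ has in general no continuous section, a convergent sequence of provable $\pv V$-pseudoidentities cannot be lifted to a convergent sequence upstairs, and the remedy is to lift arbitrarily, extract a convergent subsequence by compactness, and absorb the resulting discrepancy using $\ker\pi\subseteq\tilde\Gamma$ — which is why that preliminary fact, and hence both assumptions on $\Sigma$, are indispensable. A secondary technical point worth care is that $\Om AU$, $\Om AV$ and all the $\Om BU$ with $B$ finite are metrizable (finite alphabets, finite signature), so that ``topological closure'' may be read throughout as ``sequential closure''; this is what legitimizes both the handling of the $\Gamma'_\alpha$ and the compactness–subsequence argument above.
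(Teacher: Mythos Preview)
Your proof is correct and follows essentially the same route as the paper's: reduce to lifting a proof from $\Om AV$ to $\Om AU$ by transfinite induction on the $\Gamma'_\alpha$, lift homomorphisms and parameters at the base step, and handle the topological-closure step by lifting arbitrarily and extracting a convergent subsequence by compactness. The only cosmetic difference is that you prove the stronger statement $(\pi\times\pi)^{-1}(\Gamma'_\alpha)\subseteq\tilde\Gamma$ and isolate $\ker\pi\subseteq\tilde\Gamma$ as an explicit preliminary lemma, whereas the paper proves the weaker ``some lift is $\Gamma$-provable'' and invokes the $\ker\pi$ fact both at the end and inside the transitive-closure step; the two formulations are interchangeable once $\ker\pi\subseteq\tilde\Gamma$ is known.
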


\begin{proof}
  Let $u=v$ be a \pv U-pseudoidentity valid in the pseudovariety
  $\op\Gamma\cl_\pv U$. This means that the \pv V-pseudoidentity
  $\pi(u)=\pi(v)$ holds in~$\op\Gamma'\cl_\pv V$ as $\op\Gamma\cl_\pv
  U\subseteq\pv V=\op\Sigma\cl_\pv U$. Since $\Gamma'$ is h-strong
  within~\pv V, $\pi(u)=\pi(v)$ is provable from~$\Gamma'$. We need to
  show that $u=v$ is also provable from~$\Gamma$. We first claim that
  there are $u'$ and $v'$ such that $u'=v'$ is provable from~$\Gamma$,
  $\pi(u')=\pi(u)$, and $\pi(v')=\pi(v)$. Before proving the claim, we
  show how it allows us to conclude the proof of the proposition.
  Since $\Sigma$ is h-strong within~\pv U, it follows that~$\Sigma$
  proves $u'=u$ and $v'=v$. On the other hand, since the pseudovariety
  $\op\Gamma\cl$ is contained in~$\pv V$, it satisfies all
  pseudoidentities from~$\Sigma$ and, as these are assumed to be
  t-strong, they are provable from~$\Gamma$. Hence, $\Gamma$ proves
  $u'=u$, $v'=v$ and, assuming the claim, also $u'=v'$, which entails
  that $\Gamma$ proves $u=v$. Thus, it remains to establish the claim.

  Consider the sets $\Gamma'_\alpha$ defined in
  Section~\ref{sec:scheme}. The proof will be complete once we
  establish the above claim that every pseudoidentity
  in~$\Gamma'_\alpha$ is of the form $\pi(w)=\pi(z)$ for some
  pseudoidentity $w=z$ provable from~$\Gamma$. To prove the claim, we
  proceed by transfinite induction on~$\alpha$.

  In case $\alpha=0$, we have a pseudoidentity of the form
  \begin{equation}
    \label{eq:transfer-1}
    \mathbf{t}\bigl(\varphi(\pi(w)),\pi(s_1),\ldots,\pi(s_n)\bigr)
    =\mathbf{t}\bigl(\varphi(\pi(z)),\pi(s_1),\ldots,\pi(s_n)\bigr),
  \end{equation}
  where $\mathbf{t}$ is a term, $w=z$ is a pseudoidentity
  from~$\Gamma$, say over the set of variables $B$, $\varphi:\Om
  BV\to\Om AV$ is a continuous homomorphism, and $s_1,\ldots,s_n\in\Om
  AU$. By the universal property of relatively free profinite
  algebras, there is a continuous homomorphism $\psi:\Om BU\to\Om AU$
  such that the following diagram commutes:
  $$\xymatrix{
    \Om BU
    \ar[r]^\psi
    \ar[d]^{\pi_B}
    &
    \Om AU
    \ar[d]^{\pi_A}
    \\
    \Om BV
    \ar[r]^\varphi
    &
    \Om AV
  }$$
  It follows that the pseudoidentity~\eqref{eq:transfer-1} is obtained
  from
  $$\mathbf{t}(\psi(w),s_1,\ldots,s_n)
  =\mathbf{t}(\psi(z),s_1,\ldots,s_n)$$
  by applying $\pi$ to each member, which completes the basic step
  $\alpha=0$ of the induction.

  Suppose next that $\alpha=2\beta+1$ and that the pseudoidentities
  $w_i=w_{i+1}$ ($i=0,\ldots,k-1$) belong to~$\Gamma'_{2\beta}$. By
  induction hypothesis, there exist $w_i'',w_{i+1}'\in\Om AU$ such
  that the pseudoidentity $w_i''=w_{i+1}'$ is provable from~$\Gamma$,
  $\pi(w_i')=w_i$, and $\pi(w_{i+1}'')=w_{i+1}$. In particular, we
  have $\pi(w_i')=w_i=\pi(w_i'')$ for $i=1,\ldots,k-1$, and so each
  pseudoidentity $w_i'=w_i''$ is also provable from~$\Gamma$. Hence,
  $w_0=w_k$ is provable from~$\Gamma$.

  For the case $\alpha=2\beta+2$, consider a sequence $(w_n=z_n)_n$ of
  \pv U-pseudoidenti\-ties such that each $\pi(w_n)=\pi(z_n)$ belongs
  to $\Gamma'_{2\beta+1}$ and suppose that $(\pi(w_n)=\pi(z_n))_n$
  converges to $\pi(w)=\pi(z)$. By compactness and continuity
  of~$\pi$, we may as well assume that $(w_n=z_n)_n$ converges
  to~$w=z$. Since each pseudoidentity $w_n=z_n$ is provable
  from~$\Gamma$ by induction hypothesis, $w=z$~is also provable
  from~$\Gamma$.

  Since the case of limit ordinals is trivial, the transfinite
  induction is complete and the claim is established.
\end{proof}

With essentially the same arguments, one may replace the t-strongness
hypothesis by provability of $\Sigma$ from~$\Gamma$, which yields the
following corollary.

\begin{Cor}
  \label{c:transfer-up}
  Let \pv U be a pseudovariety and let \pv V be a subpseudovariety
  of~\pv U. Suppose that \pv V admits a basis of \pv
  U-pseudoidentities $\Sigma$ that is h-strong within~\pv U and that
  \pv V is strong. Then every set of \pv U-pseudoidentities from which
  $\Sigma$ is provable is h-strong within~\pv U.\qed
\end{Cor}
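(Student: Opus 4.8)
The plan is to revisit the proof of Proposition~\ref{p:transfer-up}
and observe that the only place where the t-strongness of the
pseudoidentities in~$\Sigma$ was used can be replaced by the weaker
hypothesis that $\Sigma$ is provable from~$\Gamma$. Concretely, in
that proof, after establishing the claim that there are $u'$ and $v'$
with $u'=v'$ provable from~$\Gamma$, $\pi(u')=\pi(u)$, and
$\pi(v')=\pi(v)$, one argues that $\Gamma$ proves $u'=u$ and $v'=v$ by
combining (i)~$\Sigma$ proves $u'=u$ and $v'=v$, which follows from
h-strongness of~$\Sigma$ within~\pv U together with
$\op\Gamma\cl\subseteq\pv V=\op\Sigma\cl$; and (ii)~$\Sigma$ is
provable from~$\Gamma$, so that anything provable from~$\Sigma$ is
provable from~$\Gamma$. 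Thus I would first note that the hypothesis
``\pv V is strong'' gives exactly the h-strongness of~$\Sigma$
within~\pv U that is needed (a strong pseudovariety has every basis
h-strong), and then carry out the modified conclusion.

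The key steps, in order, are as follows. First, let $\Gamma$ be a set
of \pv U-pseudoidentities from which $\Sigma$ is provable; since
$\op\Sigma\cl=\pv V$ and $\Sigma$ is provable from~$\Gamma$, soundness
(Proposition~\ref{p:gen-provable-inequalities}) gives
$\op\Gamma\cl\subseteq\op\Sigma\cl=\pv V$, so $\Gamma$ is a set of the
kind considered in Proposition~\ref{p:transfer-up}. Second, form
$\Gamma'=\{\pi(u)=\pi(v):(u=v)\in\Gamma\}$ and observe that, since
$\Sigma$ is h-strong within~\pv U and \pv V is strong, the hypotheses
of Proposition~\ref{p:transfer-up} are met except that we only know
$\Sigma$ is provable from~$\Gamma$ rather than that its elements are
t-strong. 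Third, run verbatim the argument of
Proposition~\ref{p:transfer-up}: the transfinite-induction claim that
every pseudoidentity in~$\Gamma'_\alpha$ is of the form
$\pi(w)=\pi(z)$ for some $w=z$ provable from~$\Gamma$ does not use
t-strongness at all, so it still applies and yields, for a \pv
U-pseudoidentity $u=v$ valid in~$\op\Gamma\cl$, pseudowords $u',v'$
with $u'=v'$ provable from~$\Gamma$ and $\pi(u')=\pi(u)$,
$\pi(v')=\pi(v)$. Fourth, since $\Gamma$ (being contained, up to
provability, in a basis of~\pv V, or directly because
$\op\Gamma\cl\subseteq\pv V$) together with h-strongness of~$\Sigma$
within~\pv U shows that $\Sigma$ proves $u'=u$ and $v'=v$, and since
$\Sigma$ is provable from~$\Gamma$, we conclude that $\Gamma$ proves
$u'=u$ and $v'=v$; combined with $\Gamma$ proving $u'=v'$, transitivity
(available in~$\tilde\Gamma$) gives that $\Gamma$ proves $u=v$.

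The only subtle point, and the step I would be most careful about, is
verifying that replacing ``these are provable from~$\Gamma$ because
they are t-strong and $\op\Gamma\cl$ satisfies them'' by ``these are
provable from~$\Gamma$ by hypothesis'' really does leave the rest of
the argument intact — in particular that the provability of~$\Sigma$
from~$\Gamma$ is used only through the implication ``$X$ provable
from~$\Sigma$ $\Rightarrow$ $X$ provable from~$\Gamma$'', which is
immediate from the monotonicity and transitive-closure structure of
the $\Sigma_\alpha$ construction (one checks $\tilde\Sigma\subseteq
\tilde\Gamma$ when $\Sigma\subseteq\tilde\Gamma$, by a routine
transfinite induction showing $\Sigma_\alpha\subseteq\tilde\Gamma$ for
all~$\alpha$, using that $\tilde\Gamma$ is a fully invariant closed
congruence). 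Since $\tilde\Gamma$ is fully invariant, closed, and
transitive by Proposition~\ref{p:gen-provable-inequalities}, this
inclusion holds, and the proof goes through word for word otherwise.
As the text already says ``with essentially the same arguments'', the
write-up can be brief: restate the reduction to
Proposition~\ref{p:transfer-up}, point to the one substitution in its
proof, and invoke the monotonicity remark above; there is no genuine
obstacle, only bookkeeping.
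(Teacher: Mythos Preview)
Your approach is correct and is exactly what the paper intends: rerun the proof of Proposition~\ref{p:transfer-up} with the single modification that, instead of invoking t-strongness of the elements of~$\Sigma$ to conclude that $\Gamma$ proves~$\Sigma$, you use directly the hypothesis that $\Sigma$ is provable from~$\Gamma$.

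One slip to fix: the hypothesis ``\pv V is strong'' does \emph{not} give h-strongness of~$\Sigma$ within~\pv U, contrary to what you say in your first paragraph; that h-strongness is already an explicit, separate hypothesis of the corollary. The role of ``\pv V is strong'' is instead to ensure that the projected set $\Gamma'$ is h-strong within~\pv V (since strongness of~\pv V means every set of \pv V-pseudoidentities is h-strong within~\pv V), which is exactly the hypothesis on~$\Gamma'$ needed to invoke Proposition~\ref{p:transfer-up}. With that correction, your key-steps paragraph is accurate and matches the paper's intended argument.
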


\section{Locally finite sets of identities}
\label{sec:locfin}

Recall that a variety is \emph{locally finite} if all its finitely
generated algebras are finite. We also say that a set $\Sigma$ of
identities is \emph{locally finite} if it defines a locally finite
variety.

\begin{Thm}
  \label{t:locfin-identity}
  Every locally finite set of identities is h-strong.
\end{Thm}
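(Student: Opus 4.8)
Fix a finite alphabet $A$, write $\pv W=\op\Sigma\cl$, and let $\rho\colon\Om AU\to\Om AW$ be the canonical continuous projection onto the relatively free pro-$\pv W$ algebra. Since a \pv U-pseudoidentity $u=v$ holds in $\pv W$ precisely when $\rho(u)=\rho(v)$, proving the theorem amounts to showing $\tilde\Sigma=\ker\rho$. The inclusion $\tilde\Sigma\subseteq\ker\rho$ is the soundness statement of Proposition~\ref{p:gen-provable-inequalities}, so everything hinges on the reverse inclusion, which I plan to obtain from one structural fact: $B:=\Om AU/\tilde\Sigma$ is a finite member of~$\pv W$.

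For finiteness of $B$, note first that $\tilde\Sigma\supseteq\Sigma_0$ contains every substitution instance of a pseudoidentity of~$\Sigma$, so $B$ satisfies~$\Sigma$. As $\Sigma$ is a set of identities defining the locally finite variety~\Cl V, the free algebra $F_{\Cl V}(A)$ of~\Cl V over~$A$ is finite, and every algebra that satisfies~$\Sigma$ and is generated by the image of the finite set~$A$ is a homomorphic image of~$F_{\Cl V}(A)$. Because $\tilde\Sigma$ is closed, $B$ is a compact Hausdorff topological algebra, and the image of~$\om AU$ in~$B$ is a dense subalgebra generated by the image of~$A$; by the previous sentence this subalgebra is finite, hence closed, hence equal to~$B$, so $B$ is finite. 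Next, $B$ is a finite continuous homomorphic image of the pro-\pv U algebra $\Om AU=\varprojlim\{A\text{-generated members of }\pv U\}$; such a map factors as a surjective homomorphism through the projection of $\Om AU$ onto one of the (finite) algebras $S$ of that inverse system, so $B$ is a quotient of $S\in\pv U$, whence $B\in\pv U$. Together with $B\models\Sigma$ this gives $B\in\pv W$.

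To finish, observe that $F_{\Cl V}(A)$ being finite forces $\Om AW$ to be finite too — every $A$-generated member of~$\pv W$ is a quotient of $F_{\Cl V}(A)$, so there are only finitely many up to isomorphism and $\Om AW$, their inverse limit, is a closed subalgebra of a finite product of members of~$\pv W$ — and hence $\Om AW$ is the largest $A$-generated member of~$\pv W$. Since $B$ is an $A$-generated member of~$\pv W$, there is a continuous surjective homomorphism $h\colon\Om AW\to B$ respecting the generators; then $h\circ\rho$ and the quotient map $\Om AU\to B$ are continuous homomorphisms agreeing on the image of~$A$, hence equal, so $\ker\rho\subseteq\ker(h\circ\rho)=\tilde\Sigma$, as required. (Alternatively, once $B$ is known to be finite one may simply invoke the discussion following Conjecture~\ref{cj:provable}, by which $\Sigma$ is h-strong as soon as each $\Om AU/\tilde\Sigma$ is a profinite algebra.)

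The step I expect to be the real obstacle is the finiteness of~$B$: the tempting but painful route is to unwind the transfinite sequence $(\Sigma_\alpha)$ stage by stage, which moreover would force one to confront the genuine discrepancy between $\mathrm{var}(\pv W)$ and $\mathrm{var}(\pv U)\cap\Cl V$. The point of the plan above is that no inspection of individual stages is needed: it is enough that $\tilde\Sigma$ is \emph{some} closed congruence containing~$\Sigma_0$, and local finiteness of~\Cl V then collapses the quotient to a finite algebra via the topological-generation argument. A minor auxiliary fact to nail down is that a finite continuous homomorphic image of a pro-\pv U algebra again lies in~\pv U.
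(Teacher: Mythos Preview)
Your proof is correct and takes a genuinely different route from the paper's.

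The paper argues constructively at the level of individual pseudoidentities: it picks a transversal $f$ of the finite quotient $\Om A{\op\Sigma\cl}$ inside $\om AU$, observes that for $u\in\om AU$ the identity $u=f(\pi(u))$ holds in the variety $[\Sigma]$ (local finiteness is what makes validity in $\op\Sigma\cl$ propagate to $[\Sigma]$), and then invokes the completeness theorem for equational logic to get that identity into~$\Sigma_1$. An arbitrary $u=v$ valid in~$\op\Sigma\cl$ is then approximated by identities $u_n=v_n$ in $\om AU$ with constant $\pi$-image, each of which is in~$\Sigma_1$ via the transversal trick, so $u=v\in\Sigma_2$. In particular the paper's argument yields the quantitative by-product $\tilde\Sigma=\Sigma_2$.

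Your approach is structural rather than syntactic: you bypass equational completeness entirely by showing directly that $B=\Om AU/\tilde\Sigma$ is finite and in~$\pv W$, which plugs straight into the profiniteness criterion mentioned after Conjecture~\ref{cj:provable}. The key device---the dense subalgebra generated by~$A$ is a quotient of the finite $F_{\Cl V}(A)$, hence closed, hence everything---is clean and reusable. What you lose is the explicit bound on the ordinal stage; what you gain is a shorter argument that highlights exactly why local finiteness does the work, namely it collapses the quotient to a finite object without ever looking at the proof scheme stage by stage. The auxiliary fact that a finite continuous quotient of a pro-\pv U algebra lies in~\pv U is indeed standard (the clopen kernel must contain the kernel of some projection in the inverse system), and your sketch of it is adequate.
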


\begin{proof}
  Consider a set $\Sigma$ of identities that is locally finite. Let
  $A$ be a finite set and let $\pi:\Om AU\to\Om A{}\op\Sigma\cl$ be
  the natural continuous homomorphism. For each of the finitely many
  elements $s$ of $\Om A{}\op\Sigma\cl$ choose an element $f(s)$
  of~\om AU such that $\pi(f(s))=s$. Let $\tilde{f}=f\circ\pi$ and
  note that, since $\pi$ is continuous, so is $\tilde{f}$.

  For $u\in\om AU$, consider the identity $u=\tilde{f}(u)$. Since
  $\pi(u)=\pi\circ f\circ\pi(u)=\pi(\tilde{f}(u))$, it
  is valid in the pseudovariety $\op\Sigma\cl$. As the basis
  $\Sigma$ is locally finite, it is also valid in the variety
  $[\Sigma]$. By the completeness theorem for equational logic, it
  follows that the identity $u=\tilde{f}(u)$ is algebraically provable
  from~$\Sigma$.

  Finally, consider an arbitrary pseudoidentity $u=v$ with $u,v\in\Om
  AU$ and suppose that it is valid in~$\op\Sigma\cl$, that is,
  $\pi(u)=\pi(v)$. Let $(u_n)_n$ and $(v_n)_n$ be sequences
  in~\om AU converging respectively to~$u$ and~$v$. Since $\pi$~is
  continuous and $\Om A{}\op\Sigma\cl$ is a discrete space, we may as
  well assume that the sequences $(\pi(u_n))_n$ and
  $(\pi(v_n))_n$ are constant. It follows that each of the
  identities $u_n=v_n$ is valid in~$\op\Sigma\cl$, whence the equality
  $\tilde{f}(u_n)=\tilde{f}(v_n)$ holds. By the preceding paragraph,
  we deduce that the identities $u_n=\tilde{f}(u_n)=v_n$ are
  algebraically provable from~$\Sigma$. Hence, the pseudoidentity
  $u=v$ is, sidewise, the limit of the sequence $(u_n=v_n)_n$ of
  algebraically provable pseudoidentities, which shows that $u=v$ is
  provable from~$\Sigma$, thereby establishing that $\Sigma$ is h-strong.
\end{proof}

For a locally finite set of identities, we may take any basis of a
variety generated by a single finite algebra. A classical example of
locally finite identity for semigroups which is not of this type is
$x^2=x$ (see, for instance, \cite{Howie:1976}).

Along the same lines of the proof of Theorem~\ref{t:locfin-identity}, we may
prove the following result.

\begin{Thm}
  \label{t:locfin-pseudovariety}
  Every locally finite set of identities defines a strong
  pseudovariety.
\end{Thm}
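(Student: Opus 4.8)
The plan is to imitate the proof of Theorem~\ref{t:locfin-identity}, but now we must deal with an arbitrary set $\Gamma$ of $\op\Sigma\cl$-pseudoidentities playing the role of hypotheses, rather than $\Sigma$ itself; recall that ``strong'' means every set $\Gamma\cup\{u=v\}$ of $\op\Sigma\cl$-pseudoidentities satisfying the semantic implication also satisfies the syntactic one, where now the ambient relatively free profinite algebra is $\Om A{}\op\Sigma\cl$. So fix the pseudovariety $\pv V=\op\Sigma\cl$, a finite alphabet $A$, and write $\om AV$ for the (finite, by local finiteness) free algebra in the variety generated by~$\pv V$; note $\Om AV=\widehat{\om AV}$ has $\om AV$ as a dense subalgebra. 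The key structural fact I would isolate first is that, because $\om AV$ is finite and dense in $\Om AV$, every $w\in\Om AV$ is the limit of a sequence of elements of $\om AV$, and under any continuous homomorphism into a finite algebra such a sequence eventually stabilizes at the value of~$w$.

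First I would set up, exactly as before, a continuous retraction-like map: for each $s\in\om AV$ there is nothing to choose, but the point is that for a pseudoword $w\in\Om AV$ we approximate by finite words. Let $\Gamma$ be a set of $\pv V$-pseudoidentities and let $u=v$ be a $\pv V$-pseudoidentity valid in $\op\Gamma\cl_{\pv V}$; I must show $u=v$ is provable from $\Gamma$ (inside $\Om AV$). Choose sequences $(u_n)_n,(v_n)_n$ in $\om AV$ with $u_n\to u$, $v_n\to v$. The crucial observation is that each $u_n$ lies in the \emph{finite} algebra $\om AV$, so the congruence on $\om AV$ generated by (the finite-word instances derivable from) $\Gamma$ has finitely many classes; I would argue that because $\op\Gamma\cl_{\pv V}$ satisfies $u=v$, and because evaluation of pseudowords into any $T\in\op\Gamma\cl_{\pv V}$ factors through $\om AV$ up to the eventual-stabilization of the approximating sequences, we get that $u_n$ and $v_n$ lie in the same class of the fully invariant congruence on $\om AV$ determined by $\Gamma$ for all large~$n$. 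By the completeness theorem for equational logic applied in the locally finite variety generated by $\pv V$ (and using that $\Gamma$, read as identities on $\om AV$, together with a finite basis $\Sigma$ of the variety, proves exactly this congruence), each identity $u_n=v_n$ is algebraically provable from $\Gamma$ together with $\Sigma$; and since $\Sigma$ itself is h-strong by Theorem~\ref{t:locfin-identity} — hence provable trivially from the empty set, or rather its instances are among $\Sigma_0$ — those steps are already available in our scheme. Finally $u=v$ is sidewise the limit of the provable sequence $(u_n=v_n)_n$, so a single limiting step yields $u=v$ provable from $\Gamma$, establishing strongness.

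The main obstacle I anticipate is the bookkeeping at the juncture where pseudowords must be replaced by finite words: one needs that whenever $\op\Gamma\cl_{\pv V}$ satisfies the pseudoidentity $u_n=v_n$ (with $u_n,v_n\in\om AV$), the \emph{finitary} consequence relation — classical equational deduction from $\Gamma$ restricted to word-instances, inside the locally finite variety $\mathrm{Var}(\pv V)$ — actually derives $u_n=v_n$. This requires knowing that the variety $\mathrm{Var}(\op\Gamma\cl_{\pv V})$, as a subvariety of the locally finite variety $\mathrm{Var}(\pv V)$, is axiomatized over $\pv V$ by the identities $\{u_n=v_n : n, u_n,v_n\in\om AV, \op\Gamma\cl_{\pv V}\models u_n=v_n\}$, which in turn are classical consequences of $\Gamma$; this is where local finiteness is essential, since it forces the profinite congruence $\tilde\Gamma$ on $\Om AV$ to be the closure of its restriction to the finite dense subalgebra $\om AV$, and that restriction is an ordinary fully invariant congruence to which Birkhoff-style completeness applies. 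Once that identification is in hand, the argument collapses to the limiting step as above, mirroring the previous theorem; I would also remark that one should verify $\Gamma$'s own instances generate, via $\Sigma_0$ and transitive closure, precisely the needed finitary congruence, so that no appeal to $\Sigma$ beyond its role in defining $\pv V$ is actually required.
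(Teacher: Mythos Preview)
Your overall strategy---reduce to classical equational completeness and then take a limit---is the right idea, and is exactly what the paper does. But you have overlooked the one observation that makes the argument a three-line proof rather than a page: since $\om AV$ is finite (you say this yourself), its profinite completion is itself, so $\Om AV=\om AV$. There is no approximation to carry out; every $\pv V$-pseudoword \emph{is} a finite word, every $\pv V$-pseudoidentity \emph{is} an identity, and your sequences $(u_n)_n$, $(v_n)_n$ may be taken constant and equal to $u,v$ from the start. The paper's proof accordingly consists of just: $u,v\in\om AV=\Om AV$; the locally finite variety $\Cl V$ is generated by its finite members, so the subvariety $[\Gamma]$ of~$\Cl V$ is generated by $\op\Gamma\cl_{\pv V}$; hence $[\Gamma]$ satisfies $u=v$; now classical equational completeness yields an algebraic proof of $u=v$ from~$\Gamma$. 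No limiting step is needed at all.

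There is also a genuine muddle in your handling of~$\Sigma$. You write that $u_n=v_n$ is provable ``from $\Gamma$ together with $\Sigma$'' and then try to discharge $\Sigma$ by invoking that $\Sigma$ is h-strong, saying its instances are ``among $\Sigma_0$''. This conflates two different proof schemes. In the definition of ``\pv V is strong'' the ambient pseudovariety is $\pv V$ itself, and the scheme runs inside $\Om AV$; here the relation being built up is $\Gamma_0\subseteq\Gamma_1\subseteq\cdots$, and $\Sigma$ does not appear. The reason $\Sigma$ is irrelevant is simply that every instance of an identity from~$\Sigma$ becomes a trivial equality in $\Om AV=\om AV$ (since $\om AV$ lies in the variety defined by~$\Sigma$), so when a classical equational derivation from $\Gamma\cup\Sigma$ in the absolutely free term algebra is projected to $\om AV$, the $\Sigma$-steps collapse to reflexivity and what remains is an algebraic proof from~$\Gamma$ alone. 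That is the correct justification; h-strongness of $\Sigma$ (Theorem~\ref{t:locfin-identity}) plays no role here.
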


\begin{proof}
  Let \pv V be the pseudovariety defined by a locally finite set
  $\Sigma$ of identities and let \Cl V be the variety defined
  by~$\Sigma$. Let $\Gamma$ be a set of \pv V-(pseudo)identities and
  let $u,v\in\om AV=\Om AV$ be such that the identity $u=v$ is valid
  in~$\op\Gamma\cl$. Since \Cl V~is generated by~\pv V, whence also
  $[\Gamma]$ is generated by~$\op\Gamma\cl$, it follows that the
  variety $[\Gamma]$ also satisfies the identity $u=v$. By the
  completeness theorem for equational logic, we deduce that $u=v$ is
  provable from~$\Gamma$.
\end{proof}

Combining Theorem~\ref{t:locfin-identity}
and~\ref{t:locfin-pseudovariety} with Corollary~\ref{c:transfer-up},
we obtain the following result.

\begin{Cor}
  \label{c:locfin}
  Every set of pseudoidentities which proves a locally finite set of
  identities is h-strong.\qed
\end{Cor}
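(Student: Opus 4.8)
The plan is to read off the statement from Corollary~\ref{c:transfer-up} by taking, as the intermediate basis~$\Sigma$ in that corollary, the locally finite set of identities that the given set is assumed to prove. So, fix the pseudovariety~$\pv U$ in which the pseudoidentities of the given set $\Gamma$ live, and let $\Sigma$ be a locally finite set of identities such that every member of~$\Sigma$ is provable from~$\Gamma$. Put $\pv V=\op\Sigma\cl_\pv U$, a subpseudovariety of~$\pv U$ having $\Sigma$ as a basis of $\pv U$-pseudoidentities (here one uses that a set of identities, being given by terms, is automatically a set of $\pv U$-pseudoidentities).

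Next I would check the two running hypotheses of Corollary~\ref{c:transfer-up} for this $\Sigma$ and~$\pv V$. That $\Sigma$ is h-strong within~$\pv U$ is precisely Theorem~\ref{t:locfin-identity} applied to the locally finite set~$\Sigma$; that $\pv V$ is strong is precisely Theorem~\ref{t:locfin-pseudovariety}, since $\pv V$ is the pseudovariety defined by the locally finite set of identities~$\Sigma$. With both hypotheses in hand, Corollary~\ref{c:transfer-up} tells us that every set of $\pv U$-pseudoidentities from which $\Sigma$ is provable is h-strong within~$\pv U$. As $\Gamma$ proves every identity of~$\Sigma$ by assumption, it follows that $\Gamma$ is h-strong, which is the assertion. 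It is worth recording that by soundness, Proposition~\ref{p:gen-provable-inequalities}, provability of $\Sigma$ from~$\Gamma$ already forces $\op\Gamma\cl_\pv U\subseteq\pv V$, the inclusion tacitly invoked in Proposition~\ref{p:transfer-up} and hence in Corollary~\ref{c:transfer-up}.

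I do not anticipate a genuine obstacle: the result is a bookkeeping combination of the two theorems of this section with the transfer corollary. The one point deserving a moment's attention is keeping the ambient pseudovariety fixed and consistent throughout — the $\pv U$ appearing in Theorem~\ref{t:locfin-identity} may be taken to be the ambient pseudovariety of~$\Gamma$ — together with the harmless but necessary observation that a set of honest identities plays simultaneously the roles of a set of $\pv U$-pseudoidentities and of a basis, over~$\pv U$, of the (locally finite, hence strong) pseudovariety~$\pv V$.
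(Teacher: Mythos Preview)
Your proposal is correct and follows exactly the approach the paper intends: the corollary is stated with a \qed\ immediately after the sentence ``Combining Theorem~\ref{t:locfin-identity} and~\ref{t:locfin-pseudovariety} with Corollary~\ref{c:transfer-up}, we obtain the following result,'' and your argument is precisely that combination. Your remark that soundness (Proposition~\ref{p:gen-provable-inequalities}) yields the inclusion $\op\Gamma\cl_\pv U\subseteq\pv V$ needed behind the scenes is a nice explicit check the paper leaves implicit.
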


It would be nice to replace the provability assumption in
Corollary~\ref{c:locfin} by the hypothesis that the given set of
pseudoidentity is locally finite. This would follow from
Corollary~\ref{c:transfer-up} if we could show that every identity is
t-strong, which is a particular case of the conjecture which have not
established in general.

\section{T-strongness}
\label{sec:t-strong}

The main proposition of this section gathers the statement of
t-strongness of several pseudoidentities. Some of these results play a
role in the application of Proposition~\ref{p:transfer-up} in later
sections.

We start with a simple lemma which is used in several points in the
sequel.

\begin{Lemma}
  \label{l:1-var-pseudoidentities}
  If $x^\alpha=x^\beta$ is a nontrivial one-variable \pv
  M-pseudoidentity, then it proves the pseudoidentity
  $x^\alpha=x^{\alpha+\omega}$.
\end{Lemma}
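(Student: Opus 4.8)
The plan is to carry out the whole argument inside $\Om 1M$, which by the preliminaries is identified with $\hat{\mathbb N}$ via $\alpha\mapsto x^\alpha$, so that the relations $\Sigma_\alpha$ built from $\Sigma=\{x^\alpha=x^\beta\}$ over the one-letter alphabet $\{x\}$ are binary relations on $\hat{\mathbb N}$; nontriviality of $x^\alpha=x^\beta$ just says $\alpha\ne\beta$. First I would dispose of a degenerate case: if $\alpha$ is infinite then $\alpha$ lies in the maximal subgroup $H_\omega$, whose identity is $\omega$, so $\alpha+\omega=\alpha$ and $x^\alpha=x^{\alpha+\omega}$ is a reflexive, hence provable, pseudoidentity. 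So assume $\alpha\in\mathbb N$. The one fact to record at the outset is that, for every $t\in\hat{\mathbb N}$, the pair $(x^{t+\alpha},x^{t+\beta})$ and its reverse lie in $\Sigma_0$: apply the step-$0$ rule with $\varphi$ the identity endomorphism of $\Om 1M$, the term $\mathbf t(y,w)=wy$, and the parameter $w=x^{t}$.

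Suppose first that $\beta\in\mathbb N$, and put $d=|\alpha-\beta|\ge1$. Using the above with $t=id$ (or, when $\alpha>\beta$, reading the hypothesis in the form $x^\beta=x^\alpha$, allowed by the symmetry built into $\Sigma_0$, and taking $t=(i+1)d$), one obtains $(x^{\alpha+id},x^{\alpha+(i+1)d})\in\Sigma_0$ for every $i\ge0$; transitivity then gives $(x^\alpha,x^{\alpha+id})\in\Sigma_1$ for all $i$. Since $i!\,d\to\omega d=\omega$, we have $x^{\alpha+i!\,d}\to x^{\alpha+\omega}$, so the sequence $(x^\alpha,x^{\alpha+i!\,d})_{i}$ of elements of $\Sigma_1$ converges to $(x^\alpha,x^{\alpha+\omega})$, which therefore belongs to $\Sigma_2\subseteq\tilde\Sigma$.

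The remaining case, $\beta$ infinite, is the one I expect to be the real point, since there is no period to iterate and the limit argument above is unavailable. Here I would use instead the small structural fact that $\omega$ is the identity of $H_\omega$: applying the observation with $t=\omega$ gives $(x^{\omega+\alpha},x^{\omega+\beta})\in\Sigma_0$, and $\omega+\beta=\beta$ because $\beta\in H_\omega$, so $(x^{\omega+\alpha},x^{\beta})\in\Sigma_0$; combining with $(x^\alpha,x^\beta)\in\Sigma_0$ by transitivity puts $(x^\alpha,x^{\omega+\alpha})=(x^\alpha,x^{\alpha+\omega})$ in $\Sigma_1\subseteq\tilde\Sigma$. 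Thus multiplication by the idempotent $x^\omega$ collapses what would otherwise be a transfinite iteration into a single step; apart from this, everything is the routine bookkeeping of reading off step-$0$ consequences, one application of transitivity, and (in the finite case) one limit.
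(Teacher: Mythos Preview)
Your proof is correct. The organization differs from the paper's: the paper treats the case ``$\beta$ infinite, or $\beta$ finite with $\beta>\alpha$'' uniformly by writing $x^\alpha=x^\alpha x^{\beta-\alpha}$, iterating to $x^\alpha=x^\alpha x^{(\beta-\alpha)n!}$, and passing to the limit $x^\alpha=x^{\alpha+(\beta-\alpha)\omega}=x^{\alpha+\omega}$; the remaining case is then handled by swapping $\alpha$ and $\beta$ and using one more transitivity step. Your split is instead by whether $\alpha$ and $\beta$ are finite or infinite. For $\alpha,\beta\in\mathbb N$ you do essentially the same iterate-and-limit argument; for $\alpha$ finite and $\beta$ infinite your idempotent trick (multiply by $x^\omega$ and use $\omega+\beta=\beta$) gives the conclusion already in $\Sigma_1$, with no limit. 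One small correction of perspective: your claim that in the mixed case ``there is no period to iterate and the limit argument above is unavailable'' is not quite right, since the paper's form of the iteration (with the possibly infinite step $\beta-\alpha\in\hat{\mathbb N}$) still works there; what is true is that \emph{your} shift-by-$d$ iteration with $d\in\mathbb N$ does not apply. Your alternative is nonetheless valid and has the pleasant feature of landing in $\Sigma_1$ rather than $\Sigma_2$ in that case.
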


\begin{proof}
  Suppose first that $\beta$ is either infinite or finite and greater
  than~$\alpha$. In this case, we can write the given pseudoidentity
  in the form $x^\alpha=x^\alpha x^{\beta-\alpha}$ which proves
  algebraically $x^\alpha=x^\alpha x^{(\beta-\alpha)n!}$ for every
  positive integer~$n$. Taking the limit as $n$ goes to infinite, we
  deduce that $x^\alpha=x^\beta$ proves $x^\alpha=x^\alpha
  x^{(\beta-\alpha)\omega}=x^{\alpha+\omega}$.

  If the initial assumption on~$\beta$ fails, then it holds when we
  interchange the roles of $\alpha$ and $\beta$. By the above, we
  deduce that, in that case, $x^\alpha=x^\beta$ proves
  $x^\beta=x^{\beta+\omega}$ and, therefore, also
  $x^\alpha=x^\beta=x^\beta x^\omega=x^\alpha
  x^\omega=x^{\alpha+\omega}$.
\end{proof}

The following finite semigroups play a role below:
\begin{itemize}
\item $Sl_2$ stands for the two-element semilattice;
\item for positive integers $m$ and $n$, let $B(m,n)$ denote the
  rectangular band $m\times n$, consisting of the pairs $(i,j)$ with
  $1\le i\le m$ and $1\le j\le n$, where multiplication is described
  by $(i,j)(k,\ell)=(i,\ell)$;
\item for positive integers $m$ and $n$, let %
  $C_{m,n}=\langle a: a^m=a^{m+n}\rangle$ be the monogenic semigroup
  with $m+n-1$ elements and maximal subgroup with $n$~elements;
\item for a positive integer $n$, let $C_n$ be the cyclic group of
  order $n$;
\item $B_2$ is the five-element aperiodic Brandt semigroup, which is
  given by the presentation
  $$\langle a,b; aba=a,bab=b, a^2=b^2=0\rangle$$
  as a semigroup with zero;
  \item $N$ is the semigroup with zero given by the presentation
    $$\langle a,b; a^2=b^2=ba=0\rangle;$$
\item $T$ is the semigroup with zero given by the presentation
  $$\langle e,a; e^2=e, ea=a, ae=0\rangle.$$
\end{itemize}

We say that two sets of \pv U-pseudoidentities are \emph{equivalent}
if every pseudoidentity from each of them is provable from the other
set. Another simple result that is useful below is the following
lemma.

\begin{Lemma}
  \label{l:equivalent}
  Let $\Gamma$ be a set of pseudoidentities and suppose that $\Gamma$
  is equivalent to a single pseudoidentity $\varepsilon$. If each
  pseudoidentity in $\Gamma$ is t-strong, then so is $\varepsilon$.
\end{Lemma}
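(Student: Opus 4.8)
The plan is to unwind the definition of t-strongness and exploit the symmetry of the word ``equivalent''. Suppose $\varepsilon$ is a \pv U-pseudoidentity, $\Gamma$ is a set of \pv U-pseudoidentities equivalent to $\{\varepsilon\}$ in the sense that each element of $\Gamma$ is provable from $\varepsilon$ and $\varepsilon$ is provable from $\Gamma$, and suppose every pseudoidentity in $\Gamma$ is t-strong. To show $\varepsilon$ is t-strong, I must take an arbitrary set $\Delta$ of \pv U-pseudoidentities such that $\op\Delta\cl$ satisfies $\varepsilon$, and prove that $\varepsilon$ is provable from $\Delta$.

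First I would observe that since $\varepsilon$ is provable from $\Gamma$, the pseudovariety $\op\Gamma\cl$ is contained in $\op\{\varepsilon\}\cl$ (by soundness, Proposition~\ref{p:gen-provable-inequalities}); and conversely, since each element of $\Gamma$ is provable from $\varepsilon$, we get $\op\{\varepsilon\}\cl\subseteq\op\Gamma\cl$. Hence $\op\Gamma\cl=\op\{\varepsilon\}\cl$, so the hypothesis that $\op\Delta\cl$ satisfies $\varepsilon$ is equivalent to $\op\Delta\cl\subseteq\op\Gamma\cl$, i.e.\ $\op\Delta\cl$ satisfies every pseudoidentity in $\Gamma$. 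Now apply t-strongness of each $\gamma\in\Gamma$: since $\op\Delta\cl$ satisfies $\gamma$, we conclude $\gamma$ is provable from $\Delta$. Thus \emph{every} pseudoidentity of $\Gamma$ is provable from $\Delta$.

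The remaining step is a transitivity-of-provability argument: $\varepsilon$ is provable from $\Gamma$, and every member of $\Gamma$ is provable from $\Delta$, so $\varepsilon$ should be provable from $\Delta$. To make this precise I would argue that $\widetilde\Gamma\subseteq\widetilde\Delta$: indeed, since each $\gamma\in\Gamma$ lies in $\widetilde\Delta$, one checks by a straightforward transfinite induction on $\alpha$ that $\Gamma_\alpha\subseteq\widetilde\Delta$ for every ordinal $\alpha$ — the base case $\Gamma_0\subseteq\widetilde\Delta$ uses that $\widetilde\Delta$ is a fully invariant stable (closed) congruence containing $\Gamma$ (again Proposition~\ref{p:gen-provable-inequalities}), and the successor/limit steps use respectively that $\widetilde\Delta$ is transitive, topologically closed, and closed under unions. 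Hence $\widetilde\Gamma\subseteq\widetilde\Delta$, and since $\varepsilon\in\widetilde\Gamma$ we get $\varepsilon\in\widetilde\Delta$, i.e.\ $\varepsilon$ is provable from $\Delta$, as required.

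I do not expect any real obstacle here; the only mild subtlety is that one must phrase ``provable from $\Gamma$'' and ``provable from $\Delta$'' over the appropriate (common) alphabet and use that $\widetilde\Delta$ being a fully invariant closed congruence absorbs the whole proof tree of anything provable from a subset of it. This is exactly the ``closure-operator'' flavour already used implicitly in Propositions~\ref{p:transfer-down} and~\ref{p:transfer-up}, so the induction is routine; I would likely compress it to a sentence, since the paper has by this point established all the structural facts about $\widetilde{(\cdot)}$ that are needed.
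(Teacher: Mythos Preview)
Your proof is correct and follows essentially the same line as the paper's: take an arbitrary $\Sigma$ (your $\Delta$) with $\op\Sigma\cl\models\varepsilon$, use equivalence plus soundness to get $\op\Sigma\cl\models\gamma$ for each $\gamma\in\Gamma$, invoke t-strongness of each $\gamma$ to obtain $\Sigma\vdash\gamma$, and then use equivalence again (your transitivity-of-provability step) to conclude $\Sigma\vdash\varepsilon$. The paper compresses your final transfinite induction into a single clause (``By the equivalence hypothesis again''), but the argument is the same.
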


\begin{proof}
  Let $\Sigma$ be a set of pseudoidentities and suppose that
  $\op\Sigma\cl$ satisfies $\varepsilon$. From the equivalence
  hypothesis, it follows  that $\op\Sigma\cl$ satisfies each
  pseudoidentity $\gamma$ from~$\Gamma$. Since $\gamma$ is t-strong,
  we deduce that $\Sigma$ proves~$\gamma$. By the equivalence
  hypothesis again, we conclude that $\Sigma$ proves~$\varepsilon$.
\end{proof}

We are now ready for the announced proposition.

\begin{Prop}
  \label{p:t-strong}
  Each of the following pseudoidentities is t-strong:
  \begin{enumerate}[(i)]
  \item\label{item:G-t-strong}
    $x^\omega=1$;
  \item\label{item:CR-t-strong}
    $x^{\omega+1}=x$;
  \item\label{item:A-t-strong}
    $x^{\omega+1}=x^\omega$;
  \item\label{item:ER-t-strong}
    $(x^\omega y)^\omega x^\omega=(x^\omega y)^\omega$;
  \item\label{item:EJ-t-strong}
    $(x^\omega y)^\omega=(y x^\omega)^\omega$;
  \item\label{item:DS-t-strong}
    $\bigl((xy)^\omega x(xy)^\omega\bigr)^\omega=(xy)^\omega$;
  \item\label{item:DA-t-strong}
    $(xy)^\omega x(xy)^\omega=(xy)^\omega$;
  \item\label{item:DRG-t-strong}
    $((xy)^\omega x)^\omega=(xy)^\omega$;
  \item\label{item:DG-t-strong}
    $(xy)^\omega=(yx)^\omega$;
  \item\label{item:R-t-strong}
    $(xy)^\omega x=(xy)^\omega$;
  \item\label{item:ZE-t-strong}
    $x^\omega y=yx^\omega$;
  \item\label{item:CS-t-strong}
    $(xy)^\omega x=x$.
  \end{enumerate}
\end{Prop}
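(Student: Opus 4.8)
The plan is to establish each of the twelve items by exhibiting, for an arbitrary set $\Sigma$ of \pv M- or \pv S-pseudoidentities satisfying the given pseudoidentity $\varepsilon$, an explicit proof of $\varepsilon$ from $\Sigma$ in the sense of Section~\ref{sec:scheme}. The common strategy will be: from the hypothesis that $\op\Sigma\cl$ satisfies $\varepsilon$, extract structural consequences — typically by observing that some small "forbidden" semigroup from the list ($Sl_2$, $B(m,n)$, $C_{m,n}$, $C_n$, $B_2$, $N$, $T$) fails $\varepsilon$ and hence does not lie in $\op\Sigma\cl$ — and translate the resulting membership information into concrete pseudoidentities that $\Sigma$ must prove. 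Several items reduce to a one-variable computation: for instance, item~(\ref{item:G-t-strong}) $x^\omega=1$, since it forces $\op\Sigma\cl\subseteq\pv G$; by Lemma~\ref{l:1-var-pseudoidentities} and the description of $\Om1G=\hat{\mathbb Z}$ given in Section~\ref{sec:prelim}, $\Sigma$ must prove enough one-variable pseudoidentities to pin down the trivial group among the finite cyclic groups, whence $x^\omega=1$ follows by taking a limit. Item~(\ref{item:CR-t-strong}) (completely regular, $x^{\omega+1}=x$) and item~(\ref{item:A-t-strong}) (aperiodic, $x^{\omega+1}=x^\omega$) are handled the same way, using that these are one-variable conditions and invoking Lemma~\ref{l:1-var-pseudoidentities}.

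For the genuinely two-variable items — (\ref{item:ER-t-strong}), (\ref{item:EJ-t-strong}), (\ref{item:DS-t-strong}), (\ref{item:DA-t-strong}), (\ref{item:DRG-t-strong}), (\ref{item:DG-t-strong}), (\ref{item:R-t-strong}), (\ref{item:ZE-t-strong}), (\ref{item:CS-t-strong}) — the argument is subtler. Here I would use that each defines a well-studied pseudovariety (\pv{ER}, \pv{EJ}, \pv{DS}, \pv{DA}, $\overline{\pv{DRG}}$ or similar, \pv{DG}, \pv R, \pv{ZE}, \pv{CS}) whose structure is tightly controlled. The key observation is that for a set $\Sigma$ of pseudoidentities with $\op\Sigma\cl\subseteq\op\varepsilon\cl$, the relatively free profinite semigroup modulo $\Sigma$ maps onto $\Om A{}\op\varepsilon\cl$, and one wants to show the congruence $\tilde\Sigma$ already identifies the two sides of $\varepsilon$. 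I would argue this by working inside a suitable finite quotient: the failure of $\varepsilon$ in a finite semigroup $S$ witnessed by elements $s,t$ produces, via the forbidden-structure analysis, a homomorphic image of a subsemigroup of $S$ isomorphic to one of the listed "obstruction" semigroups; since that obstruction is not in $\op\Sigma\cl$, one of the defining relations of the obstruction must be violated, giving a concrete pseudoidentity in $\tilde\Sigma$. Assembling these across all finite quotients and taking the transitive-then-topological closure yields $(\,\text{lhs of }\varepsilon,\ \text{rhs of }\varepsilon\,)\in\tilde\Sigma$. Where convenient I would replace $\varepsilon$ by an equivalent finite set of pseudoidentities and invoke Lemma~\ref{l:equivalent} to reduce to a more tractable generator.

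I expect the main obstacle to be the two-variable items where the forbidden structure is not a single semigroup but a family — notably (\ref{item:DS-t-strong}) and (\ref{item:DG-t-strong}), where controlling the maximal subgroups forces one to handle arbitrary cyclic groups $C_n$ simultaneously, and (\ref{item:ZE-t-strong}) ($x^\omega y = y x^\omega$, central idempotents), where the obstructions $N$ and $T$ only block the two "directions" of noncommutation separately. In those cases the delicate point is that knowing $\Sigma$ proves the defining relations of every finite obstruction is not literally the same as knowing $\Sigma$ proves $\varepsilon$ itself; one must verify that the passage to the limit is legitimate, i.e.\ that a sequence of finite-semigroup witnesses can be chosen converging (after the usual compactness reduction) to a witness for $\varepsilon$ in $\Om A{}\pv S$. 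This convergence step, together with the bookkeeping that each finite approximant's relations genuinely lie in some $\Sigma_\alpha$, is where the real work sits; the individual one-variable and small-obstruction computations are routine once the scheme of the argument is fixed.
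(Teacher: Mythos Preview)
Your high-level plan --- use excluded structures to force $\Sigma$ to contain pseudoidentities from which $\varepsilon$ can be derived --- is exactly the paper's strategy, but the proposal has real gaps in execution.

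First, your treatment of item~(\ref{item:G-t-strong}) is off: $x^\omega=1$ holds in \emph{every} finite group, so ``pinning down the trivial group among the finite cyclic groups'' is not what is needed, and a one-variable analysis via $\Om1G$ does not produce the constant~$1$. The paper's argument is direct: $Sl_2\notin\op\Sigma\cl$, so some $(u=v)\in\Sigma$ fails there, forcing a variable to occur on only one side; substituting $x^\omega$ for that variable and $1$ for the rest gives $x^\omega=1$ algebraically, with no limits and no $\hat{\mathbb Z}$. The only item that genuinely needs the family-of-obstructions plus $\hat{\mathbb Z}$-gcd trick is~(\ref{item:A-t-strong}), where all $C_p$ are used.

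Second, you underuse the reduction via Lemma~\ref{l:equivalent}. In the paper, items (\ref{item:EJ-t-strong}), (\ref{item:DA-t-strong}), (\ref{item:DRG-t-strong}), (\ref{item:DG-t-strong}), (\ref{item:R-t-strong}) are handled \emph{entirely} by exhibiting a provable equivalence with a conjunction of earlier items (for instance (\ref{item:DG-t-strong}) $\Leftrightarrow$ (\ref{item:DRG-t-strong}) $\wedge$ dual, and (\ref{item:DA-t-strong}) $\Leftrightarrow$ (\ref{item:A-t-strong}) $\wedge$ (\ref{item:DS-t-strong})). So your worry about families of cyclic-group obstructions for (\ref{item:DS-t-strong}) and (\ref{item:DG-t-strong}) is misplaced: (\ref{item:DS-t-strong}) needs only the single obstruction $B_2^1$, and (\ref{item:DG-t-strong}) needs no new obstruction at all.

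Third, and most importantly, your mechanism for the genuinely two-variable cases --- ``assembling these across all finite quotients and taking the transitive-then-topological closure'' --- is not a proof in the scheme of Section~\ref{sec:scheme} and does not correspond to anything the paper does. Knowing that an obstruction $O$ fails some $(u=v)\in\Sigma$ only tells you that a particular evaluation in $O$ separates $u$ and $v$. The actual content is to choose an explicit substitution of the variables of $u=v$ by pseudowords in $x,y$, guided by the structure of $O$, so that the resulting element of $\Sigma_0$ can be massaged into $\varepsilon$ by concrete algebraic rewriting and at most one limit. For example, for (\ref{item:ER-t-strong}) the paper extracts from the failure in $B(1,2)^1$ a pseudoidentity of shape $u'x=v'y$ with $x\ne y$, substitutes $x\mapsto x^\omega$, $y\mapsto x^\omega y$, and raises to the $\omega$ power; for (\ref{item:DS-t-strong}) it substitutes $x\mapsto(xy)^\omega x$, $y\mapsto y(xy)^\omega$ and then iterates and takes a limit. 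Your proposal gives no indication of such computations, and the ``convergence across finite quotients'' idea you sketch is not how the proof scheme works: limits are taken in $\Om A{\pv U}$, not over a directed system of finite semigroups.
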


\begin{proof}
  For simplicity, we may refer to one of
  (\ref{item:G-t-strong})--(\ref{item:CS-t-strong}) as meaning either
  a part of the proposition or the pseudoidentity in it; which is the
  case should be clear from the context.
  
  Let $\Sigma$ be a set of pseudoidentities. Except in the case
  of~(\ref{item:CS-t-strong}), where we should take \pv
  S-pseudoidentities to get a pseudoidentity that is not equivalent to
  $x^\omega=1$, we assume that the elements of~$\Sigma$ are \pv
  M-pseudoidentities. For each of the pseudoidentities $\varepsilon$
  in the statement of the proposition, we assume that the
  pseudovariety $\op\Sigma\cl$ satisfies $\varepsilon$ and show that
  $\Sigma$ proves~$\varepsilon$.
  
  (\ref{item:G-t-strong}) Since all finite semigroups satisfying
  $\Sigma$ are groups, in particular $Sl_2$ fails some pseudoidentity
  $u=v$ from~$\Sigma$, which means that there is some variable that
  occurs in one of the sides but not in the other. Substituting
  $x^\omega$ for that variable and $1$ for all others, we conclude
  that $x^\omega=1$ is algebraically provable from~$\Sigma$.

  (\ref{item:CR-t-strong}) Since the monoid $C_{2,1}^1$
  fails~(\ref{item:CR-t-strong}), it must fail some pseudoidentity
  $u=v$ from~$\Sigma$ under some suitable evaluation. If such an
  evaluation gives the values $0$ and $1$ for $u$ and $v$, then $u$
  and $v$ do not involve the same variables, so that, substituting
  $x^\omega$ for one variable and $1$ for all others, one gets
  $x^\omega=1$, from which (\ref{item:CR-t-strong})~follows.
  Otherwise, one of the sides, say $u$, is evaluated to $a$ and the
  other to either $0$ or $1$. Substituting $1$ for all variables in
  $u$ that are not evaluated to~$a$ and $x$ for every other variable,
  we obtain a pseudoidentity of the form $x=x^\alpha$, with
  $\alpha\in\hat{\mathbb{N}}\setminus\{1\}$. By
  Lemma~\ref{l:1-var-pseudoidentities}, each such pseudoidentity
  proves~(\ref{item:CR-t-strong}).
  
  (\ref{item:A-t-strong}) Each of the cyclic groups of prime order
  fails the pseudoidentity (\ref{item:A-t-strong}). For a pseudoword
  $w$ and a variable $x$, let $w_x$ be the pseudoword that is obtained
  from $w$ by substituting $1$ for every variable except~$x$. Since
  the cyclic group $C_p$ satisfies a pseudoidentity $u=v$ if and only
  if, for every variable $x$, it satisfies the pseudoidentity
  $u_x=v_x$, it follows that, if $C_p$ fails~$\Sigma$, then there
  is $\alpha_p\in\hat{\mathbb{N}}\setminus\mathbb{N}=\hat{\mathbb{Z}}$
  such that $p$ does not divide~$\alpha_p$ and $\Sigma$ proves
  $x^{\alpha_p}=x^\omega$.

  Let $\alpha$ be a greatest common divisor of the $\alpha_p$, which
  is a limit of a sequence of linear combinations of the $\alpha_p$.
  Since each prime $p$~does not divide $\alpha_p$, it cannot
  divide~$\alpha$. Hence, $\alpha$~is invertible in the
  ring~$\hat{\mathbb{Z}}$ and there exists $\beta\in\hat{\mathbb{Z}}$
  such that $\alpha\beta=\omega+1$. Now, if
  $\gamma=\sum_{i=1}^n\gamma_i\alpha_{p_i}$, then $\Sigma$ proves
  $x^\gamma=x^\omega$ by raising both sides of each pseudoidentity
  $x^{\alpha_{p_i}}=x^\omega$ to the $\gamma_i$ power and multiplying
  the results side by side. The pseudoidentity $x^\alpha=x^\omega$ is
  therefore a limit of pseudoidentities provable from~$\Sigma$, and
  whence it is provable from~$\Sigma$. Finally, raising both sides of
  $x^\alpha=x^\omega$ to the $\beta$ power, we deduce that the
  pseudoidentity (\ref{item:A-t-strong}) is provable from~$\Sigma$.

  (\ref{item:ER-t-strong}) As it is easy to see, and well known,
  $B(1,2)^1$ fails a pseudoidentity $u=v$ if and only if, from right
  to left, the order of first occurrences of variables in $u$ and $v$
  is not the same. In particular, the monoid $B(1,2)^1$
  fails~(\ref{item:ER-t-strong}) and, therefore, it fails some
  pseudoidentity from~$\Sigma$. Hence, there is a substitution that
  sends all variables but two to~$1$ that yields from some
  pseudoidentity in~$\Sigma$ a two-variable pseudoidentity of the form
  $ux=vy$, where $x$ and $y$ are distinct variables, or a nontrivial
  pseudoidentity $u=1$ or $1=u$. In the latter case, by substituting
  all the variables by $x^\omega$, we conclude that $\Sigma$ proves
  $x^\omega=1$ and, hence, every pseudoidentity in which both sides
  are products of $\omega$ powers, as is the case
  of~(\ref{item:ER-t-strong}). Thus, it remains to consider the former
  case. Applying the substitution $x\mapsto x^\omega$ and $y\mapsto
  x^\omega y$ and raising both sides to the $\omega$ power, we
  conclude that $\Sigma$ proves the
  pseudoidentity~(\ref{item:ER-t-strong}).

  (\ref{item:EJ-t-strong}) Noting that the sets of pseudoidentities
  $\{(x^\omega y)^\omega x^\omega=(x^\omega y)^\omega, %
  x^\omega(yx^\omega)^\omega=(yx^\omega)^\omega\}$ and %
  $\{(x^\omega y)^\omega=(yx^\omega)^\omega\}$ %
  are equivalent, in view of~(\ref{item:ER-t-strong}) and its dual, it
  suffices to apply Lemma~\ref{l:equivalent}.

  (\ref{item:DS-t-strong}) The evaluation of $x$ to $a$ and $y$ to~$b$
  shows that the monoid $B_2^1$ fails the
  pseudoidentity~(\ref{item:DS-t-strong}). Hence, there is some
  pseudoidentity $u=v$ from~$\Sigma$ and an evaluation of the
  variables in~$B_2^1$ which yields different values for $u$ and~$v$.
  A variable being assigned the value $1$ corresponds to deleting that
  variable. For all other values in~$B_2^1$, since $B_2$ is generated
  by $\{a,b\}$ as a semigroup, we may first substitute the variable by
  a word in the variables $x$ and $y$ and then evaluate $x$ to~$a$ and
  $y$ to~$b$. Thus, under the assumption that $B_2^1$ fails $u=v$, we
  conclude that there is a pseudoidentity $u'=v'$ in $x$ and $y$ that
  can be proved from~$\Sigma$ and which fails in~$B_2^1$ under the
  evaluation $x\mapsto a$ and $y\mapsto b$. Under such an evaluation,
  not both sides are evaluated to~$0$. If one of the sides is~$1$
  then, substituting $x$ for~$y$, we get a pseudoidentity of the form
  $x^\alpha=1$, which proves $x^\omega=1$ and, therefore, also the
  pseudoidentity~(\ref{item:DS-t-strong}). Otherwise, one of the sides
  of the pseudoidentity $u'=v'$, say $u'$, must be a factor of
  $(xy)^\omega$ while $v'$ either admits $x^2$ or $y^2$ as a factor or
  does not start or end with the same letter as~$u'$. By multiplying
  both sides of $u'=v'$ by suitable factors of $(xy)^\omega$, we may
  prove from~$u'=v'$ a pseudoidentity $u''=v''$ of the form
  $(xy)^\omega=w$ where $w$~is a pseudoword that admits at least one
  of the words $x^2$ and $y^2$ as a factor, starts with $x$ and ends
  with~$y$. Substituting $(xy)^\omega x$ for $x$ and $y(xy)^\omega$
  for~$y$, we obtain from $u''=v''$ a pseudoidentity of the form %
  \begin{equation}
    \label{eq:subpseudovarieties-DS-prove-DS-1}
    (xy)^\omega=(xy)^\alpha x(xy)^\beta w
  \end{equation}
  or of the form
  \begin{equation}
    \label{eq:subpseudovarieties-DS-prove-DS-2}
    (xy)^\omega=(xy)^\alpha y(xy)^\beta w,
  \end{equation}
  where $\alpha$ and $\beta$ are infinite exponents and $w$ is some
  pseudoword. From the pseudoidentity
  \eqref{eq:subpseudovarieties-DS-prove-DS-1}, we may prove,
  algebraically,
  $$(xy)^\omega
  =(xy)^\alpha x\cdot(xy)^\omega \cdot (xy)^\beta w %
  =\cdots %
  =\bigl((xy)^\alpha x\bigr)^{n!} (xy)^\omega \bigl((xy)^\beta w\bigr)^{n!}
  $$
  and so, taking limits, also
  $$(xy)^\omega
  =\bigl((xy)^\alpha x\bigr)^\omega (xy)^\omega \bigl((xy)^\beta
  w\bigr)^\omega
  $$
  which entails %
  $(xy)^\omega=\bigl((xy)^\alpha x(xy)^\omega\bigr)^\omega$.
  Similarly, from
  $$(xy)^\omega
  =\bigl((xy)^\alpha x(xy)^\omega\bigr)^{\omega-1}(xy)^\alpha %
  \cdot(xy)^\omega\cdot x(xy)^\omega,$$%
  concentrating on the rightmost factor, we may deduce the desired
  pseudoidentity (\ref{item:DS-t-strong}). If we start from the
  pseudoidentity \eqref{eq:subpseudovarieties-DS-prove-DS-2} instead
  of~\eqref{eq:subpseudovarieties-DS-prove-DS-1}, we reach similarly
  the pseudoidentity %
  $(xy)^\omega=\bigl((xy)^\omega y(xy)^\omega\bigr)^\omega$.
  Interchanging $x$ and $y$, we obtain %
  $(yx)^\omega=\bigl((yx)^\omega x(yx)^\omega\bigr)^\omega$. %
  Multiplying both sides on the left by $x$ and on the right by
  $y(xy)^{\omega-1}$, and rearranging the right hand side (using
  equalities valid in~\pv M) yields the
  pseudoidentity~(\ref{item:DS-t-strong}).

  (\ref{item:DA-t-strong}) First note that (\ref{item:DA-t-strong})
  proves (\ref{item:DS-t-strong}) by simply raising both sides to the
  $\omega$ power; it also proves (\ref{item:A-t-strong}) by
  substituting $y$ by $x$ and using equalities that are valid in~\pv
  M.
  Conversely,
  from~(\ref{item:DS-t-strong}), using (\ref{item:A-t-strong}), we
  deduce that $(xy)^\omega x(xy)^\omega=\bigl((xy)^\omega
  x(xy)^\omega\bigr)^{\omega+1}=\bigl((xy)^\omega
  y(xy)^\omega\bigr)^\omega=(xy)^\omega$. It remains to apply
  Lemma~\ref{l:equivalent} and the above.

  (\ref{item:DRG-t-strong}) The substitution $x\mapsto x^\omega$
  transforms~(\ref{item:DRG-t-strong}) into the pseudoidentity %
  $\bigl((x^\omega y)^\omega x^\omega\bigr)^\omega=(xy)^\omega$. Since
  $(x^\omega y)^\omega x^\omega$ is an idempotent, this shows that
  (\ref{item:DRG-t-strong}) proves~(\ref{item:ER-t-strong}).
  Similarly, upon multiplication of both sides
  of~(\ref{item:DRG-t-strong}) on the right by $(xy)^\omega$, we
  conclude that (\ref{item:DRG-t-strong})
  proves~(\ref{item:DS-t-strong}). In view of the above and
  Lemma~\ref{l:equivalent}, it remains to establish that, together,
  the pseudoidentities (\ref{item:ER-t-strong})
  and~(\ref{item:DS-t-strong}) prove (\ref{item:DRG-t-strong}).
  Indeed, the substitution $x\mapsto(xy)^\omega x$,
  $y\mapsto(xy)^\omega$ in~(\ref{item:ER-t-strong}) gives
  $\bigl((xy)^\omega x\bigr)^\omega=\bigl((xy)^\omega
  x(xy)^\omega\bigr)^\omega$. Combining the latter pseudoidentity
  with~(\ref{item:DS-t-strong}), by transitivity we
  obtain~(\ref{item:DRG-t-strong}).

  (\ref{item:DG-t-strong}) Note first that (\ref{item:DG-t-strong})
  proves the pseudoidentities
  $$(xy)^\omega=(yx)^\omega=y(xy)^{\omega-1}\cdot(xy)^\omega\cdot(xy)^\omega x.$$
  Hence, (\ref{item:DG-t-strong}) proves %
  $(xy)^\omega=\bigl(y(xy)^{\omega-1}\bigr)^{n!}\cdot(xy)^\omega\cdot\bigl((xy)^\omega
  x\bigr)^{n!}$; by taking limits, we get
  $(xy)^\omega=\bigl(y(xy)^{\omega-1}\bigr)^\omega\cdot(xy)^\omega\cdot\bigl((xy)^\omega
  x\bigr)^\omega$ and, therefore, (\ref{item:DG-t-strong})
  proves~(\ref{item:DRG-t-strong}). Since (\ref{item:DG-t-strong}) is
  its own left right dual, (\ref{item:DG-t-strong}) also proves the
  dual of~(\ref{item:DRG-t-strong}), namely the pseudoidentity
  $\bigl(x(yx)^\omega\bigr)^\omega=(yx)^\omega$. Conversely,
  from~(\ref{item:DRG-t-strong}) and its dual, we may prove
  $$(xy)^\omega
  =\bigl(y(xy)^\omega\bigr)^\omega
  =\bigl((yx)^\omega y\bigr)^\omega
  =(yx)^\omega.$$
  Taking into account previous parts of the proposition, it suffices
  to invoke Lemma~\ref{l:equivalent}.

  (\ref{item:R-t-strong}) Substituting $y$ by $x$ in the
  pseudoidentity (\ref{item:R-t-strong}) yields
  (\ref{item:A-t-strong}) while, raising both sides to the $\omega$
  power we obtain~(\ref{item:DRG-t-strong}). Once again, in view of
  Lemma~\ref{l:equivalent}, it suffices to show that, together,
  (\ref{item:A-t-strong}) and~(\ref{item:DRG-t-strong}) also
  prove~(\ref{item:R-t-strong}) which can be done as in proof
  of~(\ref{item:DA-t-strong}):
  $$(xy)^\omega %
  =\bigl((xy)^\omega x\bigr)^\omega %
  =\bigl((xy)^\omega x\bigr)^\omega (xy)^\omega x %
  =(xy)^\omega (xy)^\omega x %
  =(xy)^\omega x.
  $$

  (\ref{item:ZE-t-strong}) Raising both sides
  of~(\ref{item:ZE-t-strong}) to the $\omega$ power, we
  obtain~(\ref{item:EJ-t-strong}) and so $\Sigma$
  proves~(\ref{item:EJ-t-strong}). On the other hand, the monoid $T^1$
  and its left right dual fails the
  pseudoidentity~(\ref{item:ZE-t-strong}). Hence, each of them fails
  some pseudoidentity from~$\Sigma$.

  Let us consider first the fact that $T^1$ fails some pseudoidentity
  from~$\Sigma$. As in earlier arguments, we deduce that $\Sigma$
  proves either the pseudoidentity $x=x^{\omega+1}$ or a two-variable
  pseudoidentity of the form $x^\omega y=w$, where $w$~is a pseudoword
  that admits $yx$ as a factor. If $y$ occurs more than once in~$w$
  then, substituting $x$ by~$1$, we get a nontrivial pseudoidentity
  $y=y^\alpha$, which entails $x=x^{\omega+1}$ by
  Lemma~\ref{l:1-var-pseudoidentities}. Hence, $\Sigma$ proves either
  $x=x^{\omega+1}$ or %
  $x^\omega y=x^\omega yx^\omega$. Working instead with the dual of
  the monoid~$T^1$, we deduce dually that $\Sigma$ proves either
  $x=x^{\omega+1}$ or %
  $yx^\omega=x^\omega yx^\omega$.

  Suppose first that $\Sigma$ proves $x=x^{\omega+1}$. Since $\Sigma$
  also proves~(\ref{item:EJ-t-strong}), it proves the following
  pseudoidentities:
  $$x^\omega y
  =(x^\omega y)^{\omega+1}
  =x^\omega(yx^\omega)^\omega y
  =x^\omega(x^\omega y)^\omega y
  =(x^\omega y)^\omega y
  =(y x^\omega)^\omega y.$$
  Dually, $\Sigma$ proves $yx^\omega=y(x^\omega y)^\omega$ and so
  also~(\ref{item:ZE-t-strong}). Thus, we may assume that
  $\Sigma$~does not prove $x=x^{\omega+1}$. From the above, it follows
  that $\Sigma$ proves $x^\omega y=x^\omega yx^\omega=yx^\omega$, as
  required.

  (\ref{item:CS-t-strong}) Substituting $y$ by~$x$
  in~(\ref{item:CS-t-strong}), we obtain the pseudoidentity
  (\ref{item:CR-t-strong}). On the other hand, since $Sl_2$ fails
  (\ref{item:CS-t-strong}), it also fails some pseudoidentity
  from~$\Sigma$, and so there is some variable $z$ that occurs only on
  one side of that pseudoidentity. Substituting %
  $x^\omega yx^\omega$ for $z$ and $x^\omega$ for every other
  variable, we conclude that $\Sigma$ proves a pseudoidentity of the
  form %
  $(x^\omega yx^\omega)^\alpha=x^\omega$ and so also the special case
  where $\alpha=\omega$, which may be written in the form %
  $(x^\omega y)^\omega x^\omega=x^\omega$. Substituting $xy$ for $y$,
  multiplying both sides on the right by $x$, and using additionally
  the pseudoidentity $x^{\omega+1}=x$, we obtain the required
  pseudoidentity (\ref{item:CS-t-strong}).
\end{proof}

The choice of the monoids and semigroups considered in the proof of
Proposition~\ref{p:t-strong} was guided by several results in the
literature, even though such results are not explicitly used in the
proof. For a pseudoidentity $\varepsilon$, we essentially take a
\emph{complete set of excluded monoids}, that is, a set of finite
monoids such that a pseudovariety \pv V satisfies $\varepsilon$ if and
only if contains none of the monoids from the set. Such sets can be
found in the literature for several pseudoidentities. See
\cite{Almeida:1994a} for further details. It should be noted that the
proof of Proposition~\ref{p:t-strong} in fact implies that the sets in
question are complete sets of excluded monoids.

\section{H-strongness: the role of reducibility}
\label{sec:reducible}

In this section, we consider a method that allows us to give a class
of examples of h-strong sets of pseudoidentities. In all of them, the
key property of the pseudovariety $\pv V=\op\Sigma\cl$ is that every
pseudoidentity $u=v$ valid in~\pv V is the limit of a sequence of
identities in a suitable implicit signature $\sigma$ which are also
valid in~\pv V. In this case we say that the pseudovariety \pv V is
\emph{$\sigma$-reducible for the equation $x=y$}.

In terms of formal proofs, the above property reduces the conjecture
for~$\Sigma$ to proving from $\Sigma$ the $\sigma$-identities which
are valid in~\pv V. If, additionally, one knows a basis $\Sigma'$ of
identities for the variety $\pv V^\sigma$, all that is needed is to
show that $\Sigma'$ is provable from~$\Sigma$. In summary, we have the
following simple observation which provides a method of proof that
underlies all the examples in the remainder of the section.

\begin{Prop}
  \label{p:general}
  Let $\sigma$ be an implicit signature and let $\Sigma$ be a set of
  \pv U-pseudoidentities defining a $\sigma$-reducible pseudovariety
  \pv V for the equation $x=y$. If the variety $\pv V^\sigma$ admits a
  basis whose identities are provable from~$\Sigma$, then $\Sigma$~is
  h-strong.\qed
\end{Prop}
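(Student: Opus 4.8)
The plan is to show directly that, over any finite alphabet $A$, the relation $\tilde{\Sigma}$ already contains every $\pv U$-pseudoidentity valid in $\pv V=\op\Sigma\cl$, by combining the two reductions announced before the statement: $\sigma$-reducibility for the equation $x=y$ and Birkhoff's completeness theorem for equational logic in the signature $\sigma$. The structural fact I would establish first is that, beyond what Proposition~\ref{p:gen-provable-inequalities} gives, the relation $\tilde{\Sigma}$ is compatible with \emph{every} operation of $\sigma$, hence with arbitrary $\sigma$-terms. Indeed, fix $o\in\sigma$ of arity $k$ and write the pseudoword $o(a_1,\dots,a_k)$ as the limit of a sequence of terms $\mathbf{t}_m(a_1,\dots,a_k)$ in the basic operations of~\pv U; given $(s_i,t_i)\in\tilde{\Sigma}$ for $i=1,\dots,k$, stability and transitivity of $\tilde{\Sigma}$ yield $(\mathbf{t}_m(s_1,\dots,s_k),\mathbf{t}_m(t_1,\dots,t_k))\in\tilde{\Sigma}$ for every $m$ (replacing one argument at a time), and letting $m\to\infty$ and using that $\tilde{\Sigma}$ is closed gives $(o(s_1,\dots,s_k),o(t_1,\dots,t_k))\in\tilde{\Sigma}$. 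Thus the restriction of $\tilde{\Sigma}$ to the free $\pv U^\sigma$-algebra $\oms AU$ is a $\sigma$-congruence; moreover it is fully invariant for $\sigma$-endomorphisms of $\oms AU$, since every such endomorphism extends to a continuous endomorphism of $\Om AU$ and $\tilde{\Sigma}$ is fully invariant for these. One also notes, exactly as in the proof of Proposition~\ref{p:transfer-up}, that provability is preserved under continuous homomorphisms $\Om BU\to\Om AU$, which allows moving freely between alphabets.

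With this in hand the argument is short. By hypothesis every identity of a basis $\Sigma'$ of the variety $\pv V^\sigma$ is provable from $\Sigma$, hence lies in $\tilde{\Sigma}$; since $\tilde{\Sigma}\cap(\oms AU)^2$ is a fully invariant $\sigma$-congruence on $\oms AU$, it contains the fully invariant $\sigma$-congruence generated by $\Sigma'$, which by the completeness theorem for equational logic is precisely the kernel of the natural map $\oms AU\twoheadrightarrow\oms AV$. Consequently every $\sigma$-identity over $A$ that is valid in~\pv V is provable from $\Sigma$. Now let $u=v$ be a $\pv U$-pseudoidentity with $u,v\in\Om AU$ valid in $\pv V$. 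Since $\pv V$ is $\sigma$-reducible for the equation $x=y$, there are $\sigma$-words $w_n,z_n\in\oms AU$ with $\pv V\models w_n=z_n$ and $w_n\to u$, $z_n\to v$ in $\Om AU$. By the previous sentence each $(w_n,z_n)\in\tilde{\Sigma}$, and, $\tilde{\Sigma}$ being closed, $(u,v)=\lim_n(w_n,z_n)\in\tilde{\Sigma}$; that is, $u=v$ is provable from $\Sigma$. Hence $\Sigma$ is h-strong.

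The only genuinely delicate point is the structural fact of the first paragraph — that $\tilde{\Sigma}$ respects the implicit operations of $\sigma$ — as this is what makes equational derivations over $\sigma$ (in particular the congruence rule for $\sigma$-operations) simulable within the proof scheme; everything else is bookkeeping, relying on soundness (Proposition~\ref{p:gen-provable-inequalities}) and on the transfer of proofs along continuous homomorphisms. A secondary point requiring care is the exact reading of the $\sigma$-reducibility hypothesis: one needs the approximating $\sigma$-identities to converge to $u=v$ within $\Om AU$ (for $\pv U\in\{\pv S,\pv M\}$ this is the standard notion of reducibility), so that the concluding limiting step of the proof scheme indeed applies.
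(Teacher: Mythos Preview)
Your proof is correct and follows exactly the line the paper sketches in the paragraph preceding the proposition (which is why the proposition carries only a \qed). The paper treats the result as a ``simple observation'' once one knows that $\sigma$-reducibility approximates every \pv V-valid pseudoidentity by $\sigma$-identities valid in~\pv V, and that a basis of $\pv V^\sigma$ is provable from~$\Sigma$; you have written out what that observation actually requires, most notably the fact that $\tilde{\Sigma}$ is compatible with the operations of~$\sigma$ (via approximating each implicit operation by terms and using stability, transitivity, and closedness), so that equational deduction over~$\sigma$ can be simulated inside the proof scheme.

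Two minor comments. First, your handling of alphabets is right but could be stated a touch more cleanly: rather than invoking Proposition~\ref{p:transfer-up}, it suffices to note (by the same transfinite induction as in Proposition~\ref{p:gen-provable-inequalities}) that any continuous homomorphism $\Om BU\to\Om AU$ carries $\tilde{\Sigma}$ over $B$ into $\tilde{\Sigma}$ over $A$; this immediately gives that all $\sigma$-substitution instances of~$\Sigma'$ land in~$\tilde{\Sigma}$, which is what the Birkhoff step needs. Second, your closing caveat restricting~$\pv U$ to $\pv S$ or $\pv M$ is unnecessary: the paper's Proposition~\ref{p:reducibility} shows that $\sigma$-reducibility for $x=y$ is, for arbitrary~\pv U, exactly the density statement you use, so the limiting step goes through in general.
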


We apply below Proposition~\ref{p:general} to the usual bases of
pseudoidentities of several extensively studied pseudovarieties of
semigroups. Before doing so, it is worth explaining why we use the
above terminology introduced in~\cite{Almeida&Steinberg:2000a} that
has been widely adopted in the literature. This connection needs to be
clarified in order to justify invoking several published results that
are required in our application of Proposition~\ref{p:general}.

As above, fix an ambient pseudovariety \pv U. We also consider a
subpseudovariety \pv V of~\pv U. By a \emph{system of equations} we
mean a set $\Cl S$ whose elements $u=v$ are formal equalities (that
is, pairs) of terms in the algebraic signature of~\pv U. Let $X$ be
the set of variables that occur in~$\Cl S$, a set which, for simplicity,
we assume to be finite. We are interested in \emph{\pv V-solutions}
of~$\Cl S$ on a fixed but arbitrary finite set of generators $A$, which
consist of a mapping assigning to each variable $x\in X$ an element
$\varphi(x)$ of~\Om AU whose natural extension $\hat{\varphi}$ to
terms is such that \pv V satisfies each pseudoidentity
$\hat{\varphi}(u)=\hat{\varphi}(v)$ with $u=v$ a member from~$\Cl S$. Such
systems are often constrained by assigning to each variable $x\in X$ a
clopen subset $K_x$ of~\Om AU. The \pv V-solution $\varphi$ is said to
\emph{satisfy} the constraints if $\varphi(x)\in K_x$ for every $x\in
X$.

The key property of the pseudovariety \pv V introduced
in~\cite{Almeida&Steinberg:2000a} is the following. Let $\sigma$ be an
implicit signature, each of whose elements belongs to some \Om BU,
where $B$~is a finite set. We say that \pv V is
\emph{$\sigma$-reducible for~$\Cl S$} if, for every choice of constraints
for~$X$, if there is a \pv V-solution of~$\Cl S$ satisfying the
constraints, then there is such a solution taking its values in \oms
AU, which we call a \emph{$(\pv V,\sigma)$-solution}. Since the clopen
sets form a basis of the topology of~\Om AU, one immediately
recognizes the following result which is nothing but a topological
reformulation of the definition of reducibility. For that purpose, we
view solutions of the system $\Cl S$ as elements of the product space
$(\Om AU)^X$.

\begin{Prop}
  \label{p:reducibility}
  The pseudovariety \pv V is $\sigma$-reducible for the system of
  equations $\Cl S$ over the set of variables~$X$ if and only if the set
  of $(\pv V,\sigma)$-solutions is dense in the set of all \pv
  V-solutions.\qed
\end{Prop}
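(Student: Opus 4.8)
The plan is simply to unwind both definitions and observe that they say the same thing. Write $\mathrm{Sol}$ for the set of all \pv V-solutions of $\Cl S$, regarded (as the excerpt suggests) as a subset of the product space $(\Om AU)^X$, and put $\mathrm{Sol}^\sigma=\mathrm{Sol}\cap(\oms AU)^X$ for the set of $(\pv V,\sigma)$-solutions. A choice of constraints for~$X$ is a tuple $(K_x)_{x\in X}$ of clopen subsets of~$\Om AU$, and a solution $\varphi$ satisfies these constraints exactly when the corresponding point of $(\Om AU)^X$ lies in the box $\prod_{x\in X}K_x$. Thus $\sigma$-reducibility of \pv V for $\Cl S$ asserts precisely: for every such box, if $\mathrm{Sol}$ meets it, then $\mathrm{Sol}^\sigma$ meets it.

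The only ingredient needed is that the boxes $\prod_{x\in X}K_x$ with each $K_x$ clopen form a basis for the topology of $(\Om AU)^X$. This holds because $\Om AU$, being an inverse limit of finite discrete algebras, has a Stone space as underlying space, so its clopen sets form a basis (as recalled above); and $X$~is finite, so finite products of basic clopen sets of the factors form a basis of the product. Granting this, the two directions are immediate. If \pv V is $\sigma$-reducible for $\Cl S$, then for any nonempty relatively open subset $\mathrm{Sol}\cap W$ of $\mathrm{Sol}$ we choose $\varphi$ in it and a basic box with $\varphi\in\prod_{x\in X}K_x\subseteq W$; reducibility applied to the constraints $(K_x)$ produces a point of $\mathrm{Sol}^\sigma$ in the box, hence in $\mathrm{Sol}\cap W$, so $\mathrm{Sol}^\sigma$ is dense in $\mathrm{Sol}$. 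Conversely, if $\mathrm{Sol}^\sigma$ is dense in $\mathrm{Sol}$ and the constraints $(K_x)$ admit a \pv V-solution, then $\prod_{x\in X}K_x$ is a nonempty open set meeting $\mathrm{Sol}$, hence meets $\mathrm{Sol}^\sigma$, which yields a $(\pv V,\sigma)$-solution satisfying the constraints.

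There is no real obstacle: the statement is a reformulation, and the proof is the routine observation that density of a subset can be tested against a basis of the topology --- here the basis of clopen boxes, which is exactly the family over which the constraints in the definition of reducibility range. The only point requiring a word of care is to phrase density in terms of these basic clopen boxes rather than arbitrary open sets, which is legitimate precisely because they form a basis of $(\Om AU)^X$.
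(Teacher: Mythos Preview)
Your proposal is correct and matches the paper's approach exactly: the paper marks the proposition with \qed\ and merely remarks beforehand that, since the clopen sets form a basis of the topology of~\Om AU, the statement is ``nothing but a topological reformulation of the definition of reducibility''. Your write-up is just the routine unpacking of that one-line observation.
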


In particular, the terminology adopted at the beginning of the section
is consistent with that from~\cite{Almeida&Steinberg:2000a}.

The property of $\sigma$-reducibility of a pseudovariety \pv V was
conceived as part of a strong form of decidability called
\emph{$\sigma$-tameness}. The remaining requirements for
$\sigma$-tameness are computability assumptions, namely: the
pseudovariety \pv V is assumed to be recursively enumerable, the
signature $\sigma$ also recursively enumerable and to consist of
operations that are computable in elements of~\pv V, and the word
problem in \oms AV is supposed to be decidable. The computability
assumptions on the pseudovariety \pv V and the implicit signatures
considered in all our examples are immediately verified.

It should be pointed that the word problem in~\oms AV is equivalent to
the variety $\pv V^\sigma$ to admit a recursive basis of identities.
Besides $\sigma$-reducibility for the identity $x=y$, it is the
knowledge of such a basis that underlies all our applications of
Proposition~\ref{p:general}. Thus, our results rely more properly on
$\sigma$-tameness of~\pv V rather than just $\sigma$-reducibility for
the identity $x=y$.

We are now ready to present our concrete examples of evidence for the
conjecture obtained as applications of Proposition~\ref{p:general}. In
the following, the ambient pseudovariety \pv U will be either the
pseudovariety \pv S of all finite semigroups or the pseudovariety \pv
M of all finite monoids. The implicit signatures involved are often
either $\kappa=\{\_\cdot\_,\_\vphantom{|}^{\omega-1}\}$ or
$\{\_\cdot\_,\_\vphantom{|}^\omega\}$ the latter of which, by abuse of
notation, we also denote~$\omega$.

\subsection{Some simple examples}
\label{sec:A+R+LSl}

Our first example is given by the pseudoidentity
$x^{\omega+1}=x^\omega$, which defines the pseudovariety \pv A of all
finite aperiodic monoids. In view of Sch\"utzenberger's
characterization of star-free languages \cite{Schutzenberger:1965},
this is a very important pseudovariety.

\begin{Thm}
  \label{t:A}
  For $\pv U=\pv M$, the pseudoidentity $x^{\omega+1}=x^\omega$
  is h-strong.
\end{Thm}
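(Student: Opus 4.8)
The plan is to apply Proposition~\ref{p:general} with $\pv U=\pv M$, $\pv V=\pv A$, and the canonical implicit signature $\kappa=\{\_\cdot\_,\_\vphantom{|}^{\omega-1}\}$. Since every finite aperiodic monoid has trivial maximal subgroups, the operation $x^{\omega-1}$ agrees on $\pv A$ with the idempotent-power operation $x^\omega$; hence $\omc AM$ is simultaneously the free $\kappa$-algebra and the free $\omega$-algebra over $\pv A$ on the alphabet $A$, and $\pv A^\kappa$ coincides with the variety of $\omega$-algebras generated by $\pv A$. Two ingredients then remain: (a) $\pv A$ is $\kappa$-reducible for the equation $x=y$, that is, every pseudoidentity valid in $\pv A$ is a limit of $\kappa$-identities valid in $\pv A$; and (b) the variety $\pv A^\kappa$ admits a basis each of whose identities is provable from the single pseudoidentity $x^{\omega+1}=x^\omega$.

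Ingredient (a) is available from the literature on $\omega$-terms over $\pv A$: the word problem for such terms was solved by McCammond, and the structural analysis behind it yields the density of $\kappa$-solutions among all solutions of $x=y$; I would cite that work and its subsequent refinements. Granting~(a), Proposition~\ref{p:general}---or, unwinding its proof, a single limiting step placed on top of ingredient~(b)---reduces the theorem to the following statement: whenever $u$ and $v$ are $\kappa$-words over $A$ representing the same element of $\omc AM$, the pseudoidentity $u=v$ is provable from $x^{\omega+1}=x^\omega$. To prove this one fixes a presentation of $\omc AM$, for instance the terminating confluent rewriting system underlying McCammond's normal forms, and checks that each of its defining relations is provable from $x^{\omega+1}=x^\omega$. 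Since the proof scheme of Section~\ref{sec:scheme} is closed under substitution of pseudowords for variables (by full invariance, together with the homomorphisms already allowed at step~$0$), under plugging both sides of a provable pseudoidentity into a common context (by stability), and under limits, this gives provability of $u=v$ for all such $u,v$, and hence---taking the $u=v$ supplied by~(a)---h-strongness of $x^{\omega+1}=x^\omega$.

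Most of the relations in such a presentation are harmless. From $x^{\omega+1}=x^\omega$ one derives algebraically $x^{\omega+k}=x^\omega$ for each finite $k$, hence by a limiting step $x^{\omega+\omega}=x^\omega$, which is the absorption law $x^\omega x^\omega=x^\omega$; iterating this collapses nested powers, $(x^\omega)^\omega=x^\omega$; and the relations not involving $\omega$-powers are plain semigroup identities, provable algebraically. The substantive part---and the step I expect to be the main obstacle---is verifying that the remaining relations, namely those describing how concatenation interacts with nested $\omega$-powers (the rules that make the rewriting confluent), are also provable from $x^{\omega+1}=x^\omega$. Each of these verifications is a finite interleaving of algebraic rearrangements with limiting steps of the form ``replace a factor $w^{n!}$ by $w^\omega$'', but carrying them out uniformly over the rule schemes of the presentation is where the work concentrates; once it is done, Proposition~\ref{p:general} yields the theorem.
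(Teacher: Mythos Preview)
Your approach is the paper's approach: apply Proposition~\ref{p:general} with $\pv V=\pv A$ and the signature~$\omega$ (equivalently~$\kappa$, as you correctly observe). Two points deserve correction, though.

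First, the attribution for ingredient~(a) is off. McCammond's work solves the word problem for $\omega$-terms over~$\pv A$ and supplies the basis for the variety~$\pv A^\omega$; it does \emph{not} yield $\omega$-reducibility of~$\pv A$ for the equation $x=y$. Reducibility is a statement about approximating arbitrary pseudowords (not just $\omega$-terms) by $\omega$-terms with the same projection to~$\Om A A$, and it rests on Henckell's computation of $\pv A$-pointlike sets; the paper cites~\cite{Almeida&Costa&Zeitoun:2015b} for this deduction.

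Second, you have overestimated the difficulty of ingredient~(b). The paper does not work with the rewriting rules behind McCammond's normal forms but with his equational basis for~$\pv A^\omega$: the monoid axioms, the identities $(x^\omega)^\omega=(x^r)^\omega=x^\omega x^\omega=x^\omega$ for $r\ge2$, and $(xy)^\omega x=x(yx)^\omega$, all of which hold in \emph{every} finite monoid and so require no proof whatsoever within the ambient~$\pv M$; together with $x^\omega x=xx^\omega=x^\omega$, which \emph{is} the hypothesis $x^{\omega+1}=x^\omega$. So ingredient~(b) is a one-line observation, not the obstacle you anticipate; the ``confluence rules'' you worry about never enter.
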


\begin{proof}
  The first key ingredient here is that the pseudovariety \pv A is
  $\omega$-reducible for the equation $x=y$. This is proved
  in~\cite[Corollary~3.2]{Almeida&Costa&Zeitoun:2015b} based on
  Henckell's algorithm for the computation of \pv A-pointlike sets of
  finite monoids \cite{Henckell:1988,Henckell&Rhodes&Steinberg:2010}.

  The second key ingredient is a basis of identities for the variety
  $\pv A^\omega$ obtained by
  McCammond~\cite{McCammond:1999a,Almeida&Costa&Zeitoun:2015}. It
  consists of the following identities:
  \begin{align*}
    &(xy)z=x(yz),\ x1=1x=x\\
    &(x^\omega)^\omega
      =(x^r)^\omega
      =x^\omega x^\omega
      =x^\omega\ (r\ge2)\\
    &(xy)^\omega x=x(yx)^\omega\\
    &x^\omega x=xx^\omega=x^\omega
  \end{align*}
  Except for the identities in the last line, which are immediately
  provable from $x^{\omega+1}=x^\omega$, all the other identities are
  valid in all finite monoids and so they require no proof in our
  proof setup within the ambient pseudovariety~\pv M. Applying
  Proposition~\ref{p:general}, we conclude that the pseudoidentity
  $x^{\omega+1}=x^\omega$ is h-strong.
\end{proof}

Our next example is the usual basis of the pseudovariety \pv R of all
finite \Cl R-trivial monoids.

\begin{Thm}
  \label{t:R}
  For $\pv U=\pv M$, the pseudoidentity $(xy)^\omega x=(xy)^\omega$
  is h-strong.
\end{Thm}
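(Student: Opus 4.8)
The plan is to apply Proposition~\ref{p:general} in exactly the same way as in the proof of Theorem~\ref{t:A}, with \pv A replaced by~\pv R and the signature~$\omega$ retained. So I need two ingredients: (i) that \pv R is $\omega$-reducible for the equation $x=y$, and (ii) a basis of identities for the variety $\pv R^\omega$ all of whose members are provable from the single pseudoidentity $(xy)^\omega x=(xy)^\omega$ within the ambient pseudovariety~\pv M.

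For ingredient~(i), I would invoke the literature: \pv R is known to be $\kappa$-reducible, indeed $\kappa$-tame, and in particular $\omega$-reducible for systems of equations (work of Almeida, and of Almeida--Costa--Zeitoun on \Cl R-trivial monoids); since $\omega$-reducibility for the single equation $x=y$ is the weakest such property, it holds. I would cite the appropriate reference for this, parallel to the citation of \cite{Almeida&Costa&Zeitoun:2015b} in Theorem~\ref{t:A}.

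For ingredient~(ii), I would recall an explicit basis of identities for $\pv R^\omega$. The natural candidate is the associativity and unit laws, together with the standard $\omega$-power laws $(x^\omega)^\omega=(x^r)^\omega=x^\omega x^\omega=x^\omega$ (for $r\ge2$), plus the defining law $(xy)^\omega x=(xy)^\omega$ itself (or an equivalent finite set of consequences of it, such as $x^\omega x=x^\omega$). As in Theorem~\ref{t:A}, all the identities valid in every finite monoid require no proof in our scheme; the remaining ones must be checked to be provable from $(xy)^\omega x=(xy)^\omega$. The law $x^{\omega+1}=x^\omega$ follows by substituting $x$ for $y$, and then the pure $\omega$-power identities are algebraically provable from $x^{\omega+1}=x^\omega$, exactly as noted in the proof of Theorem~\ref{t:A}. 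Applying Proposition~\ref{p:general} then yields h-strongness.

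The main obstacle is pinning down ingredient~(ii): one must be sure that the cited basis of identities for $\pv R^\omega$ is correct and complete, and that every identity in it other than the monoid-universal ones is indeed derivable from $(xy)^\omega x=(xy)^\omega$ by an algebraic proof (finitely many transitivity steps and substitutions), possibly after first establishing $x^{\omega+1}=x^\omega$. If the published basis for $\pv R^\omega$ is phrased differently (e.g.\ in terms of $\omega-1$ rather than $\omega$, or with extra generators), a short lemma showing equivalence of that basis with the one above, all within provability from $(xy)^\omega x=(xy)^\omega$, would be needed; this is routine but must be carried out carefully.
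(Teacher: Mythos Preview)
Your approach is correct and is exactly the paper's: apply Proposition~\ref{p:general} using $\omega$-reducibility of~\pv R for $x=y$ together with a basis of~$\pv R^\omega$ whose identities are provable from $(xy)^\omega x=(xy)^\omega$. Two small points where your sketch is imprecise compared with the paper's execution. First, the published basis of~$\pv R^\omega$ (from Almeida--Zeitoun) contains, besides the monoid-universal identities and $(xy)^\omega x=(xy)^\omega$, also $(xy)^\omega x^\omega=(xy)^\omega$ and $(xy)^\omega=x(yx)^\omega$; your ``natural candidate'' basis omits the former and is not known to suffice. Second, deriving $(xy)^\omega x^\omega=(xy)^\omega$ from the hypothesis is not purely algebraic: one iterates to get $(xy)^\omega x^{n!}=(xy)^\omega$ and then takes a limit, so the derivation lands in~$\Sigma_2$ rather than~$\Sigma_1$. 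This is harmless for h-strongness but contradicts your expectation that an algebraic proof suffices.
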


\begin{proof}
  The $\omega$-reducibility of~\pv R for the equation $x=y$ was first
  proved in~\cite{Almeida&Costa&Zeitoun:2004}. In fact, the same holds
  for arbitrary systems of $\omega$-equations
  \cite{Almeida&Costa&Zeitoun:2005b}. The following basis of
  identities for the variety $\pv R^\omega$ was obtained
  in~\cite{Almeida&Zeitoun:2003b}:
  \begin{align*}
    &(x^r)^\omega=(x^\omega)^\omega=x^\omega\ (r\ge2)\\
    &(xy)z=x(yz),\ x1=1x=x \\
    &(xy)^\omega
      =(xy)^\omega x
      =(xy)^\omega x^\omega=x(yx)^\omega.
  \end{align*}
  Of all the above identities the only one that is not obviously
  provable from $(xy)^\omega x=(xy)^\omega$ is %
  $(xy)^\omega x^\omega=(xy)^\omega$. Yet, iterating the hypothesis,
  one gets $(xy)^\omega x^{n!}=(xy)^\omega$, whence also $(xy)^\omega
  x^\omega=(xy)^\omega$ by taking limits.
\end{proof}

While the two previous examples could be reformulated in the language
of semigroups, as the pseudovariety of semigroups generated in each
case has no additional monoids, for the next one the monoids in the
pseudovariety constitute a much smaller class. Indeed, we now consider
the pseudovariety \pv{LSl} of all finite semigroups which are locally
semilattices, that is, the pseudovariety defined by the set in the
following result.

\begin{Thm}
  \label{t:LSl}
  For $\pv U=\pv S$, the set of pseudoidentities
  $$\Sigma=\{x^\omega yx^\omega zx^\omega=x^\omega zx^\omega yx^\omega,
  x^\omega yx^\omega yx^\omega=x^\omega yx^\omega\}$$
  is h-strong.
\end{Thm}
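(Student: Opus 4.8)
The plan is to apply Proposition~\ref{p:general} exactly as in the proofs of Theorems~\ref{t:A} and~\ref{t:R}, so the proof reduces to assembling two ingredients: $\kappa$-reducibility (or $\omega$-reducibility) of the pseudovariety $\pv{LSl}$ for the equation $x=y$, and a basis of identities for the variety $\pv{LSl}^\kappa$ all of whose members are provable from~$\Sigma$ within the ambient pseudovariety~$\pv S$. First I would locate in the literature the reducibility statement: $\pv{LSl}$ is known to be $\kappa$-tame (this is in the work of Costa and collaborators on locally trivial and locally semilattice pseudovarieties, building on the tameness results for $\pv J$ and the general transfer techniques for $\pv{LV}$ from a tame~$\pv V$), and in particular $\kappa$-reducible for systems of equations, hence a fortiori for the single equation $x=y$. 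That gives the first key ingredient.

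Next I would write down the known basis of $\kappa$-identities for $\pv{LSl}^\kappa$ — equivalently, solve the word problem in the free $\kappa$-semigroup of $\pv{LSl}^\kappa$ on each finite alphabet. The expected shape of such a basis is the associativity law, the idempotency/absorption laws for the $\omega$-power ($(x^\omega)^\omega=x^\omega$, $x^\omega x^\omega=x^\omega$, $(x^r)^\omega=x^\omega$ for $r\ge2$, and $x^{\omega+1}=x^\omega x=x x^\omega$ being false here — rather we keep $x^\omega$ as a new symbol and the relevant laws are those valid in all finite semigroups plus the two defining $\kappa$-identities of $\pv{LSl}$ suitably saturated), together with the two ``local semilattice'' laws
$$x^\omega y x^\omega z x^\omega = x^\omega z x^\omega y x^\omega,\qquad
  x^\omega y x^\omega y x^\omega = x^\omega y x^\omega$$
and perhaps some consequences needed to make the list a genuine basis for the variety of $\kappa$-semigroups. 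The point of Proposition~\ref{p:general} is that every identity on that list which already holds in \emph{all} finite semigroups needs no proof in our scheme (its two sides are equal in $\Om A{\pv S}$, hence trivially $\tilde\Sigma$-related), so the only identities requiring attention are the two displayed ones — which \emph{are} the elements of~$\Sigma$, hence algebraically provable from~$\Sigma$ in one step — together with any ``commutation/idempotency'' $\kappa$-identities from the basis that are not already laws of $\pv S$; for each of those I would give a short algebraic-plus-limit derivation from~$\Sigma$, of exactly the kind carried out repeatedly in the proof of Proposition~\ref{p:t-strong} (e.g. iterate a hypothesis to get $x^{n!}$-versions and pass to the limit). Once every identity of the basis is seen to be provable from~$\Sigma$, Proposition~\ref{p:general} yields that $\Sigma$ is h-strong.

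The main obstacle is entirely bibliographic-and-bookkeeping in nature: one must cite a correct published statement of the $\kappa$-reducibility (for $x=y$, which is the weakest form) of $\pv{LSl}$, and one must have an explicit, correct basis of identities for $\pv{LSl}^\kappa$ — if the literature only provides a basis ``modulo the laws of $\pv S$'' one should state it that way, which is in fact exactly what our setup wants. The only genuinely mathematical checking is verifying that each non-trivial basis identity beyond the two in $\Sigma$ is derivable from $\Sigma$ by finitely many algebraic steps and limit steps; experience with Theorems~\ref{t:A} and~\ref{t:R} and with Proposition~\ref{p:t-strong} strongly suggests these derivations are routine (iterate, then take limits of $n!$-powers), so no essential difficulty is expected there.
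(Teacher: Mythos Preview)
Your approach is essentially identical to the paper's: invoke $\omega$-reducibility of $\pv{LSl}$ for $x=y$ (Costa--Teixeira), quote the known basis of $\pv{LSl}^\omega$ (Costa--Nogueira), and check that each basis identity not already valid in all of $\pv S$ is provable from~$\Sigma$, then apply Proposition~\ref{p:general}. The one concrete point you would discover upon consulting the basis is that, besides the two pseudoidentities of~$\Sigma$ themselves and the laws valid in~$\pv S$, there is exactly one further identity to handle, namely $(xy^\omega z)^\omega=(xy^\omega z)^2$; its derivation from the second member of~$\Sigma$ is precisely the iterate-then-take-limits routine you anticipate (substitute to get $(xy^\omega z)^3=(xy^\omega z)^2$, hence $(xy^\omega z)^{n!}=(xy^\omega z)^2$, then pass to the limit).
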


\begin{proof}
  The $\omega$-reducibility of~\pv{LSl} for the equation $x=y$ was
  first proved in~\cite{Costa&Teixeira:2004}, where graph systems of
  equations are also considered.\footnote{Later, reducibility
    of~\pv{LSl} was extended to arbitrary systems of
    $\kappa$-equations \cite{Costa&Nogueira:2009}. In view of the
    well-known decomposition $\pv{LSl}=\pv{Sl}*\pv D$ (see, for
    instance, \cite[Section~10.8]{Almeida:1994a}), a generalization in
    a different direction has been obtained
    in~\cite{Costa&Nogueira&Teixeira:2016} where, in particular, it is
    proved that if \pv V is $\kappa$-reducible for the equation $x=y$,
    then so is $\pv V*\pv D$. This was later extended to the case of
    graph systems of equations
    in~\cite{Costa&Nogueira&Teixeira:2017}.} The following basis of
  identities for the variety \pv{LSl} may be found
  in~\cite{Costa&Nogueira:2008}:
  \begin{align*}
    &(x^r)^\omega
      =x^\omega x^\omega
      =x^\omega x
      =x^\omega \ (r\ge2)\\
    &(xy)z=x(yz),\
      (xy)^\omega x=x(yx)^\omega \\
    &x^\omega yx^\omega zx^\omega=x^\omega zx^\omega yx^\omega,
      x^\omega yx^\omega yx^\omega=x^\omega yx^\omega \\
    &(xy^\omega z)^\omega=(xy^\omega z)^2.
  \end{align*}
  Of all the above identities, the only one that requires a proof
  from~$\Sigma$ is the last one. We first note that the second
  pseudoidentity from~$\Sigma$ immediately infers %
  $(xy^\omega z)^3=(xy^\omega z)^2$, which entails %
  $(xy^\omega z)^{n+1}=(xy^\omega z)^n$ for every $n\ge2$, whence also
  $(xy^\omega z)^{n!}=(xy^\omega z)^2$. Taking limits, we may further
  prove $(xy^\omega z)^\omega=(xy^\omega z)^2$. In view of
  Proposition~\ref{p:general}, this concludes the proof of the
  theorem.
\end{proof}

\subsection{Monoids in which regular elements are idempotents}
\label{sec:DA}

The next example is given by two simple bases of pseudoidentities for
the pseudovariety \pv{DA} of all finite semigroups whose regular \Cl
D-classes are aperiodic subsemigroups, a property which is equivalent
to all regular elements being idempotents.

\begin{Thm}
  \label{t:DA}
  For $\pv U=\pv M$, the sets of pseudoidentities
  \begin{align*}
    \Sigma&=\{x^{\omega+1}=x^\omega,
              (xy)^\omega(yx)^\omega(xy)^\omega=(xy)^\omega\}\\
    \Gamma&=\{((xy)^\omega x)^2=(xy)^\omega x\}
  \end{align*}
  are h-strong.
\end{Thm}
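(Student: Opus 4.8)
The plan is to derive both assertions from Proposition~\ref{p:general}, applied with the implicit signature $\sigma=\omega$ and the pseudovariety $\pv V=\pv{DA}$, after first showing that $\Sigma$ and $\Gamma$ are equivalent in the sense of Section~\ref{sec:t-strong}; this reduces the work to a single basis. For the equivalence: substituting $y\mapsto x$ in $((xy)^\omega x)^2=(xy)^\omega x$ and computing in $\Om 1M$ (where $(x^2)^\omega=x^\omega$ and $2\omega=\omega$) turns the hypothesis of~$\Gamma$ into the pseudoidentity $x^{\omega+2}=x^{\omega+1}$; multiplying both sides on the right by the pseudoword $x^{\omega-1}$ then yields $x^{\omega+1}=x^\omega$, so $\Gamma$ proves $x^{\omega+1}=x^\omega$ by a single step-$0$ move. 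Multiplying $((xy)^\omega x)^2=(xy)^\omega x$ on the right by~$y$ and using the previously obtained $x^{\omega+1}=x^\omega$ (specialized via $x\mapsto xy$) gives $(xy)^\omega x(xy)^\omega=(xy)^\omega$. From this, aperiodicity, and the $\pv M$-valid identities $x(yx)^\omega=(xy)^\omega x$, $y(xy)^\omega=(yx)^\omega y$ and $(yx)^\omega=y(xy)^{\omega-1}x$, one obtains $(xy)^\omega(yx)^\omega(xy)^\omega=(xy)^\omega$ by a short chain: multiply $(xy)^\omega x(xy)^\omega=(xy)^\omega$ on the left by~$y$ to get $(yx)^\omega(xy)^\omega=(yx)^\omega y$; multiply that on the right by~$x$ and rename variables to get the $y$-sandwich form $(xy)^\omega y(xy)^\omega=(xy)^\omega$; then rewrite $(xy)^\omega(yx)^\omega(xy)^\omega$ via $(yx)^\omega=y(xy)^{\omega-1}x$ and collapse it to $(xy)^\omega$ using the $y$-sandwich identity together with the intermediate relations already obtained. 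Hence $\Gamma$ proves~$\Sigma$. Conversely, once $\Sigma$ proves $(xy)^\omega x(xy)^\omega=(xy)^\omega$ (see below), $\Sigma$ also proves $((xy)^\omega x)^2=\bigl((xy)^\omega x(xy)^\omega\bigr)x=(xy)^\omega x$, so $\Sigma$ proves~$\Gamma$. Since equivalent sets of pseudoidentities define the same pseudovariety and any proof from one converts step by step into a proof from the other, it now suffices to show that $\Sigma$ is h-strong.

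To apply Proposition~\ref{p:general} we need two ingredients from the literature. First, that $\pv{DA}$ is $\omega$-reducible for the equation $x=y$; this is part of the known $\omega$-tameness of~$\pv{DA}$, and plays here the role that $\omega$-reducibility of~$\pv A$ and Henckell's algorithm play in Theorem~\ref{t:A}. Second, a finite basis of identities for the variety $\pv{DA}^\omega$; such a basis appears in the literature and consists of the monoid axioms, the standard laws governing $\omega$-powers ($(x^r)^\omega=(x^\omega)^\omega=x^\omega x^\omega=x^\omega$ for $r\ge 2$, and $(xy)^\omega x=x(yx)^\omega$), the aperiodicity laws $x^\omega x=xx^\omega=x^\omega$, and one $\pv{DA}$-specific identity which, modulo the others, is equivalent to $(xy)^\omega x(xy)^\omega=(xy)^\omega$. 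It remains to check that $\Sigma$ proves every identity of this basis: the monoid axioms and the $\omega$-power laws are valid in all finite monoids, so their two sides are the same element of~$\Om AM$ and nothing needs to be proved; the aperiodicity laws are immediate from $x^{\omega+1}=x^\omega\in\Sigma$; and the $\pv{DA}$-specific identity is obtained from the second member of~$\Sigma$ and $x^{\omega+1}=x^\omega$ by running the conversion of the first paragraph in the reverse direction (the passage between $(xy)^\omega(yx)^\omega(xy)^\omega=(xy)^\omega$ and $(xy)^\omega x(xy)^\omega=(xy)^\omega$ works both ways). Proposition~\ref{p:general} then gives that $\Sigma$, hence also~$\Gamma$, is h-strong.

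The genuine content lies in two places. The first is pinning down the exact published statements of the $\omega$-reducibility of~$\pv{DA}$ for $x=y$ and of a finite basis of $\pv{DA}^\omega$; these are used here as black boxes, exactly as McCammond's basis and Henckell's algorithm are in Theorem~\ref{t:A}. The second, which I expect to be the main obstacle in the write-up, is the equational bookkeeping behind the equivalence of $\Sigma$ and $\Gamma$ and the two-way conversion between the forms of the $\pv{DA}$ identity. None of these derivations is deep, but, precisely as in the proof of Proposition~\ref{p:t-strong}\,(\ref{item:DA-t-strong})--(\ref{item:DS-t-strong}), each one is a careful sequence of substitutions and multiplications by suitably chosen pseudowords, and at every step one must verify that the move used is admissible in the proof scheme---a step-$0$ move, an appeal to transitivity, or an appeal to stability or full invariance of~$\tilde{\Sigma}$ (or a topological-closure step, applied only to already-proved pseudoidentities; in fact no limiting step seems to be needed here).
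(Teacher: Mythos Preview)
Your overall strategy---apply Proposition~\ref{p:general} via $\omega$-reducibility of $\pv{DA}$ and a basis of $\pv{DA}^\omega$, after checking that $\Sigma$ and $\Gamma$ are equivalent---is exactly the paper's, and your equivalence calculations match the paper's in spirit. The gap is in the second paragraph: the basis of $\pv{DA}^\omega$ that is actually available in the literature (Almeida--Kufleitner) is \emph{not} given by a single $\pv{DA}$-specific identity. Its $\pv{DA}$-specific part is the infinite schema
\[
u^\omega v\,u^\omega=u^\omega
\]
ranging over all $\omega$-terms $u,v$ with every variable of $v$ occurring in $u$. Your assertion that this schema is, ``modulo the others,'' equivalent to the single identity $(xy)^\omega x(xy)^\omega=(xy)^\omega$ is unjustified: in the $\omega$-term equational logic there is no evident way to obtain, for instance, $(xyz)^\omega y(xyz)^\omega=(xyz)^\omega$ from the two-variable identity and the standard $\omega$-laws alone, and the paper's derivation of this three-variable instance (its claim~(iii)) passes through pseudowords such as $(xy)^{\omega-1}$ that are not $\omega$-terms. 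Consequently, the real work is not ``pinning down the exact published statements,'' but actually proving the full schema $u^\omega v u^\omega=u^\omega$ from $\Sigma$ in the proof scheme; the paper does this via three auxiliary claims (closure under squaring of the ``absorbed'' factor, closure under concatenation, and the three-variable base case), and you have skipped all of it.

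A second, related point: your remark that ``no limiting step seems to be needed here'' is incorrect. In proving the schema, from $e\,we=e$ one first gets $ew=(ew)^n$ for every $n$ and then passes to $ew=(ew)^\omega$ by taking limits; this topological-closure step is essential in the paper's argument and you should not expect to avoid it.
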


\begin{proof}
  The $\omega$-reducibility of~\pv{DA} for the equation~$x=y$ has been
  proved in~\cite{Almeida&Costa&Zeitoun:2015b}. The following basis of
  identities for~$\pv{DA}^\omega$ has been recently
  obtained~\cite{Almeida&Kufleitner:2017}:
  \begin{align*}
    &(x^r)^\omega=(x^\omega)^\omega=x^\omega\ (r\ge2); \\
    &(xy)^\omega x=x(yx)^\omega; \\
    &(xy)z=x(yz),\ x1=1x=x \\
    &u^\omega vu^\omega=u^\omega,
  \end{align*}
  where $u,v\in\Om BU$ for an arbitrary finite alphabet $B$ and every
  variable that occurs in $v$ also occurs in~$u$.
  Only the family of identities in the last line requires a proof
  since the others are valid in every finite monoid.

  We start by showing that $\Sigma$ proves~$\Gamma$. We may prove
  algebraically:
  \begin{align*}
    (xy)^\omega %
    &=(xy)^\omega(yx)^\omega(xy)^\omega %
      =(xy)^\omega(yx)^{\omega+1}(xy)^\omega\\
    &=(xy)^\omega y\cdot(xy)^\omega\cdot x(xy)^\omega %
      =\cdots %
      =((xy)^\omega y)^{n!}(xy)^\omega (x(xy)^\omega)^{n!}.
  \end{align*}
  Taking limits, we get
  \begin{equation}
    (xy)^\omega
    =((xy)^\omega y)^\omega(xy)^\omega(x(xy)^\omega)^\omega
    \label{eq:DA-1}
  \end{equation}
  which, upon multiplication of both sides on the right by
  $x(xy)^\omega x$, yields
  \begin{align*}
    ((xy)^\omega x)^2 %
    &=((xy)^\omega y)^\omega(xy)^\omega(x(xy)^\omega)^{\omega+1}x \\
    &=((xy)^\omega y)^\omega(xy)^\omega(x(xy)^\omega)^\omega x
      \stackrel[\eqref{eq:DA-1}]{}=(xy)^\omega x.
  \end{align*}
  
  Conversely, we may algebraically prove $\Sigma$ from $\Gamma$ as
  follows. First, substituting $x$ for $y$ in the pseudoidentity
  $((xy)^\omega x)^2=(xy)^\omega x$ and multiplying both sides by
  $x^{\omega-1}$, we obtain
  $$x^{\omega+1}
  =x^{\omega-1}((xx)^\omega x)^2 %
  =x^{\omega-1}(xx)^\omega x %
  =x^\omega.$$
  Also, multiplying both sides of the pseudoidentity from~$\Gamma$
  on the right by $y(xy)^{\omega-1}$ yields %
  \begin{equation}
    (xy)^\omega x(xy)^\omega=(xy)^\omega.
    \label{eq:DA-2}
  \end{equation}
  Next, we obtain
  \begin{align*}
    (xy)^\omega
    &=(xy)^{\omega-1} xy(xy)^\omega
      =(xy)^{\omega-1} x(yx)^\omega y
      \stackrel[\eqref{eq:DA-2}]{}=(xy)^{\omega-1} x(yx)^\omega y(yx)^\omega y \\
    &=(xy)^\omega (yx)^\omega y
      =(xy)^\omega y(xy)^\omega,
  \end{align*}
  and so
  \begin{equation}
    (xy)^\omega y(xy)^\omega=(xy)^\omega,
    \label{eq:DA-3}
  \end{equation}
  which entails
  \begin{align*}
    (xy)^\omega
    &\stackrel[\eqref{eq:DA-3}]{}=(xy)^\omega y(xy)^\omega
      \stackrel[\eqref{eq:DA-2}]{}=(xy)^\omega y(xy)^\omega x(xy)^\omega \\
    &=(xy)^\omega (yx)^{\omega+1}(xy)^\omega
      =(xy)^\omega (yx)^\omega(xy)^\omega.
  \end{align*}

  From hereon, we are thus allowed to use $\Sigma\cup\Gamma$ as
  hypothesis in our proofs and we assume them without further mention.
  Recall that our objective is to prove that every $\omega$-identity
  of the last form $u^\omega vu^\omega=u^\omega$ under the above
  assumptions on $u$ and~$v$. Since $u$ and $v$ may expressed as limits of
  sequences of words with constant content, we may as well assume that
  both $u$ and $v$ are words.

  To simplify the notation we let $e=u^\omega$. We claim that the
  following statements hold:
  \begin{enumerate}[(i)]
  \item\label{item:DA-1} if $ewe=e$ may be proved, then $ew^2e=e$ may
    also be proved;
  \item\label{item:DA-2} if $eve=e=ewe$ may be proved, then $evwe=e$
    may also be proved;
  \item\label{item:DA-3} the pseudoidentity $(xyz)^\omega
    y(xyz)^\omega=(xyz)^\omega$ is provable.
  \end{enumerate}
  To establish (\ref{item:DA-1}), assume that we have proved $ewe=e$.
  Then we may also prove $ew=ewew=\cdots=(ew)^n$ and, taking limits,
  $ew=(ew)^\omega=(ew)^{\omega-1}$. Similarly, we can prove
  $we=(we)^\omega$. This yields the following equalities:
  $$e
  =ewe
  =(ew)^\omega e
  =(ew)^\omega(we)^\omega(ew)^\omega e
  =ew we ew e
  =ew^2 e.$$
  For~(\ref{item:DA-2}), suppose we have proved $eve=e=ewe$. Then, in
  view of~(\ref{item:DA-1}), we may prove
  $$e
  =ee =eweeve =e\cdot wev\cdot e =e\cdot wev\cdot wev\cdot e =evwe.$$
  To prove (\ref{item:DA-3}), observe first that, from~\eqref{eq:DA-2}
  and~\eqref{eq:DA-3} we deduce that the pseudoidentities
  $$(yzx)^\omega
  =(yzx)^\omega yz(yzx)^\omega
  =(yzx)^\omega y(yzx)^\omega
  =(yzx)^\omega x(yzx)^\omega$$
  are provable, so that, by~(\ref{item:DA-2}), so are
  $(yzx)^\omega=(yzx)^\omega yx(yzx)^\omega$ and
  \begin{align*}
    (xyz)^\omega
    &=(xyz)^{\omega+1}
      =x(yzx)^\omega yz
      =x(yzx)^\omega yzyx(yzx)^\omega yz \\
    &=(xyz)^{\omega+1}y(xyz)^{\omega+1}
      =(xyz)^\omega y(xyz)^\omega.
  \end{align*}
  Let $v=v_1\cdots v_n$, where the $v_i$ are letters. By
  (\ref{item:DA-3}) since every letter appearing in~$v$ also appears
  in~$u$, we may prove $ev_ie=e$ ($i=1,\ldots,n$). Applying
  (\ref{item:DA-2}) $n-1$ times, we deduce that we may also prove
  $eve=e$, as required.
\end{proof}

\subsection{\texorpdfstring{\Cl J}{J}-trivial monoids}
\label{sec:J}

The next example consists of the two bases commonly used to describe
the pseudovariety \pv J of all finite \Cl J-trivial semigroups.

\begin{Thm}
  \label{t:J}
  For $\pv U=\pv M$, the sets of pseudoidentities
  $$\Sigma=\{x^{\omega+1}=x^\omega, (xy)^\omega=(yx)^\omega\}
  \text{ and }%
  \Gamma=\{(xy)^\omega x=(xy)^\omega=y(xy)^\omega\}$$
  are h-strong.
\end{Thm}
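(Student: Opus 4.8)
The plan is to apply Proposition~\ref{p:general} to each of the two bases, which reduces the theorem to two ingredients: the $\omega$-reducibility of~\pv J for the equation $x=y$, and a basis of $\omega$-identities for the variety $\pv J^\omega$ each of whose members is provable from $\Sigma$ (respectively from $\Gamma$). The first ingredient is available in the literature (Almeida's work on the word problem for $\omega$-terms over~\pv J, or the stronger tameness results), so the real content lies in the second. For a basis of $\pv J^\omega$ one may take the associativity and identity-element laws, the standard laws $(x^r)^\omega=(x^\omega)^\omega=x^\omega$ for $r\ge2$ together with $x^\omega x=xx^\omega=x^\omega$, the commutation law $(xy)^\omega=(yx)^\omega$, and the crucial family asserting that $u^\omega v u^\omega=u^\omega$ (equivalently $u^\omega v^\omega=u^\omega$ when $v$ is a product of letters each occurring in~$u$), analogous to what was used for \pv{DA}. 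Since the finite-monoid-valid identities need no proof in our setup, the work is to prove from each basis the periodicity laws of the last line and, for $\Gamma$, the commutation law.

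First I would dispatch the equivalence of $\Sigma$ and $\Gamma$ by algebraic proofs. Starting from $\Gamma$, substituting $y$ for~$x$ (or $x$ for $y$) in $(xy)^\omega x=(xy)^\omega$ and using $y(xy)^\omega=(yx)^\omega y$ gives $x^{\omega+1}=x^\omega$ after multiplying by a suitable power of~$x$, exactly as in the proof of Theorem~\ref{t:DA}; and from $(xy)^\omega x=(xy)^\omega=y(xy)^\omega$ together with $x^{\omega+1}=x^\omega$ one gets $(xy)^\omega=(xy)^\omega x=x(xy)^{\omega-1}\cdot xy=x(yx)^\omega=\cdots=(yx)^\omega$ by the usual manipulations, yielding the commutation law. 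Conversely, from $\Sigma$ one proves $(xy)^\omega x=x(yx)^\omega=x(xy)^{\omega}\cdot$(rearranged)$=\cdots$; more cleanly, $(xy)^\omega x=(xy)^\omega(xy)^\omega x=(xy)^\omega x(yx)^\omega$ and using commutation and $x^{\omega+1}=x^\omega$ collapses this to $(xy)^\omega$, and dually $y(xy)^\omega=(xy)^\omega$. By Corollary~\ref{c:transfer-up}-style reasoning (or just directly, since provability is transitive) it then suffices to establish h-strongness for one of the two bases, say $\Sigma$.

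The main step is then to prove, from $\Sigma$, every identity of the form $u^\omega v u^\omega=u^\omega$ where every letter of~$v$ occurs in~$u$. As in the \pv{DA} argument I would first reduce to the case where $u$ and $v$ are words (using that they are limits of words of constant content and that limiting steps are available). Writing $e=u^\omega$, I would establish the auxiliary facts: (a) from $\Sigma$ one proves $e x e=e$ for every letter $x$ occurring in $u$ --- this is where commutativity of $\omega$-powers does the heavy lifting, since $u=x_1\cdots x_k w$ allows rotating $x$ to the front or back of a power of~$u$; (b) if $ewe=e$ is provable then so is $ew^2e=e$ (same argument as item (i) in the \pv{DA} proof: $ew=(ew)^\omega$, $we=(we)^\omega$, then $e=ewe=(ew)^\omega(we)^\omega e=ew^2e$); and (c) if $eve=e=ewe$ are provable then $evwe=e$ is provable. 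Given (a)--(c), writing $v=v_1\cdots v_n$ as a product of letters, each $ev_ie=e$ is provable by (a), and iterating (c) yields $eve=e$. The subtle point --- and the step I expect to be the main obstacle --- is (a): unlike in \pv{DA}, here one must genuinely exploit $(xy)^\omega=(yx)^\omega$ to move an arbitrary letter of $u$ into cancelling position, and one must be careful that the rotation arguments stay within what $\Sigma$ proves (using the idempotency $u^\omega u^\omega=u^\omega$, the law $u^\omega u=u^\omega$, and commutation of $\omega$-powers of conjugate words). Once (a)--(c) are in hand, Proposition~\ref{p:general} finishes the proof for both $\Sigma$ and~$\Gamma$.
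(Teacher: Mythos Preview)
Your overall plan---invoke Proposition~\ref{p:general} together with $\omega$-reducibility of~\pv J, and then prove a basis of $\pv J^\omega$ from each of $\Sigma$ and $\Gamma$---is the same as the paper's. However, the execution diverges at the choice of basis, and this is worth flagging.

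The paper does not use the \pv{DA}-style family $u^\omega v u^\omega=u^\omega$. Instead it invokes the basis for $\pv J^\omega$ from \cite[Section~8.2]{Almeida:1994a}, whose only identity not valid in all finite monoids and not already in $\Sigma$ is the single law $(xy)^\omega=(x^\omega y^\omega)^\omega$. The bulk of the paper's argument is a direct derivation of this one identity from~$\Sigma$: first $(xy)^\omega=x^m(xy)^\omega y^m$ and its rotated variant, combined and iterated to reach $(xy)^\omega=(x^\omega y^\omega)^\omega(xy)^\omega(x^\omega y^\omega)^\omega$, and then the reverse absorption $(x^\omega y^\omega)^\omega=(xy)^\omega(x^\omega y^\omega)^\omega$, both via limiting steps. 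For $\Gamma$, the paper does not develop a parallel argument at all; it simply observes that $\Gamma$ algebraically proves~$\Sigma$ in three lines ($(xy)^\omega=y(xy)^\omega=(yx)^\omega y=(yx)^\omega$, and $x^{\omega+1}=x^\omega$ by substitution), so the $\Sigma$ case suffices.

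Your route is not wrong, but two points need attention. First, you assert that the \pv{DA}-type family together with commutation and aperiodicity is a basis of $\pv J^\omega$ without citation or proof; this is true (it implies the cited basis, as one checks by deriving $(xy)^\omega x=(xy)^\omega$ from $(xy)^\omega x(xy)^\omega=(xy)^\omega$ and commutation, and then following the paper's derivation), but it is an extra step you would have to supply. Second, your sketch of the mutual provability of $\Sigma$ and $\Gamma$ is garbled in places (the chain for $(xy)^\omega x$ from $\Sigma$ does not collapse as written; one actually needs a limiting step, for instance $(xy)^\omega=x(xy)^\omega y$ iterated to $(xy)^\omega=x^\omega(xy)^\omega y^\omega$ and then $x(xy)^\omega=x^{\omega+1}(xy)^\omega y^\omega=x^\omega(xy)^\omega y^\omega=(xy)^\omega$). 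The paper sidesteps this by only needing the easy direction $\Gamma\Rightarrow\Sigma$. In short: your approach works but is more laborious; the paper's choice of the minimal cited basis for $\pv J^\omega$ makes the argument considerably shorter.
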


\begin{proof}
  It is not difficult to show that \pv J is $\omega$-reducible for the
  equation $x=y$. In fact, \pv J is $\omega$-reducible for all finite
  systems of equations, and even of $\kappa$-equations
  \cite[Theorem~12.3]{Almeida:2002a}. On the other hand, the following
  basis of identities for the variety $\pv J^\omega$ is given
  in~\cite[Section~8.2]{Almeida:1994a}:
  \begin{align*}
    &(xy)z=x(yz),\
      x1=1x=x,\
      (x^\omega)^\omega=x^\omega\\
    &x^\omega x=xx^\omega=x^\omega\\
    &(xy)^\omega=(yx)^\omega=(x^\omega y^\omega)^\omega.
  \end{align*}
  The identities in the first line are valid in all finite monoids and,
  therefore require no proof. In view of Proposition~\ref{p:general},
  to finish the proof it suffices to show that the remaining
  identities are provable from both $\Sigma$ and $\Gamma$.

  In the case of~$\Sigma$, only the identity $(xy)^\omega=(x^\omega
  y^\omega)^\omega$ needs to be considered. The following describes a
  proof from~$\Sigma$. First, we do an algebraic proof:
  \begin{equation}
    \label{eq:J-step-a}
    (xy)^\omega
    =(xy)^{\omega+1}
    =x(yx)^\omega y
    =x(xy)^\omega y
    =\cdots=x^m(xy)^\omega y^m.
  \end{equation}
  Hence, we may also prove from $\Sigma$
  \begin{equation*}
    \label{eq:J-step-b}
    (xy)^\omega=(yx)^\omega=y^m(yx)^\omega x^m=y^m(xy)^\omega x^m,
  \end{equation*}
  which, combined with \eqref{eq:J-step-a}, yields
  \begin{equation}
    \label{eq:J-step-c}
    (xy)^\omega=x^my^m(xy)^\omega x^my^m
  \end{equation}
  Iterating \eqref{eq:J-step-c}, we get an algebraic proof of
  $(xy)^\omega=(x^my^m)^m(xy)^\omega (x^my^m)^m$. Letting $m=n!$ and
  taking limits, we obtain that $\Sigma$ proves
  \begin{equation}
    \label{eq:J-step1}
    (xy)^\omega
    =(x^\omega y^\omega)^\omega(xy)^\omega (x^\omega y^\omega)^\omega.
  \end{equation}
  Similarly, we may prove algebraically
  $$(x^\omega y^\omega)^\omega
  =x^\omega y^\omega(x^\omega y^\omega)^\omega
  =x x^\omega y^\omega(x^\omega y^\omega)^\omega
  =x (x^\omega y^\omega)^\omega$$
  and so also
  $$(x^\omega y^\omega)^\omega %
  =xy(x^\omega y^\omega)^\omega %
  =\cdots %
  =(xy)^{n!}(x^\omega y^\omega)^\omega.
  $$
  Taking limits, we get %
  $(x^\omega y^\omega)^\omega %
  =(xy)^\omega(x^\omega y^\omega)^\omega$. %
  Combining with~\eqref{eq:J-step1} and taking into account that
  $(x^\omega y^\omega)^\omega$ is idempotent, we finally complete the
  proof of $(xy)^\omega=(x^\omega y^\omega)^\omega$ from~$\Sigma$.

  For $\Gamma$, we first note that, substituting $x$ for $y$
  in~$(xy)^\omega x=(xy)^\omega$, yields $x^{\omega+1}=x^\omega$.
  Hence, it suffices to show that the pseudoidentity
  $(xy)^\omega=(yx)^\omega$ is provable from~$\Gamma$, which can be
  established algebraically:
  $$(xy)^\omega %
  =y(xy)^\omega %
  =(yx)^\omega y %
  =(yx)^\omega.\qedhere$$
\end{proof}

Note that, in the proof of Theorem~\ref{t:J}, we alternated several
times topological and transitive closure. More precisely, we actually
proved that $\tilde{\Sigma}=\Sigma_4$. We do not know whether
$\tilde{\Sigma}=\Sigma_3$ but show below that
$\tilde{\Sigma}\ne\Sigma_2$.

We start with an auxiliary lemma involving equidivisibility. We say
that a semigroup $S$ is \emph{equidivisible} if any two factorizations
of the same element admit a common refinement
\cite{McKnight&Storey:1969}. We say that a pseudovariety (of
semigroups or monoids) \pv V is \emph{equidivisible} if, for each
finite set $A$, the semigroup \Om AV is equidivisible. The
equidivisible pseudovarieties of semigroups have been characterized
in~\cite{Almeida&ACosta:2017}. The characterization of equidivisible
pseudovarieties of monoids can be derived from it by noting that, for
a pseudovariety \pv V of monoids, $\Om AV=(\Om AW)^1$, where \pv W is
the pseudovariety of semigroups generated by~\pv V, which amounts to a
simple exercise, together with the obvious observation that a
semigroup $S$ is equidivisible if and only if so is the monoid $S^1$.
In particular, \pv M is equidivisible.

The following lemma can surely be generalized but is already
sufficient for our purposes.

\begin{Lemma}
  \label{l:omega-as-factor}
  Let $w\in A^*$ and $u,v\in\Om AM$ be such that $w^\omega$ is a
  factor of~$uv$. Then, $w^\omega$ is a factor of at least one of the
  factors $u$ and~$v$.
\end{Lemma}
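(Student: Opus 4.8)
The plan is to apply equidivisibility of $\Om AM$ (recalled just before the statement, $\pv M$ being an equidivisible pseudovariety) twice, together with the elementary identities for powers of the finite word~$w$ coming from the structure of $\hat{\mathbb N}$ described in Section~\ref{sec:prelim}; namely, we will use that $w^\omega$ is idempotent, that $w^{\omega-1}w=w\,w^{\omega-1}=w^\omega$ (immediate from $w^{\omega-1}=\lim_nw^{n!-1}$), and that $w^\omega w^{\omega-1}=w^{\omega-1}w^\omega=w^{\omega-1}$ (since $w^{\omega-1}$ lies in the maximal subgroup of $\overline{\langle w\rangle}\cong\hat{\mathbb N}$ containing the idempotent~$w^\omega$). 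If $w$ is the empty word there is nothing to prove, so assume $w\in A^+$.

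First I would reduce the statement to a two-factor split of $w^\omega$ itself. Write $uv=p\,w^\omega\,q$ with $p,q\in\Om AM$. The element $uv$ has the two factorizations $u\cdot v$ and $p\cdot(w^\omega q)$, so by equidivisibility there is $t\in\Om AM$ with either $p=ut$ and $v=t\,w^\omega q$ — in which case $w^\omega$ is already a factor of $v$ — or $u=pt$ and $w^\omega q=tv$. In the latter case, apply equidivisibility to the factorizations $t\cdot v$ and $w^\omega\cdot q$ of $tv$: there is $s\in\Om AM$ with either $t=w^\omega s$, so that $w^\omega$ is a factor of $u=pt$, or $w^\omega=ts$ and $v=sq$. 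Thus everything comes down to the claim that $w^\omega=ts$ implies $w^\omega$ is a factor of $t$ or of $s$: granting it, from $u=pt$ and $v=sq$ we conclude that $w^\omega$ is a factor of $u$ or of~$v$.

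To prove this claim I would use the word $w$ essentially. Compare the factorizations $t\cdot s$ and $w^{\omega-1}\cdot w$ of $w^\omega$; equidivisibility yields $r$ with either $t=w^{\omega-1}r$, or else $w^{\omega-1}=tr$ and $s=rw$. In the first case $t=w^{\omega-1}r=w^\omega w^{\omega-1}r$, so $w^\omega$ is a (prefix) factor of~$t$. In the second case, compare the factorizations $t\cdot r$ and $w^{\omega-1}\cdot w^\omega$ of $w^{\omega-1}$; equidivisibility produces $y$ with either $t=w^{\omega-1}y=w^\omega w^{\omega-1}y$, whence again $w^\omega$ is a factor of~$t$, or $r=yw^\omega$, whence $w^\omega$ is a factor of $r$ and therefore of $s=rw$. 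This exhausts all cases.

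The routine parts are the three power identities in $\hat{\mathbb N}$ and the bookkeeping that each branch indeed exhibits $w^\omega$ as a factor of $u$ or of~$v$. The one point that requires the right idea is the last step: rather than attempting to split $w^\omega=ts$ directly — which stalls because $w^{\omega+1}\ne w^\omega$ in general — one brings in $w^{\omega-1}$ and uses $w^{\omega-1}=w^\omega w^{\omega-1}=w^{\omega-1}w^\omega$ so that a repeated application of equidivisibility ``absorbs'' a copy of $w^\omega$ into one side of the split.
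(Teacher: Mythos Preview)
Your argument is correct, and it takes a genuinely different route from the paper's.

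Both proofs begin with the same reduction via equidivisibility: from $uv=p\,w^\omega q$ one arrives (unless done earlier) at a factorization $w^\omega=ts$ with $u=pt$ and $v=sq$, so that it suffices to show that $w^\omega$ is a factor of $t$ or of~$s$. The divergence is in how this last claim is handled. The paper observes that at least one of $t,s$ is not a finite word, say $t$, and then introduces an auxiliary finite ``prefix'' monoid $M_k$ (words of length at most $k$, with truncated product) together with the canonical continuous homomorphism $\varphi_k:\Om AM\to M_k$; from $\varphi_k(t)=\varphi_k(w^\omega)=w^n$ for $k=n|w|$ it concludes that every $w^n$ is a prefix of $t$, whence by a compactness argument $w^\omega$ is a prefix of~$t$. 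Your approach instead applies equidivisibility twice more, using the factorizations $w^\omega=w^{\omega-1}\cdot w$ and $w^{\omega-1}=w^{\omega-1}\cdot w^\omega$ together with the identity $w^{\omega-1}=w^\omega w^{\omega-1}$; each branch then forces $w^\omega$ as a prefix of $t$ or as a factor of~$s$.

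Your route is arguably cleaner here: it stays entirely within the equidivisible monoid $\Om AM$ and the arithmetic of~$\hat{\mathbb N}$, avoiding the construction of $M_k$ and the (easy but unstated in the paper) compactness step from ``every $w^n$ is a prefix'' to ``$w^\omega$ is a prefix''. The paper's approach, on the other hand, is more explicit about \emph{where} $w^\omega$ sits (always as a prefix of the infinite side) and the $M_k$ device is a reusable tool for prefix arguments in~$\Om AM$.
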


\begin{proof}
  By equidivisibility, from the two factorizations $uv=xw^\omega y$
  (for some $x,y\in\Om AM$), we know that there is a common
  refinement. Hence, if $w^\omega$ is a factor of neither $u$ nor~$v$,
  then there is a factorization $w^\omega=zt$ with $u=xz$ and $v=ty$.
  We reach a contradiction by showing that $w^\omega$ must be a factor
  of at least one of $z$ and~$t$.

  First note that at least one of $z$ and $t$ is a not a finite word
  for, otherwise, so would be $w^\omega$. By symmetry, we may as well
  assume that $z$~is not a finite word. We claim that $w^n$ is a
  prefix of~$z$ for every $n\ge1$ and, therefore, so is $w^\omega$,
  thereby reaching the desired contradiction. To prove the claim,
  consider the monoid $M_k$ consisting of all words of~$A^*$ of length
  at most $k=|w^n|$ where the product is defined by $r\cdot s=rs$ if
  $|rs|\le k$, while $r\cdot s$ is taken to be the prefix of $rs$ of
  length $k$ otherwise. Consider also the unique continuous
  homomorphism $\varphi_k:\Om AM\to M_k$ which maps each letter $a$
  from~$A$ to $a$ as an element of~$M_k$. Note that $\varphi_k$ maps
  each word of length at most $k$ to itself and every other finite
  word to its prefix of length $k$. It follows that $\varphi_k(s)$ is
  a prefix of~$s$ for every pseudoword $s\in\Om AM$.

  Since $z$ is not a finite word, there is a sequence of words
  $(z_m)_m$ converging to~$z$ with $|z_m|\ge k$ for every~$m$. Then,
  from the equalities
  $\varphi_k(z)=\varphi_k(zt)=\varphi_k(w^\omega)=w^n$, we deduce that
  $w^n$ is a prefix of~$z$, as was claimed.
\end{proof}

\begin{Prop}
  \label{p:J-notSigma2}
  For $\pv U=\pv M$ and $\Sigma=\{x^{\omega+1}=x^\omega,\
  (xy)^\omega=(yx)^\omega\}$, we have $\tilde{\Sigma}\ne\Sigma_2$.
\end{Prop}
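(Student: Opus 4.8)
The plan is to produce a concrete pseudoidentity lying in $\tilde{\Sigma}$ but not in~$\Sigma_2$; I would take $(xy)^\omega=(x^\omega y^\omega)^\omega$. This pseudoidentity is valid in $\pv J=\op\Sigma\cl$ — an explicit proof of it from~$\Sigma$ is already written out inside the proof of Theorem~\ref{t:J} — so it belongs to $\tilde{\Sigma}$, and the whole task reduces to showing that $\bigl((xy)^\omega,(x^\omega y^\omega)^\omega\bigr)\notin\Sigma_2$.

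The main device is a family of ``$\omega$-power factor'' invariants. For a finite word $w$ over $\{x,y\}$, put $C_w=\{\,s\in\Om{\{x,y\}}M : w^\omega\text{ is a factor of }s\,\}$; being the image of the continuous map $(s_1,s_2)\mapsto s_1w^\omega s_2$ on a compact space, $C_w$ is closed. The first step is to verify that membership in $C_w$ is preserved by~$\Sigma_1$; since $\Sigma_1$ is the transitive closure of~$\Sigma_0$, it suffices to check it on a pair of~$\Sigma_0$. Such a pair has the form $\bigl(p_0\,\varphi(u)\,p_1\cdots\varphi(u)\,p_j,\ p_0\,\varphi(v)\,p_1\cdots\varphi(v)\,p_j\bigr)$ with $\{u,v\}$ equal to $\{r^{\omega+1},r^\omega\}$ or $\{(rr')^\omega,(r'r)^\omega\}$ for pseudowords $r,r'$; by repeated use of Lemma~\ref{l:omega-as-factor} the membership of each side in $C_w$ is equivalent to ``$w^\omega$ is a factor of some $p_i$, or of~$\varphi(u)$ (resp.\ $\varphi(v)$)'', so everything reduces to the equivalence $\varphi(u)\in C_w\iff\varphi(v)\in C_w$. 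The latter follows from the identities $r^{\omega+1}=r^\omega r=r\,r^\omega$, $r^\omega=r\,r^{\omega-1}=r^{\omega-1}r$ and $(rr')^\omega=\bigl((rr')^{\omega-1}r\bigr)r'$, $(r'r)^\omega=r'\bigl((rr')^{\omega-1}r\bigr)$, valid in~$\Om{}M$, together with Lemma~\ref{l:omega-as-factor} again. Hence $\Sigma_1\subseteq (C_w\times C_w)\cup(C_w^{\,c}\times C_w^{\,c})$ for every finite word~$w$.

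The delicate point — and the step I expect to be the real obstacle — is passing from $\Sigma_1$ to its closure $\Sigma_2$: the set $C_w$ is closed but not open, so it is \emph{not} automatically $\Sigma_2$-invariant, and one cannot simply say ``$\Sigma_2$ is contained in a closed relation missing the pair''. Instead I would argue as follows. Using that $\Om{\{x,y\}}M$ is metrizable, suppose $(a_n,c_n)\in\Sigma_1$ with $a_n\to(xy)^\omega$ and $c_n\to(x^\omega y^\omega)^\omega$. Since $(x^\omega y^\omega)^\omega$ has neither $(xy)^2$ nor $(yx)^2$ as a factor it lies in the open complements of the closed sets $C_{xy}$ and $C_{yx}$, so $c_n\notin C_{xy}\cup C_{yx}$ for large~$n$, whence by the second paragraph also $a_n\notin C_{xy}\cup C_{yx}$ for large~$n$. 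On the other hand $(xy)^\omega$ has no $x^2$ and no $y^2$ factor, and ``having $x^2$ (resp.\ $y^2$) as a factor'' is clopen, so $a_n$ has no $x^2$ and no $y^2$ factor for large~$n$. Now a pseudoword of $\Om{\{x,y\}}M$ with no $x^2$ and no $y^2$ factor lies in the closure of the set of alternating words $\{(xy)^j,(xy)^jx,(yx)^j,(yx)^jy:j\ge0\}$, and a short description of that closure shows that every \emph{infinite} element of it has $(xy)^\omega$ or $(yx)^\omega$ as a factor; since $a_n$ has neither, $a_n$ must be a finite word for large~$n$.

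Finally, a finite word admits no nontrivial rewriting step — the only occurrences of $r^\omega$ or $(rr')^\omega$ inside a finite word force $r=1$, resp.\ $rr'=1$, hence are trivial — so any pair in $\Sigma_0$, and therefore in $\Sigma_1$, with a finite word on one side has equal sides; thus $a_n=c_n$ for large~$n$. Then $c_n=a_n\to(xy)^\omega$ while $c_n\to(x^\omega y^\omega)^\omega$, forcing $(xy)^\omega=(x^\omega y^\omega)^\omega$ in $\Om{\{x,y\}}M$, which is false since these two pseudowords are separated by the clopen set of pseudowords having $(xy)^2$ as a factor. This contradiction shows $\bigl((xy)^\omega,(x^\omega y^\omega)^\omega\bigr)\notin\Sigma_2$, and therefore $\tilde{\Sigma}\ne\Sigma_2$.
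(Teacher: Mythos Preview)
Your proof is correct and follows essentially the same approach as the paper: both arguments use the witness $(xy)^\omega=(x^\omega y^\omega)^\omega$, exploit that the two sides of each pseudoidentity in~$\Sigma$ are $\mathcal{J}$-equivalent (hence have the same factors), and combine this with Lemma~\ref{l:omega-as-factor} to show that the property ``has $(xy)^\omega$ as a factor'' is $\Sigma_1$-invariant, then analyze the approximants to $(xy)^\omega$ and dispose of the finite-word case by the triviality of $\Sigma_1$-pairs with a finite side. The only noticeable difference is in the limit step: the paper uses directly that the closure of $(xy)^*$ is open and consists of all powers of $xy$ (so infinite approximants to $(xy)^\omega$ automatically lie in $C_{xy}$), whereas you reach the same conclusion via the pair of invariants $C_{xy},C_{yx}$ together with the clopen ``no $x^2$, no $y^2$ factor'' condition and the explicit description of alternating pseudowords --- a slightly longer route, but equivalent.
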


\begin{proof}
  For the purpose of the present proof, we take $A=\{x,y\}$.
  
  We have shown in the proof of Theorem~\ref{t:J} that the
  $\omega$-identity
  \begin{equation}
    \label{eq:J-noSigma2}
    (x^\omega y^\omega)^\omega=(xy)^\omega
  \end{equation}
  belongs to~$\Sigma_3$. We prove that it does belong to~$\Sigma_2$.
  For that purpose, we claim that, if the pseudoidentity
  $w=(xy)^\omega$ is in~$\Sigma_2$, then $(xy)^\omega$ is a factor
  of~$w$. Since not even the word $xyx$ is a factor of~$(x^\omega
  y^\omega)^\omega$ (cf.~\cite[Lemma~8.2]{Almeida&Volkov:2006}), we
  conclude that the pseudoidentity \eqref{eq:J-noSigma2} cannot belong
  to~$\Sigma_2$.

  The hypothesis of the claim implies the existence of a sequence of
  pseudoidentities $(w_n=v_n)_n$ in~$\Sigma_1$ which converges to
  $w=(xy)^\omega$. Now, by taking subsequences, we may as well assume
  that either the sequence $(v_n)_n$ consists only of finite words or
  only of infinite pseudowords. In the first case, the only
  $\Sigma_1$-pseudoidentity of the form $u=v_n$ is the trivial
  pseudoidentity $v_n=v_n$. Since the pseudoidentity
  \eqref{eq:J-noSigma2}~is not trivial over~\pv M, the first case is
  excluded. In the second case, we note that $(xy)^\omega$ is a factor
  of~$v_n$ for every sufficiently large~$n$. Indeed, the language
  $(xy)^*$ has an open closure in~\Om AM
  \cite[Theorem~3.6.1]{Almeida:1994a}, which consists of all powers
  of~$xy$.

  Thus, to establish our claim, it suffices to show that, if the
  pseudoidentity $u=v$ belongs to~$\Sigma_1$ and the pseudoword
  $(xy)^\omega$ is a factor of~$v$ then it is also a factor of~$u$.
  Since $\Sigma_1$~is the transitive closure of~$\Sigma_0$, by a
  straightforward induction argument it suffices to treat the case
  where $u=v$ belongs to $\Sigma_0$. Hence, there are a nonempty word
  $\mathbf{t}$, pseudowords $w_1,\ldots,w_n$, a pseudoidentity $u'=v'$
  such that either it or $v'=u'$ belongs to~$\Sigma$, and a continuous
  endomorphism $\varphi$ of~\Om AM such that
  $u=\mathbf{t}(\varphi(u'),w_1,\ldots,w_n)$ and
  $v=\mathbf{t}(\varphi(v'),w_1,\ldots,w_n)$. Note that, since $u'$
  and $v'$ are \Cl J-equivalent, so are $\varphi(u')$ and
  $\varphi(v')$, which means that these two pseudowords have the same
  factors.
  Without loss of generality, we may assume that, writing
  $\mathbf{t}=\mathbf{t}(x_0,x_1,\ldots,x_n)$, all the letters $x_i$
  ($i=1,\ldots,n$) appear at least once in~$\mathbf{t}$. By
  Lemma~\ref{l:omega-as-factor}, we deduce from the assumption that
  $(xy)^\omega$~is a factor of~$v$ that it must also be a factor of
  either $\varphi(v')$ or one of the $w_i$. Hence, $(xy)^\omega$~is a
  factor of either $\varphi(u')$ or one of the $w_i$ and, therefore,
  also of~$u$.
\end{proof}

\section{The group case}
\label{sec:groups}

Let \pv G be the pseudovariety of all finite groups. As far as
subpseudovarieties of~\pv G are concerned, whether we view groups in
the natural signatures for semigroups, monoids, or groups is
irrelevant, since the identity element is the only idempotent
$x^\omega$ and inversion is also captured by the semigroup pseudoword
$x^{\omega-1}$. However, for the purpose of this section, we prefer to
deal with the group signature, consisting of a binary multiplication,
a constant symbol 1, for the identity element, and the unary operation
of inversion.

Note that for $u,v\in\Om AG$, each of the pseudoidentities $u=v$ and
$u^{-1}v=1$ is provable from the other. Hence, in the language of
groups, it suffices to deal with pseudoidentities of the form $w=1$.

For a set $\Upsilon$ of group pseudowords, consider the pseudovariety
$$\pv H_\Upsilon=\op u=1:\ u\in\Upsilon\cl.$$
Denote by $N_\Upsilon$ the subgroup of~\Om AG generated by all
conjugates of elements of the form $\varphi(u)$ with $B$ a finite
alphabet, $u\in\Upsilon\cap\Om BG$, and $\varphi:\Om BG\to\Om AG$ a
continuous homomorphism.

\begin{Thm}
  \label{t:conjecture-G}
  Let $\Upsilon$ be a set of group pseudowords. Then, for every
  $w\in\Om AG$, the pseudovariety $\pv H_\Upsilon$ satisfies the
  pseudoidentity $w=1$ if and only if $w$ belongs to the closure of
  $N_\Upsilon$ in~\Om AG.
\end{Thm}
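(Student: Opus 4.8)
The plan is to prove the two implications separately, with the "only if" direction being the one that requires real work since it is essentially a completeness statement (every valid pseudoidentity is provable) reformulated topologically. First I would establish the "if" direction: if $w$ lies in the closure of $N_\Upsilon$, then $w=1$ holds in $\pv H_\Upsilon$. This is the soundness half. For each $u\in\Upsilon\cap\Om BG$, each continuous homomorphism $\varphi:\Om BG\to\Om AG$, each $g\in\Om AG$, the element $g^{-1}\varphi(u)g$ evaluates to the identity in every $G\in\pv H_\Upsilon$ under any continuous homomorphism $\Om AG\to G$ (because $u=1$ holds, hence $\varphi(u)$ maps to the identity, hence so does its conjugate). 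Products of such elements also map to the identity, so $N_\Upsilon$ is carried into the identity; since the evaluation map $\Om AG\to G$ is continuous and $G$ is discrete (Hausdorff), the preimage of the identity is closed, so the whole closure of $N_\Upsilon$ is carried to the identity. This shows $\pv H_\Upsilon$ satisfies $w=1$.

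For the "only if" direction, suppose $w\notin\overline{N_\Upsilon}$. I want to produce a finite group $G\in\pv H_\Upsilon$ and a continuous homomorphism $\psi:\Om AG\to G$ with $\psi(w)\neq 1$. The natural candidate is to quotient $\Om AG$ by the closure of $N_\Upsilon$. The key structural point is that $\overline{N_\Upsilon}$ should be a closed normal subgroup of $\Om AG$: $N_\Upsilon$ is normal by construction (it is generated by all conjugates of the relevant elements, and the fully invariant generating family is closed under conjugation since conjugating $\varphi(u)$ by an arbitrary element of $\Om AG$ again has that form), and the closure of a normal subgroup of a topological group is normal. Moreover $\overline{N_\Upsilon}$ is invariant under all continuous endomorphisms of $\Om AG$: indeed each continuous endomorphism sends a conjugate $g^{-1}\varphi(u)g$ to $\psi(g)^{-1}(\psi\circ\varphi)(u)\psi(g)$, which is again in the generating family, so $N_\Upsilon$ is fully invariant, and continuity gives the same for its closure. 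Therefore the quotient $\Om AG/\overline{N_\Upsilon}$ is a profinite group which, being generated by $A$ and residually finite, satisfies $u=1$ for all $u\in\Upsilon$ --- i.e. it is a pro-$\pv H_\Upsilon$ group --- and is in fact the free pro-$\pv H_\Upsilon$ group $\Om A{H_\Upsilon}$. Since $w\notin\overline{N_\Upsilon}$, the image of $w$ in this quotient is nontrivial, and by residual finiteness there is a continuous homomorphism onto a finite group $G\in\pv H_\Upsilon$ separating that image from the identity; pulling back gives the desired evaluation witnessing that $w=1$ fails in $\pv H_\Upsilon$.

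The step I expect to be the main obstacle is verifying cleanly that $\overline{N_\Upsilon}$ is exactly the congruence class of $1$ for the kernel congruence defining $\Om A{H_\Upsilon}$ --- equivalently, that $\Om AG/\overline{N_\Upsilon}\cong\Om A{H_\Upsilon}$. One inclusion (that the kernel of $\Om AG\to\Om A{H_\Upsilon}$ contains $\overline{N_\Upsilon}$) is just soundness again. The reverse inclusion amounts to showing that the quotient $\Om AG/\overline{N_\Upsilon}$ is already residually $\pv H_\Upsilon$ and satisfies the defining pseudoidentities, so that the canonical map $\Om A{H_\Upsilon}\to\Om AG/\overline{N_\Upsilon}$ is injective; this requires knowing that for a closed normal subgroup $N$ of a profinite group $P$, the quotient $P/N$ is again profinite (standard) and that the pseudoidentities $u=1$ with $u\in\Upsilon$ hold in $P/N$ precisely because the images of the $\varphi(u)$ are trivial there. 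Once this identification is in place the theorem follows; the earlier remark in the excerpt that in the group case "all congruence classes are determined by a single class" is exactly what makes this argument go through, since normal subgroups are in bijection with group congruences and closure of a subgroup corresponds to closure of the associated congruence.
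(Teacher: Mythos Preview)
Your proposal is correct. The ``if'' direction matches the paper essentially verbatim. For the ``only if'' direction you and the paper take slightly different routes to the same endpoint.

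The paper argues more directly: assuming $w\notin\overline{N_\Upsilon}$, it uses compactness of $\overline{N_\Upsilon}$ to build, by hand, a clopen normal subgroup $L=\overline{N_\Upsilon}K$ of $\Om AG$ (with $K$ a finite intersection of separating clopen normal subgroups) that contains $\overline{N_\Upsilon}$ but not $w$. Then $G=\Om AG/L$ is a finite group; since $L\supseteq N_\Upsilon$ one checks $G\in\pv H_\Upsilon$, and $w\notin L$ gives the contradiction. You instead pass straight to the quotient $\Om AG/\overline{N_\Upsilon}$ and invoke the standard fact that the quotient of a profinite group by a closed normal subgroup is again profinite, then use residual finiteness to find the separating finite quotient. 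These are two presentations of the same idea: the paper's compactness step is precisely an ad hoc proof of that profinite-quotient fact in this instance. Your version is cleaner if one is willing to cite profinite group structure theory; the paper's version is self-contained.

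One comment on your write-up: you flag the identification $\Om AG/\overline{N_\Upsilon}\cong\Om A{H_\Upsilon}$ as the main obstacle, but you do not actually need it. All that is required is that every finite continuous image of $\Om AG/\overline{N_\Upsilon}$ lies in $\pv H_\Upsilon$; this follows immediately since such an image has the form $\Om AG/M$ with $M$ clopen normal and $M\supseteq N_\Upsilon$, and the lifting argument you already sketch shows such a quotient satisfies every $u=1$ with $u\in\Upsilon$. The full-invariance observation about $\overline{N_\Upsilon}$ is likewise correct but unnecessary here; normality alone suffices for the theorem, and full invariance would only be relevant for the (superfluous) identification with $\Om A{H_\Upsilon}$.
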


\begin{proof}
  Suppose that $w\in\overline{N_\Upsilon}$, say $w=\lim w_n$ for a sequence
  $(w_n)_n$ of elements of~$N_\Upsilon$. Now, for each $u\in\Upsilon\cap\Om BG$
  and each continuous homomorphism $\varphi:\Om BG\to\Om AG$, the
  pseudoidentity $\varphi(u)=1$ is valid in~$\pv H_\Upsilon$, whence so
  is $g\varphi(u)g^{-1}=1$ for every $g\in\Om AG$. Hence, each
  pseudoidentity $w_n=1$ is also valid in~$\pv H_\Upsilon$, which
  entails that so is $w=1$.

  Conversely, suppose that $w\in\Om AG$ is such that $\pv H_\Upsilon$
  satisfies the pseudoidentity $w=1$ but, arguing by contradiction,
  $w$ does not belong to~$\overline{N_\Upsilon}$. Since \Om AG is a
  profinite group, for each $v\in\overline{N_\Upsilon}$ there is a
  clopen normal subgroup $K_v$ of~\Om AG such that $wK_v\ne vK_v$. As
  $\overline{N_\Upsilon}$ is compact, we may extract a finite covering
  from $\bigcup_{v\in\overline{N_\Upsilon}}vK_v$, say
  $\bigcup_{i=1}^nv_iK_{v_i}$. Let $K=\bigcap_{i=1}^nK_{v_i}$, which
  is a clopen normal subgroup of~\Om AG. Finally, let
  $L=\overline{N_\Upsilon}K$, which is again a clopen normal subgroup
  of~\Om AG. Clearly, $L$ contains $\overline{N_\Upsilon}$ and we
  claim that it does not contain~$w$. Indeed, if $w\in L$, then there
  is $v\in\overline{N_\Upsilon}$ such that $w\in vK$. Since the open
  sets $v_iK_{v_i}$ cover $\overline{N_\Upsilon}$, we deduce that
  there is $i\in\{1,\ldots,n\}$ such that $v\in v_iK_{v_i}$. Hence, we
  have
  $$w\in vK\subseteq v_iK_{v_i}K\subseteq v_iK_{v_i},$$
  which contradicts the choice of $K_{v_i}$.

  Let $G$ be the quotient group $\Om AG/L$, which is a finite group.
  Since $L$ contains~$\overline{N_\Upsilon}$, by the first part of the
  proof we infer that $G\in\pv H_\Upsilon$. Hence, by hypothesis, $G$
  satisfies the pseudoidentity $w=1$ which, for the evaluation of each
  letter $a\in A$ by the coset $aL$, yields $w\in L$, in contradiction
  with the choice of~$L$. It follows that $w$ must belong
  to~$\overline{N_\Upsilon}$.
\end{proof}

\begin{Cor}
  \label{c:conjecture-group-case}
  The pseudovariety \pv G is strong.
\end{Cor}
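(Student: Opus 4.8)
The plan is to leverage Theorem~\ref{t:conjecture-G}, which already carries the hard analytic content, and then to check that the set of pseudowords that are provably equal to~$1$ from a given set of hypotheses is a \emph{closed subgroup} of~$\Om AG$ containing the relevant normal closure. Throughout I work in the group signature, which by the remarks opening Section~\ref{sec:groups} is no loss of generality, and I reduce each group pseudoidentity $u'=v'$ to the form $u'^{-1}v'=1$: since a finite group satisfies $u'=v'$ exactly when it satisfies $u'^{-1}v'=1$, a set $\Sigma$ of group pseudoidentities satisfies $\op\Sigma\cl=\pv H_\Upsilon$ for $\Upsilon=\{u'^{-1}v':(u'=v')\in\Sigma\}$. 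Fix a finite alphabet~$A$ and suppose $u=v$ with $u,v\in\Om AG$ holds in~$\op\Sigma\cl=\pv H_\Upsilon$; then $w:=u^{-1}v$ satisfies $w=1$ in~$\pv H_\Upsilon$, so Theorem~\ref{t:conjecture-G} gives $w\in\overline{N_\Upsilon}$.

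The core step is to prove that $T:=\{z\in\Om AG:(z,1)\in\tilde\Sigma\}$ contains~$\overline{N_\Upsilon}$. For each $(u'=v')\in\Sigma$ with $u',v'\in\Om BG$ and each continuous homomorphism $\varphi:\Om BG\to\Om AG$, the pair $(\varphi(u'),\varphi(v'))$ lies in $\Sigma_0\subseteq\tilde\Sigma$; applying stability of~$\tilde\Sigma$ with the term $x_1^{-1}x_0$ and parameter $\varphi(u')$, and simplifying $\varphi(u')^{-1}\varphi(u')=1$ and $\varphi(u')^{-1}\varphi(v')=\varphi(u'^{-1}v')$ by equalities valid in~\pv G, yields $(\varphi(u'^{-1}v'),1)\in\tilde\Sigma$; that is, $\varphi(u)\in T$ for every $u\in\Upsilon\cap\Om BG$. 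Stability with the term $x_1x_0x_1^{-1}$ then puts every conjugate $g\varphi(u)g^{-1}$ with $g\in\Om AG$ into~$T$. Moreover $T$ is a subgroup: it contains $1$ by reflexivity of~$\tilde\Sigma$; stability with $x_0^{-1}$ gives closure under inversion; and if $a,b\in T$ then stability with $x_0x_1$ and parameter $b$ gives $(ab,b)\in\tilde\Sigma$, so $(ab,1)\in\tilde\Sigma$ by transitivity. Hence $N_\Upsilon\subseteq T$, and since $\tilde\Sigma$ is closed (Proposition~\ref{p:gen-provable-inequalities}) and $z\mapsto(z,1)$ is continuous, $T$ is closed, so $\overline{N_\Upsilon}\subseteq T$.

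It then remains to convert $(w,1)=(u^{-1}v,1)\in\tilde\Sigma$ back into $(u,v)\in\tilde\Sigma$: stability with the term $x_1x_0$ and parameter $u$, using $u\cdot u^{-1}v=v$ in~$\Om AG$, gives $(v,u)\in\tilde\Sigma$, whence $(u,v)\in\tilde\Sigma$ by symmetry. Thus $u=v$ is provable from~$\Sigma$, and since $\Sigma$ and $u=v$ were arbitrary, \pv G is strong.

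I do not expect a genuine obstacle here: all the difficulty is packaged into Theorem~\ref{t:conjecture-G}, and the remaining work only invokes the elementary closure properties of~$\tilde\Sigma$ furnished by Proposition~\ref{p:gen-provable-inequalities}. The one point needing a little care is the choice of signature, since the stability arguments apply terms of the ambient algebraic language: working in the group signature makes $x^{-1}$ available as a basic operation; alternatively one stays in the semigroup signature and, wherever an inverse $t^{-1}$ is needed above, feeds in the pseudoword $t^{\omega-1}\in\Om AG$ as one of the parameters~$w_i$.
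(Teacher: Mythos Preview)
Your proposal is correct and follows exactly the route the paper intends: the corollary is stated without proof, relying on Theorem~\ref{t:conjecture-G} together with the remark preceding it that $u=v$ and $u^{-1}v=1$ are provable from each other, and your argument spells out precisely those details (that $T=\{z:(z,1)\in\tilde\Sigma\}$ is a closed subgroup containing $N_\Upsilon$, hence $\overline{N_\Upsilon}$). Your handling of the signature question is also in line with the paper's choice to work in the group signature throughout Section~\ref{sec:groups}.
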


The same method may be applied to show that the pseudovariety of all
finite rings (not necessarily with identity element) is strong.

\begin{Thm}
  \label{t:G}
  For $\pv U=\pv M$, the pseudoidentity $x^\omega=1$ is h-strong.
\end{Thm}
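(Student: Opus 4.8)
The plan is to apply Proposition~\ref{p:general} with $\pv U=\pv M$ and $\sigma$ the implicit signature $\kappa=\{\_\cdot\_,\_\vphantom{|}^{\omega-1},1\}$, so that $\pv V=\op x^\omega=1\cl=\pv G$. There are then exactly two things to verify: first, that $\pv G$ is $\kappa$-reducible for the equation $x=y$; and second, that the variety $\pv G^\kappa$ of $\kappa$-algebras generated by $\pv G$ admits a basis of identities that are provable from $x^\omega=1$ in our scheme.

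The second requirement is routine. In $\pv G$ the operation $\_\vphantom{|}^{\omega-1}$ is group inversion, so $\pv G^\kappa$ is simply the variety of all groups in the signature $\{\_\cdot\_,\_\vphantom{|}^{\omega-1},1\}$ (its relatively free algebra over an alphabet $A$ is the abstract subgroup of $\Om AG$ generated by $A$, which is free), and the usual basis consists of associativity, the unit laws, and $x\,x^{\omega-1}=x^{\omega-1}\,x=1$. Associativity and the unit laws hold in every monoid and therefore need no proof within $\pv M$. For the remaining identity, note that $x\,x^{\omega-1}=x^{\omega-1}\,x=x^\omega$ already holds in $\Om AM$, each side being the limit of the sequence $(x^{n!})_n$; hence these pseudoidentities are instances of reflexivity, and combining them with the hypothesis $x^\omega=1$ yields $x\,x^{\omega-1}=x^{\omega-1}\,x=1$. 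So $\pv G^\kappa$ has a basis provable from $\{x^\omega=1\}$.

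The first requirement is the substantive input. By Proposition~\ref{p:reducibility} it amounts to the assertion that the $\kappa$-subalgebra of $\Om AM$ generated by $A$ is dense in each fibre of the canonical projection $\pi\colon\Om AM\to\Om AG$; since $\{1\}$ is open in $\Om AM$, the only delicate fibre is $\pi^{-1}(1)$, so what is really needed is that every pseudoword of $\Om AM$ mapping to $1$ in the free profinite group is a limit of $\kappa$-words mapping to $1$. This is a known reducibility property of $\pv G$; in the monoid setting it is reconciled with the customary semigroup/group formulations precisely by the remarks opening Section~\ref{sec:groups}, namely that $x^\omega$ and $x^{\omega-1}$ recover the identity and inversion. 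Granting it, Proposition~\ref{p:general} gives that $x^\omega=1$ is h-strong for $\pv M$.

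The hard part is thus the reducibility ingredient rather than anything in our proof calculus: it is the genuinely infinitary passage from ``$\pi(w)=1$'' to an approximation of $w$ by $\kappa$-words lying in the same fibre. A self-contained alternative would bypass Section~\ref{sec:reducible} and argue directly that the $\tilde\Sigma$-class of $1$ — which one identifies as the smallest closed submonoid of $\Om AM$ containing all $\omega$-powers $s^\omega$ and closed under the conjugations $n\mapsto s\,n\,s^{\omega-1}$, under multiplication, and under insertion of $\omega$-powers ($\alpha\beta\mapsto\alpha\,s^\omega\,\beta$) — coincides with $\pi^{-1}(1)$, reducing $(u,v)\in\tilde\Sigma$ to this single class via $(u,v)\in\tilde\Sigma\iff u\,v^{\omega-1}\in\tilde\Sigma$-class of $1$; I expect this to be where the real difficulty concentrates, since no finite-quotient separation is available (every finite-group quotient of $\Om AM$ factors through $\pi$ and so cannot separate $w$ from $1$ when $\pi(w)=1$), forcing the required derivations to be produced constructively from the way $w$ is witnessed as a limit, with the group-theoretic input of Theorem~\ref{t:conjecture-G} (closure of products of finitely generated closed subgroups of a free profinite group) doing the work.
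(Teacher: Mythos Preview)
Your proposal is correct and follows the same overall strategy as the paper: invoke the $\kappa$-reducibility of $\pv G$ for the equation $x=y$ (which the paper also takes from Ash's theorem via \cite{Almeida&Steinberg:2000a}) and verify that a basis for $\pv G^\kappa$ is provable from $x^\omega=1$, so that Proposition~\ref{p:general} applies.

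The only noteworthy difference is in how the basis step is handled. You identify $\pv G^\kappa$ abstractly with the variety of groups (using residual finiteness of free groups) and observe that the group axioms $x\,x^{\omega-1}=x^{\omega-1}x=1$ follow at once from the hypothesis. The paper instead works this out by hand: it first proves $(xy)^{\omega-1}=y^{\omega-1}x^{\omega-1}$ algebraically from $x^\omega=1$, uses this to rewrite every $\kappa$-term into a ``standard form'' $x_1^{\varepsilon_1}\cdots x_n^{\varepsilon_n}$ with $\varepsilon_i\in\{1,\omega-1\}$, and then appeals to residual finiteness of free groups so that any $\kappa$-identity valid in $\pv G$ becomes an identity of free groups, hence reducible via $aa^{-1}\to1$ and $a^{-1}a\to1$. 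This buys the paper the slightly sharper conclusion that every such $\kappa$-identity already lies in $\Sigma_1$ (is \emph{algebraically} provable), not merely in $\tilde\Sigma$; otherwise the two arguments are equivalent.

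One small inaccuracy: your reformulation of reducibility for $x=y$ as ``$\oms AM$ is dense in each fibre of $\pi$'' is strictly stronger than what Proposition~\ref{p:reducibility} gives (density of $(\pv G,\kappa)$-solutions in the set of $\pv G$-solutions does not force the approximants to stay in a prescribed fibre), and the claim that ``the only delicate fibre is $\pi^{-1}(1)$'' does not follow from $\{1\}$ being open. This is harmless, since you correctly defer to the known reducibility result rather than rely on this paraphrase.
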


\begin{proof}
  The key result upon which the proof is based is the
  $\kappa$-reducibility of $\pv G=\op x^\omega=1\cl$ for the equation
  $x=y$. More generally, based on a celebrated theorem of
  Ash~\cite{Ash:1991}, which contains all the essential hard work, it
  has been observed in~\cite[Theorem~4.9]{Almeida&Steinberg:2000a}
  that \pv G is $\kappa$-reducible for all finite graph systems of
  equations.\footnote{By a graph system of equations we mean the
    system associated with a finite directed graph which, for each
    edge $x\xrightarrow{y}z$ in the graph, includes the equation
    $xy=z$. Now, for~\pv G, every solution of the equation $y_1=y_2$
    is a solution of the graph system determined by the graph %
    $
    \begin{tikzpicture}[baseline=-\the\dimexpr\fontdimen22\textfont2\relax,
      semithick]
      \node   (1)                {$x$};
      \node   (2) [right=of 1]   {$z$};
      \path[->]  (1) edge [bend left=14]  node [above] {\tiny$y_1$} (2)
                 (1) edge [bend right=14] node [below] {\tiny$y_2$} (2);
    \end{tikzpicture}
    $ %
    and, conversely, every solution of the graph system involves a
    solution of the equation $y_1=y_2$.} In view of
  Proposition~\ref{p:general}, it remains to show that every
  $\kappa$-identity valid in~\pv G may be proved from $x^\omega=1$ in
  the sense of Section~\ref{sec:scheme}. In fact, we show that it may
  be algebraically proved.

  The essential observation is that the $\kappa$-identity
  \begin{equation}
    \label{eq:inversion-inverts-order}
    (xy)^{\omega-1}=y^{\omega-1}x^{\omega-1}
  \end{equation}
  may be algebraically
  proved from $x^\omega=1$. Indeed here is a description of such a
  proof:
  \begin{align*}
    (xy)^{\omega-1}
    &=y^\omega(xy)^{\omega-1}x^\omega
    =y^{\omega-1}y(xy)^{\omega-1}xx^{\omega-1}\\
    &=y^{\omega-1}(yx)^\omega x^{\omega-1}
    =y^{\omega-1}x^{\omega-1}.
  \end{align*}
  Note also that $(x^{\omega-1})^{\omega-1}=x^{\omega+1}=x$, where the
  latter equality is provable from~$x^\omega=1$. Applying repeatedly
  the identity~\eqref{eq:inversion-inverts-order}, every $\kappa$-term
  may be rewritten in the form $x_1^{\varepsilon_1}\cdots
  x_n^{\varepsilon_n}$, where each $x_i$~is a variable and each
  exponent $\varepsilon_i$~is either $1$ or~$\omega-1$. We say that a
  $\kappa$-term of this type is in \emph{standard form}; a
  $\kappa$-identity whose sides are in standard form is also said to
  be in \emph{standard form}. Thus, in the presence of the
  pseudoidentity $x^\omega=1$, every $\kappa$-identity valid in~\pv G
  is algebraically provably equivalent to a $\kappa$-identity $u=v$ in
  standard form that is also valid in~$\pv G$. Since the free group is
  residually finite, $u=v$ may be viewed as a group identity (by
  removing the $\omega$'s from the exponents) that is satisfied by all
  groups, which means that, by applying the reduction rules
  $aa^{-1}\to1$ and $a^{-1}a\to1$ a finite number of times, both sides
  may be transformed to the same reduced group word. Since, in the
  context of finite monoids, both rules follow from the pseudoidentity
  $x^\omega=1$, we conclude that $u=v$ is algebraically provable from
  $x^\omega=1$.
\end{proof}

Although the next result is superseded by Corollary~\ref{c:CS}, it
seems worthwhile to include it at this stage.

\begin{Cor}
  \label{c:H}
  For $\pv U=\pv M$, every set $\Sigma$ of pseudoidentities defining a
  group pseudovariety is h-strong.
\end{Cor}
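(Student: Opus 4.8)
The plan is to obtain Corollary~\ref{c:H} as an immediate application of Corollary~\ref{c:transfer-up}, taking the ambient pseudovariety to be $\pv U=\pv M$ and the intermediate pseudovariety to be $\pv V=\pv G$. For this choice the two hypotheses of Corollary~\ref{c:transfer-up} are already in hand: within $\pv M$, the pseudovariety $\pv G$ is defined by the single pseudoidentity $x^\omega=1$, which is h-strong within $\pv M$ by Theorem~\ref{t:G}; and $\pv G$ is strong by Corollary~\ref{c:conjecture-group-case}. Corollary~\ref{c:transfer-up} then yields that every set of $\pv M$-pseudoidentities from which $x^\omega=1$ is provable is h-strong within $\pv M$.

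The only thing left to verify is that a set $\Sigma$ of $\pv M$-pseudoidentities that defines a group pseudovariety does prove $x^\omega=1$. Here, ``defines a group pseudovariety'' means $\op\Sigma\cl=\op\Sigma\cl_\pv M$ is contained in $\pv G=\op x^\omega=1\cl_\pv M$, so $\op\Sigma\cl$ satisfies $x^\omega=1$. Since $x^\omega=1$ is t-strong by Proposition~\ref{p:t-strong}(i), it is provable from $\Sigma$. Combining this with the previous paragraph gives that $\Sigma$ is h-strong within $\pv M$, which is the assertion of the corollary.

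I do not anticipate any real obstacle: the corollary just assembles Theorem~\ref{t:G}, Corollary~\ref{c:conjecture-group-case}, Corollary~\ref{c:transfer-up}, and the t-strongness of $x^\omega=1$. The one point requiring a moment's care is bookkeeping about ambient pseudovarieties --- all occurrences of ``provable'' are meant within $\pv M$ --- together with reading ``group pseudovariety'' as ``pseudovariety whose members are all groups'', i.e.\ a subpseudovariety of $\pv G$, which is precisely the hypothesis needed to invoke t-strongness of $x^\omega=1$ and to apply Corollary~\ref{c:transfer-up}.
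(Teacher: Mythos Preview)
Your proposal is correct and takes essentially the same approach as the paper: both assemble Theorem~\ref{t:G}, Corollary~\ref{c:conjecture-group-case}, and Proposition~\ref{p:t-strong}(\ref{item:G-t-strong}). The only cosmetic difference is that the paper invokes Proposition~\ref{p:transfer-up} directly (where t-strongness of $x^\omega=1$ is one of the hypotheses), while you go through Corollary~\ref{c:transfer-up} and then use t-strongness separately to check the provability hypothesis; these amount to the same thing.
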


\begin{proof}
  It suffices to apply Proposition~\ref{p:transfer-up} taking into
  account Theorem~\ref{t:G}, Corollary~\ref{c:conjecture-group-case},
  and Proposition~\ref{p:t-strong}(\ref{item:G-t-strong}).
\end{proof}

\section{The completely simple semigroup case}
\label{sec:CS}

Let \pv{CS} be the pseudovariety of all finite completely simple
semigroups. It is defined, for instance, by the pseudoidentity
$(xy)^\omega x=x$. By a well-known theorem of Rees, completely simple
semigroups are precisely those that admit a Rees matrix representation
$\Cl M(I,G,\Lambda,P)$, where $I$ and $\Lambda$ are sets, $G$~is a
group, and $P:\Lambda\times I\to G$ is a function (the \emph{sandwich
  matrix}, the image $P(\lambda,i)$ being usually denoted
$p_{\lambda,i}$); as a set, it is the Cartesian product $I\times
G\times \Lambda$, and multiplication is given by the formula
$$(i,g,\lambda)(j,h,\mu)=(i,gp_{\lambda,j}h,\mu).$$
Assuming that $1$ is a common element of~$I$ and~$\Lambda$, the sandwich
matrix may be supposed to be \emph{normalized} in the sense that
$p_{1,i}=p_{\lambda,1}=1$ for all $i\in I$ and $\lambda\in\Lambda$.

The purpose of this section is to establish that \pv{CS} is strong.
The key ingredient is the following characterization of the
congruences on a Rees matrix semigroup which may be extracted
from~\cite[Theorem~10.48]{Clifford&Preston:1967}.

\begin{Thm}
  \label{t:CS-congruences}
  Let $S=\Cl M(I,G,\Lambda,P)$ be a Rees matrix semigroup and let $\rho$
  be a congruence on~$S$. Consider the relations
  \begin{align*}
    \rho_1&=\{(i,j)\in I\times I: (i,1,1)\mathrel{\rho}(j,1,1)\} \\
    \rho_2&=\{(\lambda,\mu)\in \Lambda\times\Lambda:
                  (1,1,\lambda)\mathrel{\rho}(1,1,\mu)\} \\
    N_\rho&=\{g\in G: (1,g,1)\mathrel{\rho}(1,1,1)\}.
  \end{align*}
  Then $\rho_1$ (respectively $\rho_2$) is an equivalence relation on
  the set $I$ (resp.~$\Lambda$) and $N$ is a normal subgroup of~$G$
  such that
  \begin{align}
    \label{eq:CS-cong-1}
    (i,j)\in\rho_1&\implies p_{\lambda,i}N_\rho=p_{\lambda,j}N_\rho \\
    \label{eq:CS-cong-2}
    (\lambda,\mu)\in\rho_2&\implies p_{\lambda,i}N_\rho=p_{\mu,i}N_\rho.
  \end{align}
  Conversely, for every triple $\tau=(\rho_1,\rho_2,N_\rho)$, where
  $\rho_1$ (resp.~$\rho_2$) is an equivalence relation on the set $I$
  (resp.~$\Lambda$) and $N_\rho$ is a normal subgroup of~$G$,
  satisfying properties \eqref{eq:CS-cong-1} and~\eqref{eq:CS-cong-2},
  the relation
  $$\rho_\tau=\left\{\bigl((i,g,\lambda),(j,h,\mu)\bigr)\in S\times S:
    i\mathrel{\rho_1}j,\ 
    \lambda\mathrel{\rho_2}\mu,\ 
    gN_\rho=hN_\rho\right\}$$
  is a congruence on~$S$ and every congruence on $S$ is of this form.
\end{Thm}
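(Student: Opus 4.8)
The plan is to treat the two directions separately, using throughout the multiplication formula for $S=\Cl M(I,G,\Lambda,P)$ together with the normalization $p_{1,i}=p_{\lambda,1}=1$. This normalization is the key device: it lets one ``move'' between the three coordinates of an element by multiplying a $\rho$-related pair on the left or on the right by a suitably chosen element of the form $(1,g,1)$, $(1,1,\lambda)$, or $(i,1,1)$, always landing on the desired coordinates without introducing spurious sandwich entries.

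For the direct implication, I would first note that $\rho_1$ and $\rho_2$ are the preimages of~$\rho$ under the maps $i\mapsto(i,1,1)$ and $\lambda\mapsto(1,1,\lambda)$, hence are equivalence relations with no extra work. To see that $N_\rho$ is a subgroup, multiply pairs of the form $(1,g,1)\mathrel{\rho}(1,1,1)$: using $p_{1,1}=1$ one obtains $(1,gh,1)\mathrel{\rho}(1,1,1)$, and multiplying $(1,g,1)\mathrel{\rho}(1,1,1)$ on the left by $(1,g^{-1},1)$ yields $(1,1,1)\mathrel{\rho}(1,g^{-1},1)$; normality is obtained by conjugating with $(1,h,1)$ and $(1,h^{-1},1)$. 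Property~\eqref{eq:CS-cong-1} comes from multiplying $(i,1,1)\mathrel{\rho}(j,1,1)$ on the left by $(1,1,\lambda)$, which gives $(1,p_{\lambda,i},1)\mathrel{\rho}(1,p_{\lambda,j},1)$ and hence $p_{\lambda,j}^{-1}p_{\lambda,i}\in N_\rho$; property~\eqref{eq:CS-cong-2} is the dual, obtained by multiplying $(1,1,\lambda)\mathrel{\rho}(1,1,\mu)$ on the right by $(i,1,1)$. It then remains to prove that $\rho$ coincides with $\rho_\tau$ for $\tau=(\rho_1,\rho_2,N_\rho)$. For the inclusion $\rho\subseteq\rho_\tau$, given $(i,g,\lambda)\mathrel{\rho}(j,h,\mu)$, multiplying on the right by $(1,1,1)$ kills the $\Lambda$-coordinates and leaves $(i,g,1)\mathrel{\rho}(j,h,1)$; multiplying this on the left by $(1,g^{-1},1)$ gives $g^{-1}h\in N_\rho$, while multiplying it on the right by $(1,g^{-1},1)$ gives $(i,1,1)\mathrel{\rho}(j,hg^{-1},1)$, whence $i\mathrel{\rho_1}j$ after absorbing $hg^{-1}$ (which lies in $N_\rho$ by normality, being a conjugate of $g^{-1}h$); the relation $\lambda\mathrel{\rho_2}\mu$ is obtained symmetrically by first multiplying on the left by $(1,1,1)$. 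For the reverse inclusion $\rho_\tau\subseteq\rho$, I would exhibit a chain $(i,g,\lambda)\mathrel{\rho}(j,g,\lambda)\mathrel{\rho}(j,h,\lambda)\mathrel{\rho}(j,h,\mu)$: the first step multiplies $(i,1,1)\mathrel{\rho}(j,1,1)$ on the right by $(1,g,\lambda)$, the second uses $(1,g^{-1}h,1)\mathrel{\rho}(1,1,1)$ multiplied on the left by $(j,g,1)$ and then on the right by $(1,1,\lambda)$, and the third multiplies $(1,1,\lambda)\mathrel{\rho}(1,1,\mu)$ on the left by $(j,h,1)$.

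For the converse, $\rho_\tau$ is an equivalence relation because it is defined coordinatewise by $\rho_1$, $\rho_2$ and the coset relation on~$G$. Compatibility with multiplication is the only substantive point: given $(i,g,\lambda)\mathrel{\rho_\tau}(i',g',\lambda')$ and $(j,h,\mu)\mathrel{\rho_\tau}(j',h',\mu')$, the $I$- and $\Lambda$-coordinates of the products $(i,gp_{\lambda,j}h,\mu)$ and $(i',g'p_{\lambda',j'}h',\mu')$ are $\rho_1$- and $\rho_2$-related by hypothesis, and for the group coordinate one uses that $N_\rho$ is normal, so that $G\to G/N_\rho$ is a homomorphism; thus $gp_{\lambda,j}h$ and $g'p_{\lambda',j'}h'$ have the same image once $p_{\lambda,j}$ and $p_{\lambda',j'}$ do, and $p_{\lambda,j}N_\rho=p_{\lambda,j'}N_\rho=p_{\lambda',j'}N_\rho$ follows from~\eqref{eq:CS-cong-1} applied to $j\mathrel{\rho_1}j'$ and~\eqref{eq:CS-cong-2} applied to $\lambda\mathrel{\rho_2}\lambda'$. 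Finally, the assertion that every congruence on~$S$ is of this form is exactly the equality $\rho=\rho_\tau$ proved in the first part.

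The main obstacle I anticipate is purely organizational: in the inclusion $\rho\subseteq\rho_\tau$ one must disentangle all three coordinates at once through a sequence of one-sided multiplications, each of which has to invoke the normalization $p_{1,i}=p_{\lambda,1}=1$ in the right place. No individual computation is hard, but there are enough of them, with enough room to misplace a factor or a side, that the argument must be written out carefully; it is also worth recording explicitly that finiteness of $S$ is never used, only that $G$ is a group.
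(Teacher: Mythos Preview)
Your proposal is correct and self-contained; each of the one-sided multiplications you outline does exactly what you claim, thanks to the normalization $p_{1,i}=p_{\lambda,1}=1$, and the ``absorption'' step (passing from $(i,1,1)\mathrel{\rho}(j,hg^{-1},1)$ to $i\mathrel{\rho_1}j$ via $hg^{-1}\in N_\rho$) is justified exactly as you indicate.

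There is, however, nothing to compare against: the paper does not prove Theorem~\ref{t:CS-congruences}. It is stated without proof as a classical result ``which may be extracted from~\cite[Theorem~10.48]{Clifford&Preston:1967}'' and then used as a black box in the proof of Theorem~\ref{t:conjecture-CS}. So your argument supplies what the paper deliberately omits; it is in fact the standard hands-on verification one would give, and your closing remark that finiteness plays no role is accurate and worth keeping.
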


We may now proceed as in the proof of Theorem~\ref{t:conjecture-G} to
obtain our next result. For a set $\Sigma$ of
\pv{CS}-pseudoidentities, consider the closed congruence
$\rho=\tilde{\Sigma}$ generated by the pairs of the form
$(\varphi(u),\varphi(v))$ with $u=v$ in~$\Sigma$, $u,v\in\Om B{CS}$,
and $\varphi:\Om B{CS}\to\Om A{CS}$ a continuous homomorphism.

\begin{Thm}
  \label{t:conjecture-CS}
  The pseudovariety \pv{CS} is strong.
\end{Thm}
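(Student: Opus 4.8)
The plan is to mimic the proof of Theorem~\ref{t:conjecture-G}, replacing ``clopen normal subgroup'' by ``congruence of finite index'' and using the structure theorem for congruences on Rees matrix semigroups (Theorem~\ref{t:CS-congruences}) to control the relevant saturation. Fix a set $\Sigma\cup\{u=v\}$ of \pv{CS}-pseudoidentities, say over the alphabet $A$, and suppose $\op\Sigma\cl_{\pv{CS}}$ satisfies $u=v$; we must show $(u,v)\in\tilde\Sigma$. Write $\rho=\tilde\Sigma$ on $\Om A{CS}$; by Proposition~\ref{p:gen-provable-inequalities} it is a closed fully invariant congruence. Identifying $\Om A{CS}$ with a Rees matrix semigroup $\Cl M(I,G,\Lambda,P)$ over suitable profinite $I,\Lambda$ and the free profinite group $G=\Om AG$ (with normalized, continuous sandwich matrix), the congruence $\rho$ corresponds by Theorem~\ref{t:CS-congruences} to a triple $(\rho_1,\rho_2,N_\rho)$ where $\rho_1,\rho_2$ are closed equivalence relations and $N_\rho$ is a closed normal subgroup of~$G$, subject to the compatibility conditions \eqref{eq:CS-cong-1} and~\eqref{eq:CS-cong-2}.

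The heart of the argument, as in the group case, is a separation step: if $(u,v)\notin\rho$ then there is a finite-index closed congruence $\lambda\supseteq\rho$ on $\Om A{CS}$ with $(u,v)\notin\lambda$. To build $\lambda$, I would work componentwise on the triple. First, using compactness of $\Om A{CS}$ and that $\rho$ is closed, cover the complement of the $\rho$-class of one side by finitely many basic clopen ``boxes'' and intersect, exactly as in Theorem~\ref{t:conjecture-G}, to get a clopen congruence-like approximation; then correct it to an honest congruence by passing to the triple description. Concretely: choose a clopen normal subgroup $K\le G$ with $N_\rho\subseteq K$ that still separates the $G$-components of $u$ and $v$ when these lie in $\rho_1$- and $\rho_2$-equivalent positions; replace $K$ by $N_\rho K$ so that $N_\rho\subseteq N:=N_\rho K$ is clopen normal. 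Then $\rho_1$ and $\rho_2$ need to be coarsened to clopen \emph{finite-index} equivalence relations $\sigma_1\supseteq\rho_1$, $\sigma_2\supseteq\rho_2$ compatible with~$N$ in the sense of \eqref{eq:CS-cong-1}--\eqref{eq:CS-cong-2}; since there are only finitely many cosets of~$N$, the relations ``$i,j$ give the same coset $p_{\lambda,i}N$ for all $\lambda$'' already have finite index, and one takes $\sigma_1$ to be a clopen finite-index refinement of that relation containing $\rho_1$ (it contains $\rho_1$ by \eqref{eq:CS-cong-1}), and dually for $\sigma_2$. Shrinking $N$ further if necessary keeps the separation of $u$ and $v$ while restoring compatibility. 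The triple $(\sigma_1,\sigma_2,N)$ then defines, by the converse part of Theorem~\ref{t:CS-congruences}, a finite-index closed congruence $\lambda=\rho_\tau$ on $\Om A{CS}$ with $\rho\subseteq\lambda$ and $(u,v)\notin\lambda$.

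Once $\lambda$ is produced, the conclusion is routine: the quotient $S_0=\Om A{CS}/\lambda$ is a finite completely simple semigroup, the canonical projection $q:\Om A{CS}\to S_0$ is a continuous homomorphism, and since $\lambda\supseteq\rho=\tilde\Sigma$, every pseudoidentity in $\Sigma$ holds in $S_0$ (because for $(p,p')\in\tilde\Sigma$ we have $q(p)=q(p')$, and the evaluation of a \pv{CS}-pseudoidentity from $\Sigma$ in $S_0$ factors through $q$ by fully-invariance of $\tilde\Sigma$). Hence $S_0\in\op\Sigma\cl_{\pv{CS}}$, so by hypothesis $S_0$ satisfies $u=v$, giving $q(u)=q(v)$, i.e.\ $(u,v)\in\lambda$, a contradiction. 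Therefore $(u,v)\in\rho=\tilde\Sigma$, which is exactly provability of $u=v$ from~$\Sigma$; since $\Sigma\cup\{u=v\}$ was arbitrary, \pv{CS} is strong.

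The main obstacle is the middle paragraph: coarsening $\rho_1$ and $\rho_2$ to finite-index clopen equivalence relations while simultaneously maintaining the sandwich-matrix compatibility conditions \eqref{eq:CS-cong-1} and~\eqref{eq:CS-cong-2} with the chosen $N$, and doing so without destroying the separation of $u$ and~$v$. In the pure group case this issue is absent because there is only one ``component''; here one must interleave the choices of $N$, $\sigma_1$, $\sigma_2$, likely by first fixing a clopen $N\supseteq N_\rho$ fine enough to separate the relevant group coordinates, then letting $\sigma_i$ be the (automatically finite-index) relation ``same $p$-coset modulo $N$'' intersected with a clopen finite-index refinement above $\rho_i$, and finally checking that this triple still separates $(u,v)$ — shrinking $N$ once more if the separation was lost in an index/coset coordinate rather than a group coordinate. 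Verifying that a single consistent choice exists is the one genuinely new piece of bookkeeping beyond Theorem~\ref{t:conjecture-G}; everything else transfers mutatis mutandis.
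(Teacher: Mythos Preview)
Your overall strategy---argue by contradiction, use Theorem~\ref{t:CS-congruences} to translate $\tilde\Sigma$ into a triple, then find a finite-index congruence above $\tilde\Sigma$ that still separates $u$ and $v$---is exactly the paper's. But you are missing the one structural input that makes the argument go through cleanly, and this is precisely what creates the ``main obstacle'' you spend the last paragraph worrying about.

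The paper invokes \cite{Almeida:1991d}: $\Om A{CS}\cong\Cl M(A,\Om XG,A,P)$ where the index sets are the \emph{finite} alphabet $A$ itself, and the maximal subgroup is $\Om XG$ for $X=A\cup(A\setminus\{a_0\})^2$ (not $\Om AG$). Once you know $I=\Lambda=A$ is finite, there is nothing to coarsen: $\rho_1$ and $\rho_2$ are already finite-index equivalence relations on $A$. The proof then splits into two trivial cases. If $u,v$ differ in an index coordinate, project onto the finite rectangular band $A/\rho_1\times A/\rho_2$; its kernel contains $\rho$, so it lies in $\op\Sigma\cl$, yet it separates $u$ from $v$. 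If $u,v$ agree in both index coordinates, only the group coordinate differs; then, exactly as in Theorem~\ref{t:conjecture-G}, choose a clopen normal $K\supseteq N_\rho$ in $\Om XG$ separating the group coordinates, and observe that the triple $(\rho_1,\rho_2,K)$ automatically satisfies \eqref{eq:CS-cong-1}--\eqref{eq:CS-cong-2} (enlarging the normal subgroup can only make these conditions easier), giving a finite-index congruence $\bar\rho\supseteq\rho$ with $(u,v)\notin\bar\rho$.

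By contrast, your proposed fix---defining $\sigma_1$ as ``$i,j$ give the same coset $p_{\lambda,i}N$ for all $\lambda$''---would not obviously have finite index if $\Lambda$ were infinite, and you give no argument that it separates $a$ from $c$ when $(a,c)\notin\rho_1$. So the bookkeeping you flag as unresolved really is unresolved in your setup; the paper simply sidesteps it by using the correct (finite-index-set) Rees matrix model of $\Om A{CS}$.
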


\begin{proof}
  Suppose that $u,v\in\Om A{CS}$ are such that the pseudoidentity
  $u=v$ is valid in~$\op\Sigma\cl$. We claim that $u=v$ is provable
  from~$\Sigma$, which amounts to the condition $u\mathrel{\rho}v$.

  Arguing by contradiction, suppose that $(u,v)\notin\rho$.
  By~\cite{Almeida:1991d}, there is an isomorphism $\psi:\Om A{CS}\to
  S=\Cl M(A,\Om XG,A,P)$ where, choosing an element $a_0$ from~$A$ and
  letting $A'=(A\setminus\{a_0\})^2$ be the Cartesian square, we have
  $X=A\cup A'$, $p_{a_0,b}=p_{b,a_0}=1$, and $p_{a,b}=(a,b)$ for each
  $(a,b)\in A'$. Note that the letter $a_0$ plays the role of $1$ in
  the normalization of the sandwich matrix.

  Consider the normal subgroup $N_\rho$ and the equivalence relations
  $\rho_1$ and $\rho_2$ as defined in Theorem~\ref{t:CS-congruences}.
  Note that $N_\rho$ is a closed normal subgroup of~\Om XG.

  Let $\psi(u)=(a,g,b)$ and $\psi(v)=(c,h,d)$. Since we are assuming
  that $(u,v)\notin\rho$, at least one of the following conditions
  must hold:
  $$(a,c)\notin\rho_1,\ %
  (b,d)\notin\rho_2,\ %
  gN_\rho\ne hN_\rho.$$ In case one of the first two conditions holds,
  the mapping from %
  $S$ onto the rectangular band $T=A/\rho_1\times A/\rho_2$ that maps
  each triple $(x,w,y)\in S$ to~$(x/\rho_1,y/\rho_2)$ is a continuous
  homomorphism onto a semigroup from~\pv{CS} that distinguishes $u$
  and $v$. Moreover, its kernel congruence is contained in~$\rho$,
  which implies that $T\in\op\Sigma\cl$, contradicting the assumption
  that $\op\Sigma\cl$ satisfies $u=v$. Hence, we may assume that
  $(a,c)\in\rho_1$ and $(b,d)\in\rho_2$, so that $g^{\omega-1}h$ does
  not belong to~$N_\rho$. As in the proof of
  Theorem~\ref{t:conjecture-G}, we deduce that there is a clopen
  normal subgroup $K$ of~\Om XG such that $N_\rho\subseteq K$ and
  $g^{\omega-1}h\notin K$. Since $K$ contains $N_\rho$, the triple
  $(\rho_1,\rho_2,K)$ still satisfies the analogues of
  conditions~\eqref{eq:CS-cong-1} and~\eqref{eq:CS-cong-2}. Hence, by
  Theorem~\ref{t:CS-congruences}, it defines a congruence $\bar\rho$
  on~$S$. Since $K$~has finite index in~\Om XG, the congruence
  $\bar\rho$ has finite index in~$S$. The reader may easily verify
  that, since $\rho$ is a closed congruence on~$S$ and $K$ is a closed
  subgroup of~\Om XG, the congruence $\bar\rho$ is still closed,
  whence the natural mapping $S\to S/\bar\rho$ is a continuous
  homomorphism. As $\bar\rho$ contains~$\rho$, the quotient semigroup
  $S/\bar\rho$ satisfies $\Sigma$ but, by construction, fails the
  pseudoidentity $u=v$. This completes the proof.
\end{proof}

We proceed with some observations on the variety \Cl{CS} of completely
simple semigroups, consisting of algebras with a binary multiplication
and unary ``inversion'' $\_^{-1}$ satisfying the following identities,
where $u^0$ abbreviates $uu^{-1}$:
\begin{align}
  &(xy)z=x(yz),\ x^{-1}x=x^0,\ x^0x=x,\ (x^{-1})^{-1}=x
    \label{eq:CR-eqs}\\
  &(xyx)^0=x^0.
    \label{eq:CS-eqs}
\end{align}
The identities~\eqref{eq:CR-eqs} define the variety of completely
regular semigroups \cite{Petrich&Reilly:1999}. In the presence of
them, it is well known that the identity~\eqref{eq:CS-eqs} is
equivalent to
\begin{equation}
  \label{eq:CS-eqs-alt}
  (xy)^0x=x.
\end{equation}
Note that, when the inversion operation $\_^{-1}$ is interpreted as
$\_^{\omega-1}$ in a finite semigroup, $\_^0$ becomes $\_^\omega$ and
the first two identities in~\eqref{eq:CR-eqs} are verified while the
last two identities in~\eqref{eq:CR-eqs} are valid in every completely
regular finite semigroup.

\begin{Thm}
  \label{t:CS}
  For $\pv U=\pv S$, each of the sets of pseudoidentities
  $$\Sigma=\{(xyx)^\omega=x^\omega,x^{\omega+1}=x\}
  \text{ and } %
  \Gamma=\{(xy)^\omega x=x\}$$
  is h-strong.
\end{Thm}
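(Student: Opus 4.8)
The overall strategy is to treat $\Sigma$ directly by Proposition~\ref{p:general} and then to deduce the case of~$\Gamma$ from it using Theorem~\ref{t:conjecture-CS} together with Corollary~\ref{c:transfer-up}. To apply Proposition~\ref{p:general} to~$\Sigma$ with $\pv V=\pv{CS}$ and the signature $\kappa=\{\_\cdot\_,\_^{\omega-1}\}$, I need two inputs. The first is that $\pv{CS}$ is $\kappa$-reducible for the equation $x=y$; this is the completely simple analogue of the group reducibility result underlying Theorem~\ref{t:G}, and through the Rees matrix description $\Om A{CS}\simeq\Cl M(A,\Om XG,A,P)$ recalled in the proof of Theorem~\ref{t:conjecture-CS}, it rests ultimately on the reducibility of~$\pv G$. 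The second input is a basis of identities for the variety $\pv{CS}^\kappa$: as observed in the discussion preceding the theorem, when $\_^{-1}$ is read as $\_^{\omega-1}$ and hence $\_^0$ as $\_^\omega$, the variety $\pv{CS}^\kappa$ is exactly the variety $\Cl{CS}$, which is presented by the identities~\eqref{eq:CR-eqs} and~\eqref{eq:CS-eqs}.

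It then remains, for the $\Sigma$ case, to check that each of the identities~\eqref{eq:CR-eqs} and~\eqref{eq:CS-eqs} is provable from~$\Sigma$ in the sense of Section~\ref{sec:scheme}. Associativity and $x^{-1}x=x^0$ become, under the translation, identities valid in every finite semigroup, and so require no proof; the identity $(x^{-1})^{-1}=x$ becomes $(x^{\omega-1})^{\omega-1}=x$, obtained by composing the always-valid equality $(x^{\omega-1})^{\omega-1}=x^{\omega+1}$ with the hypothesis $x^{\omega+1}=x$; the identity $x^0x=x$ is precisely $x^{\omega+1}=x\in\Sigma$; and $(xyx)^0=x^0$ is precisely $(xyx)^\omega=x^\omega\in\Sigma$. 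By Proposition~\ref{p:general}, $\Sigma$ is h-strong.

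For $\Gamma=\{(xy)^\omega x=x\}$ I would show that $\Gamma$ proves every pseudoidentity of~$\Sigma$. Substituting $x$ for $y$ in $(xy)^\omega x=x$ and using $(x^2)^\omega=x^\omega$ yields $x^{\omega+1}=x$; with $x^{\omega+1}=x$ available, all the completely regular identities~\eqref{eq:CR-eqs} become provable from~$\Gamma$ in their $\_^{\omega-1}$ translation, exactly as in the previous paragraph, so that the well-known equivalence of~\eqref{eq:CS-eqs} with~\eqref{eq:CS-eqs-alt} modulo~\eqref{eq:CR-eqs} turns the standard derivation of $(xyx)^0=x^0$ from $(xy)^0x=x$ into an algebraic proof from~$\Gamma$ of $(xyx)^\omega=x^\omega$. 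Since $\pv{CS}$ is strong by Theorem~\ref{t:conjecture-CS} and $\Sigma$ is, by the previous paragraph, an h-strong basis of $\pv{CS}$ within~$\pv S$, Corollary~\ref{c:transfer-up} yields that $\Gamma$ is h-strong as well.

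The main obstacle is the first of the two inputs above: the availability of a proof that $\pv{CS}$ is $\kappa$-reducible for the equation $x=y$, which, as in the group case, is where the genuinely hard work lives. The remainder is routine bookkeeping, namely the careful passage between the variety operations $\_^{-1},\_^0$ and the implicit operations $\_^{\omega-1},\_^\omega$, and the elementary manipulations relating $(xyx)^\omega=x^\omega$, $x^{\omega+1}=x$ and $(xy)^\omega x=x$.
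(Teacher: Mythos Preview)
Your argument is correct and, for~$\Sigma$, essentially coincides with the paper's: both apply Proposition~\ref{p:general} with $\sigma=\kappa$, the $\kappa$-reducibility of~\pv{CS} for $x=y$, and the basis of~$\pv{CS}^\kappa$ coming from the variety~$\Cl{CS}$. One small point: your claim that $\pv{CS}^\kappa$ is exactly~$\Cl{CS}$ is not quite ``observed in the discussion preceding the theorem''; it needs the residual finiteness of free completely simple semigroups, which the paper invokes explicitly in its proof.

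For~$\Gamma$ the routes diverge slightly. The paper, having already checked that $\Gamma$ proves $x^{\omega+1}=x$ and that $(xyx)^\omega=x^\omega$ and $(xy)^\omega x=x$ are interderivable modulo the completely regular identities, simply applies Proposition~\ref{p:general} again, this time to~$\Gamma$. You instead show $\Gamma$ proves~$\Sigma$ and then appeal to Corollary~\ref{c:transfer-up} together with Theorem~\ref{t:conjecture-CS}. Both arguments are valid; the paper's is more economical since it reuses the same reducibility/basis input and avoids invoking the strongness of~\pv{CS}, whereas your route has the minor advantage of illustrating how the transfer machinery handles alternative bases once one h-strong basis is in hand.
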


\begin{proof}
  As implied by the entry for \pv{CS}
  in~\cite[Table~2]{Almeida&Steinberg:2000a}, the methods
  of~\cite{Almeida:1997a} show that the pseudovariety \pv{CS} is
  $\kappa$-reducible for graph systems of equations. In fact,
  in~\cite{Almeida:1997a} vertices were allowed to be constrained by
  the clopen subset $\{1\}\subseteq(\Om AS)^1$, which forces the
  corresponding variable to be evaluated by~$1$. It follows that
  \pv{CS} is $\kappa$-reducible for the equation $x=y$
  
  Consider next the variety $\pv{CS}^\kappa$. Since free completely
  simple semigroups are residually finite
  \cite[Proposition~2.5]{Pastijn&Trotter:1988}, in view of the above
  remarks a basis of identities for~$\pv{CS}^\kappa$ is given by
  \begin{align*}
    &(xy)z=x(yz),\
      x^{\omega-1}x=xx^{\omega-1},\
      xx^{\omega-1}x=x,\
      (x^{\omega-1})^{\omega-1}=x \\
    &xyx(xyx)^{\omega-1}=xx^{\omega-1}.
  \end{align*}
  All the identities in the first line are obviously provable
  from~$\Sigma$. On the other hand, substituting $x$ for $y$ gives a
  proof of $x^{\omega+1}=x$ from~$\Gamma$ and we already observed that
  in the presence of the identities in the above first line, the
  identities $(xyx)^\omega=x^\omega$ and $(xy)^\omega x=x$ are provable
  from each other. Thus, the result follows from
  Proposition~\ref{p:general}.
\end{proof}

Combining Theorems~\ref{t:conjecture-CS} and \ref{t:CS} with
Proposition~\ref{p:t-strong}(\ref{item:CS-t-strong}), and applying
Proposition~\ref{p:transfer-up}, we obtain the following result.

\begin{Cor}
  \label{c:CS}
  For $\pv U=\pv S$, every set $\Sigma$ of pseudoidentities such that
  $\op\Sigma\cl\subseteq\pv{CS}$ is h-strong.\qed
\end{Cor}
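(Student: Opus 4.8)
The plan is to obtain the corollary as a direct instance of Proposition~\ref{p:transfer-up}, taking the ambient pseudovariety $\pv U$ to be $\pv S$ and the subpseudovariety $\pv V$ to be $\pv{CS}$. To avoid a clash with the symbol $\Sigma$ used for a basis in that proposition, let me denote by $\Theta$ the set of $\pv S$-pseudoidentities given in the corollary, so that $\op\Theta\cl\subseteq\pv{CS}$; this inclusion is exactly the requirement $\op\Gamma\cl_\pv U\subseteq\pv V$ asked of the set playing the role of~``$\Gamma$'' in Proposition~\ref{p:transfer-up}.

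First I would fix the basis of $\pv{CS}$ to be used: within $\pv S$, the pseudovariety $\pv{CS}$ is defined by the single pseudoidentity $(xy)^\omega x=x$. By Theorem~\ref{t:CS} (applied to the set $\Gamma=\{(xy)^\omega x=x\}$ there) this basis is h-strong within~$\pv S$, and by Proposition~\ref{p:t-strong}(\ref{item:CS-t-strong}) the pseudoidentity $(xy)^\omega x=x$ is t-strong (and, as the proof of that proposition makes explicit, t-strong with respect to the ambient pseudovariety $\pv S$). Hence $\{(xy)^\omega x=x\}$ meets both conditions imposed on ``$\Sigma$'' in Proposition~\ref{p:transfer-up}: it is a basis of $\pv{CS}$ consisting of $\pv S$-pseudoidentities, h-strong within~$\pv S$, whose elements are t-strong.

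Next I would supply the remaining hypothesis of Proposition~\ref{p:transfer-up}. Let $\pi:\Om AS\to\Om A{CS}$ be the natural continuous homomorphism and put $\Theta'=\{\pi(u)=\pi(v):(u=v)\in\Theta\}$. By Theorem~\ref{t:conjecture-CS} the pseudovariety $\pv{CS}$ is strong; in particular every set of $\pv{CS}$-pseudoidentities is h-strong within~$\pv{CS}$, so $\Theta'$ is h-strong within~$\pv{CS}$. All the hypotheses of Proposition~\ref{p:transfer-up} being verified, it yields that $\Theta$ is h-strong within~$\pv S$, which is the assertion of the corollary.

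Since every ingredient is already established, there is no genuine obstacle; the only point demanding care is bookkeeping — reconciling the conflicting uses of the symbols $\Sigma$ and $\Gamma$ between the corollary and Proposition~\ref{p:transfer-up}, and checking that the chosen basis $\{(xy)^\omega x=x\}$ is h-strong \emph{and} t-strong with respect to the ambient pseudovariety $\pv S$ itself, rather than merely within some smaller pseudovariety, which is indeed what Theorem~\ref{t:CS} and Proposition~\ref{p:t-strong}(\ref{item:CS-t-strong}) assert.
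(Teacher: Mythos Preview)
Your proposal is correct and follows exactly the approach the paper takes: combine Theorem~\ref{t:conjecture-CS} (\pv{CS} is strong), Theorem~\ref{t:CS} (the basis $\{(xy)^\omega x=x\}$ is h-strong within~\pv S), and Proposition~\ref{p:t-strong}(\ref{item:CS-t-strong}) (that pseudoidentity is t-strong), then apply Proposition~\ref{p:transfer-up}. Your care about the symbol clash and about the ambient pseudovariety being~\pv S is well placed but does not go beyond what the paper already handles.
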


Taking into account the $\kappa$-reducibility for the equation $x=y$
of the pseudovariety $\pv{CR}=\op x^{\omega+1}=x\cl$, of all finite
completely regular semigroups, (cf.~\cite{Almeida&Trotter:2001}) and
the residual finiteness of free completely regular semigroups (viewed
as algebras in the signature $\{\_.\_,\, \_^{-1}\}$), the following
theorem can be proved in the same way as Theorem~\ref{t:CS}.

\begin{Thm}
  \label{t:CR}
  For $\pv U=\pv S$, the pseudoidentity $x^{\omega+1}=x$ is h-strong.
\end{Thm}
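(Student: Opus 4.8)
The plan is to follow the proof of Theorem~\ref{t:CS} and apply Proposition~\ref{p:general} with the implicit signature $\kappa$ and the subpseudovariety $\pv V=\pv{CR}=\op x^{\omega+1}=x\cl$ of~\pv S. Two facts are imported, as flagged just before the statement. First, \pv{CR} is $\kappa$-reducible for the equation $x=y$ (cf.~\cite{Almeida&Trotter:2001}). Second, the free completely regular semigroups, regarded as algebras with multiplication and a unary inversion $\_^{-1}$, are residually finite (see~\cite{Petrich&Reilly:1999}). Exactly as in the proof of Theorem~\ref{t:CS}, this residual finiteness identifies the variety $\pv{CR}^\kappa$ with the variety \Cl{CR} of all completely regular semigroups: the inclusion $\Cl{CR}\subseteq\pv{CR}^\kappa$ comes from residual finiteness, while $\pv{CR}^\kappa\subseteq\Cl{CR}$ follows from the observation recorded just before Theorem~\ref{t:CS}, that every finite completely regular semigroup satisfies the identities~\eqref{eq:CR-eqs} when $\_^{-1}$ is read as~$\_^{\omega-1}$. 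Hence a basis for $\pv{CR}^\kappa$ is given by~\eqref{eq:CR-eqs}.

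By Proposition~\ref{p:general} it then suffices to check that each of the four identities in~\eqref{eq:CR-eqs} is provable from $x^{\omega+1}=x$ inside the ambient pseudovariety~\pv S, and this is where what little work remains goes --- even less than for Theorem~\ref{t:CS}, because no transitive or topological closure steps are needed. Reading $\_^{-1}$ as $\_^{\omega-1}$ (so $\_^0$ becomes $\_^{\omega}$): the associativity law $(xy)z=x(yz)$ and the law $x^{-1}x=x^0$ are equalities between pseudowords that already hold in each $\Om AS$, so as pseudoidentities they are diagonal pairs, hence members of $\Sigma_0$ for $\Sigma=\{x^{\omega+1}=x\}$. The law $x^0x=x$ reads, over one variable, as $x^\omega x=x$, i.e.\ precisely $x^{\omega+1}=x$, so it is obtained from $\Sigma$ at step~$0$. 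Finally, $(x^{-1})^{-1}=x$ reads as $(x^{\omega-1})^{\omega-1}=x$; since $\_^{\omega-1}$ acts as group inversion on the unique maximal subgroup $H_\omega\cong\hat{\mathbb Z}$ of~\Om1S and $\omega+1$ is the inverse of $\omega-1$ there, $(x^{\omega-1})^{\omega-1}$ and $x^{\omega+1}$ are the same element of~\Om1S; thus this identity is once more the pseudoidentity $x^{\omega+1}=x$ itself and lies in $\Sigma_0$. Applying Proposition~\ref{p:general} then gives that $\{x^{\omega+1}=x\}$, and therefore the pseudoidentity $x^{\omega+1}=x$, is h-strong.

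The genuine content of the argument is entirely imported: the $\kappa$-reducibility of \pv{CR} for $x=y$ is the hard ingredient, ultimately resting on structural results about \pv{CR}-pointlikes, and the residual finiteness of free completely regular semigroups is what makes it possible to pin down a manageable basis of $\pv{CR}^\kappa$. Within the proof scheme there is essentially nothing to do, which is the point of choosing the basis~\eqref{eq:CR-eqs}: unlike in the cases of~\pv J or~\pv{DA}, every identity in that basis is either valid throughout~\pv S or coincides, as a pseudoidentity, with the hypothesis $x^{\omega+1}=x$. The only step needing a moment's attention is the bookkeeping identification of $(x^{\omega-1})^{\omega-1}$ with $x^{\omega+1}$ as elements of~\Om1S.
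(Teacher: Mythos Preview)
Your proof is correct and follows essentially the same approach as the paper, which simply remarks that the argument of Theorem~\ref{t:CS} goes through verbatim once one inputs the $\kappa$-reducibility of~\pv{CR} for $x=y$ and the residual finiteness of free completely regular semigroups. Your verification that each identity in~\eqref{eq:CR-eqs} is either valid in~\pv S or coincides with $x^{\omega+1}=x$ (in particular the identification $(x^{\omega-1})^{\omega-1}=x^{\omega+1}$ in~\Om1S) matches the paper's treatment in Theorem~\ref{t:CS}.
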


\section{The commutative case}
\label{sec:Com}

Our next example is the usual basis of the pseudovariety \pv{Com} of
all finite commutative monoids.

\begin{Thm}
  \label{t:Com}
  For $\pv U=\pv M$, the identity $xy=yx$ is h-strong.
\end{Thm}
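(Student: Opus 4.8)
The plan is to apply Proposition~\ref{p:general} with ambient pseudovariety $\pv U=\pv M$, with $\pv V=\pv{Com}$, and with $\Sigma=\{xy=yx\}$. Two ingredients are needed: that $\pv{Com}$ is $\sigma$-reducible for the equation $x=y$ for a suitable implicit signature $\sigma$, and that the variety $\pv{Com}^\sigma$ admits a basis of $\sigma$-identities each of which is provable from $xy=yx$ in the sense of Section~\ref{sec:scheme}. I would take $\sigma=\kappa$; this is the form in which reducibility results for commutative pseudovarieties appear in the literature, and it is also the natural signature here, since (multiplication being continuous and $\hat{\mathbb{N}}^A$ compact) every element of $\Om A{Com}$ is a commutative product $\prod_{a\in A}a^{\alpha_a}$ with $\alpha_a\in\hat{\mathbb{N}}$, and such products of $\kappa$-powers are dense already when the $\alpha_a$ are taken in the $\kappa$-closure $\mathbb{N}\cup(\omega+\mathbb{Z})$ of $\mathbb{N}$ in $\hat{\mathbb{N}}$. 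Exactly as in the proofs of Theorems~\ref{t:A}, \ref{t:R} and~\ref{t:J}, the $\kappa$-reducibility of $\pv{Com}$ for $x=y$ is the single genuinely analytic input and is used as a black box; the rest is verification inside the proof scheme.

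Granting reducibility, it remains to exhibit a basis of $\pv{Com}^\kappa$ and to prove each of its identities from $xy=yx$. A basis may be taken to consist of the commutative-monoid axioms $(xy)z=x(yz)$, $x1=1x=x$, $xy=yx$, together with the usual idempotency laws for $\omega$-powers ($(x^\omega)^\omega=x^\omega$, $x^\omega x^\omega=x^\omega$, $(x^r)^\omega=x^\omega$ for $r\ge2$) and the laws governing the arithmetic of $\_\vphantom{|}^{\omega-1}$ relative to $\_\vphantom{|}^{\omega}$, all of which hold in every finite monoid once $\omega$-powers are interpreted and hence need no proof in our setup with ambient pseudovariety~$\pv M$, and finally the ``distributivity'' law
$$(xy)^{\omega-1}=x^{\omega-1}y^{\omega-1},$$
from which $(xy)^\omega=x^\omega y^\omega$ follows by multiplying both sides by $xy$. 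Now $xy=yx$ is the hypothesis itself, so it already lies in $\Sigma_0$; the associativity, unit and $\omega$-arithmetic laws are valid in all finite monoids, so they require no proof; and the distributivity law is handled by the now-familiar two-step argument: from $xy=yx$ one proves algebraically $(xy)^{n!-1}=x^{n!-1}y^{n!-1}$ for every positive integer~$n$, and a single limiting step then yields $(xy)^{\omega-1}=x^{\omega-1}y^{\omega-1}$. With every basis identity proved from $xy=yx$, Proposition~\ref{p:general} gives that $xy=yx$ is h-strong.

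The main obstacle, as in the other applications of Proposition~\ref{p:general}, lies in securing the reducibility input together with a compatible recursive basis for $\pv{Com}^\kappa$: the commutative case is subtle precisely because $\Om A{Com}$ is genuinely richer than a power of~$\hat{\mathbb{N}}$ --- finite commutative monoids need not be residually monogenic, so $\pv{Com}$ satisfies nontrivial multivariable pseudoidentities --- which is exactly what makes $\kappa$-reducibility for $x=y$ a nonformal assertion. If a published reducibility statement in precisely the form required is not directly citable, the fallback is a direct argument along the lines of Theorems~\ref{t:conjecture-G} and~\ref{t:conjecture-CS}, replacing the explicit congruence descriptions used there by the structure theory of congruences on finite commutative semigroups; this is feasible but considerably heavier, so I would pursue the reducibility route first.
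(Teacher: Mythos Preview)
Your approach is essentially the paper's: apply Proposition~\ref{p:general} with $\sigma=\kappa$, invoke $\kappa$-reducibility of $\pv{Com}$ for $x=y$ from the literature, and verify that a basis of $\pv{Com}^\kappa$ is provable from $xy=yx$, the only identity requiring work being $(xy)^{\omega-1}=x^{\omega-1}y^{\omega-1}$, obtained as the limit of the algebraically provable $(xy)^{n!-1}=x^{n!-1}y^{n!-1}$. One side remark is mistaken, though it does not affect the argument: in fact $\Om A{Com}\cong\hat{\mathbb{N}}^A$ as profinite monoids (the universal property is immediate since any map $A\to M$ into a finite commutative monoid extends coordinatewise), so it is not ``genuinely richer'' than the power.
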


\begin{proof}
  Although \cite{Steinberg:1997a} deals with the notion of
  hyperdecidability, which is weaker than $\sigma$-reducibility
  provided the implicit signature $\sigma$ has suitable algorithmic
  properties
  (cf.~\cite{Almeida&Steinberg:2000a,Almeida&Steinberg:2000b}), it is
  observed in~\cite{Almeida&Steinberg:2000a} that the same methods
  show that \pv{Com} is $\kappa$-reducible, in particular for the
  equation $x=y$. We claim that the following is a basis of identities
  for the variety $\pv{Com}^\kappa$, as usual, we write $x^\omega$ for
  the product $x^{\omega-1}x$:
  \begin{align*}
      &x^{\omega-1}x^\omega=x^{\omega-1},\
        (x^{\omega-1})^{\omega-1}=x^{\omega+1} \\
      &(xy)z=x(yz),\ x1=1x=x \\
      &xy=yx \\
      &(xy)^{\omega-1}=x^{\omega-1}y^{\omega-1}.
  \end{align*}
  Indeed all such identities are clearly valid in~\pv{Com}. On the
  other hand, using the above identities, one may reduce every
  $\kappa$-term to one of the form $u=x_1^{\varepsilon_1}\cdots
  x_n^{\varepsilon_n}$, where each exponent $\varepsilon_i$ is either
  $1$ or~$\omega-1$. Moreover, we may rearrange the factors so that
  the powers with the same base are not separated by other powers. On
  the other hand, powers with the same base can collected together to
  powers of one of the forms $x^n$ or $x^{\omega-1}x^n$, where $n$ is
  a nonnegative integer, or $(x^{\omega-1})^n$, where $n\ge2$. For
  convenience, we write $x^{\omega-1}x^n$ as $x^{\omega-1+n}$ and
  $(x^{\omega-1})^n$ as $x^{(\omega-1)n}$. In this form, we say that
  we have a $\kappa$-term in \emph{completely reduced form}. Consider
  two $\kappa$-terms in completely reduced form
  $u=a_1^{\varepsilon_1}\cdots a_n^{\varepsilon_n}$ and
  $v=a_1^{\delta_1}\cdots a_n^{\delta_n}$ over the alphabet
  $\{a_1,\ldots,a_n\}$ and assume that the identity $u=v$ is valid
  in~\pv{Com}. Then, substituting $1$ for every variable but one, we
  obtain an identity valid in~\pv{Com} of the form
  $x^\varepsilon=x^\delta$ with $\varepsilon$ and $\delta$ exponents
  of one of the forms $n$, $\omega-1+n$ or $(\omega-1)n$.
  By considering a suitable monogenic finite semigroup, one
  immediately verifies that $\varepsilon=\delta$. This proves the
  claim.

  To conclude the proof, it remains to observe that the last identity
  in the above basis is provable from $xy=yx$. Indeed, each identity
  $(xy)^{n!-1}=x^{n!-1}y^{n!-1}$ can be easily proved and the result
  follows by taking limits.
\end{proof}

We next show that the pseudoidentity $xy=yx$ also defines a strong
pseudovariety.

\begin{Thm}
  \label{t:strong-Com}
  The pseudovariety \pv{Com} is strong.
\end{Thm}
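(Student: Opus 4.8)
The plan is to reduce the assertion to a structural fact about $\Om A{Com}$ and then invoke the criterion recorded in Section~\ref{sec:scheme}, to the effect that a set $\Sigma$ of pseudoidentities is h-strong if and only if $\tilde\Sigma$ is a profinite congruence, that is, $\Om A{Com}/\tilde\Sigma$ is a profinite algebra for every finite alphabet~$A$. By Proposition~\ref{p:gen-provable-inequalities}, $\tilde\Sigma$ is always a closed congruence on $\Om A{Com}$; hence it suffices to prove that \emph{the quotient of $\Om A{Com}$ by an arbitrary closed congruence is profinite}, for this at once gives that every set $\Sigma$ of \pv{Com}-pseudoidentities is h-strong. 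The first step is the identification $\Om A{Com}\cong\prod_{a\in A}\Om 1{Com}=\hat{\mathbb N}^A$, with coordinatewise operations, which is immediate from the universal property of relatively free profinite algebras together with commutativity (recall that $1$-generated monoids are commutative, so that $\Om 1{Com}=\Om 1M=\hat{\mathbb N}$).

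So let $\theta$ be a closed congruence on $S=\hat{\mathbb N}^A$, with $A=\{a_1,\dots,a_n\}$, and let $q\colon S\to Q=S/\theta$ be the quotient; then $Q$ is a compact commutative monoid (Hausdorff because $\theta$ is closed), and since a compact totally disconnected monoid is profinite, it is enough to show that $Q$ is totally disconnected. The minimal ideal of $S$ is the closed subgroup $K=\hat{\mathbb Z}^A$, whose identity is the idempotent $(a_1\cdots a_n)^\omega$; therefore $K(Q)=q(K)$ is the minimal ideal of~$Q$, and, as $q|_K$ is a continuous homomorphism from the profinite abelian group $\hat{\mathbb Z}^A$ onto $K(Q)$ with closed kernel, $K(Q)$ is itself a profinite abelian group. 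Moreover $K(Q)$ is a closed ideal of~$Q$. The reduction to be used is the elementary observation that if a compact monoid has a closed ideal $H$ such that both $H$ and the Rees quotient by $H$ are totally disconnected, then the monoid is totally disconnected: the connected component of a point $x$ maps to a single point of the Rees quotient, hence is contained in the corresponding fibre, which is either $\{x\}$ or $H$. Applying this with $H=K(Q)$, it remains to prove that the Rees quotient $R=Q/K(Q)$ is profinite; this $R$ is a compact commutative monoid with zero, and it is a quotient of $S$ by the closed congruence obtained from $\theta$ by collapsing $\hat{\mathbb Z}^A$ to the zero class.

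Proving that $R$ is profinite is the heart of the argument, and the main obstacle: in contrast with the group case, a quotient of a profinite algebra by a closed congruence need not be profinite, so the special ``locally finite at infinity'' shape of $\hat{\mathbb N}^A$ must be exploited. The finite-support part $S\setminus\hat{\mathbb Z}^A$ is an open subset of the compact metrizable---hence second countable---space $S$, so it is an increasing union $\bigcup_n C_n$ of clopen compact subsets of~$S$; one uses such an exhaustion to show that the zero of~$R$ has a neighbourhood basis of clopen sets (the images $q(C_n)$, once arranged to avoid the zero using that the zero class is a closed ideal of $S$ containing $\hat{\mathbb Z}^A$, turn out to be clopen in $R$ and to exhaust $R\setminus\{0\}$), the corresponding statement for the remaining points being immediate since $S$ is totally disconnected and $q$ is injective off the zero class. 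The delicate point---the control of how $\theta$ attaches the finite-support part to the minimal ideal, equivalently the $\theta$-saturation of the relevant clopen sets---is where the real work lies; a convenient way to organize it is by induction on $|A|$, peeling off one coordinate at a time and using at each stage the formally more general statement for $\hat{\mathbb N}^B\times G$ with $G$ a profinite abelian group (whose minimal ideal is again a group, so that the same reduction applies). Once $R$ is known to be profinite, the preceding paragraph yields that $Q$ is totally disconnected, hence profinite, which establishes the structural fact and thereby the theorem.
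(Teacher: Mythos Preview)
Your approach is substantially different from the paper's and, as written, has a genuine gap. The paper does not attempt the stronger assertion that \emph{every} closed congruence on $\Om A{Com}$ yields a profinite quotient. Instead it exploits the full invariance of $\tilde\Sigma$ to reduce immediately to one variable: a \pv{Com}-pseudoidentity $a_1^{\varepsilon_1}\cdots a_n^{\varepsilon_n}=a_1^{\delta_1}\cdots a_n^{\delta_n}$ is provably equivalent to the set $\{x^{\varepsilon_i}=x^{\delta_i}:i=1,\dots,n\}$. Then, via Lemma~\ref{l:1-var-pseudoidentities} and an ``index'' argument, it reduces to exponents lying in $\hat{\mathbb Z}$, shows by transfinite induction that the restriction of $\tilde\Sigma$ to $\hat{\mathbb Z}\subset\hat{\mathbb N}=\Om1{Com}$ coincides with $\widetilde{\pi(\Sigma)}$ computed in $\Om1G$, and finishes by invoking the already established strongness of \pv G (Corollary~\ref{c:conjecture-group-case}).

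The gap in your argument is exactly where you concede ``the real work lies''. You need the Rees quotient $R$ to be totally disconnected, but the justification offered is problematic: the assertion that ``$q$ is injective off the zero class'' is false if $q$ denotes the composite $S\to R$ (the congruence $\theta$ may well identify points of $S$ lying outside the $\theta$-saturation of $\hat{\mathbb Z}^A$), and circular if $q$ denotes the Rees map $Q\to R$ (you would then be assuming $Q$ is totally disconnected at those points, which is what you are proving). More fundamentally, for the images of your clopen exhaustion $C_n$ to be open in $R$ you need their $\theta$-saturations to be open in $S$, and nothing in the sketch produces this; the proposed induction on $|A|$ is not carried out, and it is not clear how peeling off one coordinate controls the interaction of $\theta$ with the remaining $\hat{\mathbb N}^B\times G$ factor. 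Whether every closed congruence on $\hat{\mathbb N}^A$ is profinite is itself a nontrivial question; the paper's route sidesteps it entirely by using the specific shape of $\tilde\Sigma$ (full invariance, compatibility with the retraction onto the group part) rather than treating an arbitrary closed congruence.
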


\begin{proof}
  First of all, we recall that elements of the free profinite monoid
  $\Om A{Com}$ over the set $A=\{a_1,\dots ,a_n\}$ can be written in
  the form $a_1^{\varepsilon_1}\cdots a_n^{\varepsilon_n}$, where
  $\varepsilon_1, \dots , \varepsilon_n\in \hat{\mathbb N}$. Each
  pseudoidentity $a_1^{\varepsilon_1}\cdots
  a_n^{\varepsilon_n}=a_1^{\delta_1}\cdots a_n^{\delta_n}$ is provably
  equivalent
  to the set of pseudoidentities
  $\{a_1^{\varepsilon_1}=a_1^{\delta_1}, \dots ,
  a_n^{\varepsilon_n}=a_n^{\delta_n}\}$. Therefore, we may assume that
  all pseudoidentities which are under consideration are over a single
  variable.

  Recall that $\Om 1{Com}$ and $\Om 1{M}$ are both isomorphic to
  $\hat{\mathbb{N}}$ via the mapping %
  $\varepsilon \mapsto x^\varepsilon$ and that
  $\hat{\mathbb{Z}}=\hat{\mathbb{N}}\setminus \mathbb{N}$ is
  isomorphic to $\Om 1G$.
  For the purpose of simplification of notation, we identify
  isomorphic structures, so, for example, we denote by $\pi$ the
  continuous homomorphisms $\pi : \Om 1{Com} \rightarrow \Om 1G$ given
  by the rule $\pi(x^\varepsilon)=x^{\omega+\varepsilon}$, which is
  formally a projection $\pi_{\pv G} : \Om 1{M} \rightarrow \Om 1G$
  after the identification $\Om 1{Com}= \Om 1{M}$. In this way, the
  group $\Om 1G$ can be viewed as a retract of the monoid $\Om
  1{Com}$.
  
  Let $\Sigma $ be a set of pseudoidentities in the variable $x$ and
  $x^\varepsilon=x^\delta$ be a nontrivial pseudoidentity satisfied
  by~$\op\Sigma\cl$. We need to show that $x^\varepsilon=x^\delta$
  belongs to $\tilde{\Sigma}$.
 
  By the \emph{finite index} $i(\Sigma)$ of $\Sigma$ is meant the
  minimal natural number $n$ such that $(x^n=x^\mu) \in \Sigma$ where
  $\mu\not= n$. If such $n\in \mathbb N$ does not exist, we say that
  $\Sigma$ has \emph{infinite index}.
 
  If $n$ is the index of~$\Sigma$, then in a nontrivial pseudoidentity
  $(x^n=x^\mu)\in\Sigma$ we have $\mu$ infinite or $n<\mu\in\mathbb
  N$. In both cases, in view of Lemma~\ref{l:1-var-pseudoidentities}
  the pseudoidentity $x^n=x^{\omega+n}$ is provable from~$\Sigma$.
  On the other hand, the monogenic monoid $C_{n,1}^1$
  belongs to $\op\Sigma\cl$, whence it satisfies
  $x^\varepsilon=x^\delta$. Hence, both $\varepsilon$ and $\delta$ are
  infinite or
  greater than or equal to $i(\Sigma)=n$. From
  Lemma~\ref{l:1-var-pseudoidentities}, it follows that
  $x^\varepsilon
  =x^{\omega+\varepsilon}$ and
  $x^\delta
  =x^{\omega+\delta}$ are
  provable from $\Sigma$. Therefore,
  $x^\varepsilon=x^\delta$ is provable from~$\Sigma$ if and only if so
  is $x^{\omega+\varepsilon}=x^{\omega+\delta}$. On the other hand, if
  $\Sigma$ has infinite index, then the monoid $C_{n,1}^1$ belongs to
  $\op\Sigma\cl$ for every $n\in\mathbb N$ and we see that
  $\varepsilon,\delta\not \in \mathbb N$. Altogether, we can deal just
  with the case $\varepsilon,\delta\in \hat{\mathbb Z}$.

  Before finishing the proof, we make one technical observation. Let
  $\rho$ be the restriction of the relation $\tilde{\Sigma}$ to the
  set $\{x^\lambda : \lambda\in \hat{\mathbb Z}\}$ and $\tau$ denote
  the relation $\widetilde{\pi(\Sigma)}$ on $\Om 1G$. We claim that
  these relations are equal, under our identification of underlying
  sets with $\hat{\mathbb Z}$. Since $\hat{\mathbb Z}$ is a closed
  ideal in $\hat{\mathbb N}$ and $\tilde{\Sigma}$ is a closed
  congruence on $\hat{\mathbb N}=\Om 1{Com}$, the relation $\rho$ is a
  closed congruence on $\hat{\mathbb Z}$. We have
  $\pi(\Sigma)\subseteq \Sigma_0$, because each pseudoidentity
  $x^{\omega+\lambda}=x^{\omega+\mu}$ is provable from
  $x^\lambda=x^\mu$. Hence, $\pi(\Sigma)\subseteq \tilde{\Sigma}$,
  where $\pi(\Sigma)$ is a relation on~$\hat{\mathbb Z}$. Therefore,
  $\pi(\Sigma)\subseteq \rho$ and the transitive-topological closure
  $\tau=\widetilde{\pi(\Sigma)}$ is also a subset of the closed
  congruence $\rho$. The reverse inclusion $\rho\subseteq \tau$ will
  be proved if we establish (by the induction), for each $\alpha$, the
  inclusion $\Sigma_\alpha |_{\hat{\mathbb Z}\times \hat{\mathbb Z}}
  \subseteq (\pi(\Sigma))_\alpha$.

  Let $\alpha=0$ and $(x^\lambda,x^\mu)\in \Sigma_0$, where
  $\lambda,\mu\in\hat{\mathbb Z}$. Taking into account 
  commutativity, we may assume that $x^\lambda=x^{a+bk}$,
  $x^\mu=x^{a+b\ell}$, where $a,b,k,\ell\in \hat{\mathbb N}$ and
  $(x^k=x^\ell) \in \Sigma$. Then, $(x^{\omega+k}=x^{\omega+\ell})\in
  \pi(\Sigma)$ and we have $(x^{a+b(\omega+k)}=x^{a+b(\omega+\ell)}) \in
  (\pi(\Sigma))_0$. Since $\lambda,\mu\in\hat{\mathbb Z}$, we see that
  $a+b(\omega+k)=a+bk+\omega=\lambda+\omega=\lambda$ and, similarly,
  $a+b(\omega+\ell)=\mu$, which yields $\Sigma_0 |_{\hat{\mathbb Z}\times
    \hat{\mathbb Z}} \subseteq (\pi(\Sigma))_0$.

  Now assume that $\Sigma_{2\gamma} |_{\hat{\mathbb Z}\times
    \hat{\mathbb Z}} \subseteq (\pi(\Sigma))_{2\gamma}$ and let
  $(x^\lambda=x^\mu)\in \Sigma_{2\gamma+1}$, where
  $\lambda,\mu\in\hat{\mathbb Z}$. Then there is a finite sequence
  $\lambda=\lambda_0,\lambda_1, \dots ,\lambda_m=\mu\in\hat{\mathbb
    N}$ such that
  $(x^{\lambda_{i-1}}=x^{\lambda_{i}})\in\Sigma_{2\gamma}$, for each
  $i=1,\dots ,m$. Since $\Sigma_{2\gamma}$ is stable under
  multiplication, we deduce that
  $(x^{\lambda_i+\omega}=x^{\lambda_{i+1}+\omega})\in\Sigma_{2\gamma}|_{\hat{\mathbb
      Z}\times \hat{\mathbb Z}} $. By the induction assumption, these
  pairs also belong to $(\pi(\Sigma))_{2\gamma}$ and since
  $\lambda+\omega=\lambda$ and $\mu+\omega=\mu$, we obtain
  $(x^{\lambda}=x^{\mu})\in (\pi(\Sigma))_{2\gamma+1}$.

  Let $\Sigma_{2\gamma+1} |_{\hat{\mathbb Z}\times \hat{\mathbb Z}}
  \subseteq (\pi(\Sigma))_{2\gamma+1}$ and assume that
  $(x^\lambda=x^\mu)\in \Sigma_{2\gamma+2}$, where
  $\lambda,\mu\in\hat{\mathbb Z}$. Then, there is an infinite sequence
  of pseudoidentities from $\Sigma_{2\gamma+1}$ converging to
  $x^\lambda=x^\mu$. Multiplying the pseudoidentities in the sequence
  by $x^\omega$ and using the fact that $\Sigma_{2\gamma+1}$ is a
  congruence, we obtain the sequence of pseudoidentities from
  $\Sigma_{2\gamma+1} |_{\hat{\mathbb Z}\times \hat{\mathbb Z}}$ with
  the limit $x^\lambda=x^\mu$. Now, using the induction hypothesis we
  obtain, that $(x^{\lambda}=x^{\mu})\in (\pi(\Sigma))_{2\gamma+2}$.

  To accomplish the proof of the claim by transfinite induction we
  need to consider a limit ordinal $\alpha$, but the inclusion follows
  immediately from the inclusions for smaller ordinals.

  Now, we are ready to finish the proof of the theorem. We assumed
  that $\op\Sigma\cl$ satisfies the pseudoidentity
  $x^\varepsilon=x^\delta$, where $\varepsilon,\delta\in \hat{\mathbb
    Z}$. We need to show that $(x^\varepsilon=x^\delta)\in
  \tilde{\Sigma}$, or equivalently written $(\varepsilon,\delta)\in
  \rho$. Assume on the contrary that $(\varepsilon,\delta)\not\in
  \rho$. We have proved the equality of two closed congruences $\rho$
  and $\tau$. So, we can consider the monoid $\hat{\mathbb
    Z}/\rho=\hat{\mathbb Z}/\tau$, which is a profinite monoid,
  because it is a quotient of $\Om 1G$ by the relation
  $\tau=\tilde{\Gamma}$ for $\Gamma=\pi(\Sigma)$ and $\pv G$ is strong
  by Corollary~\ref{c:conjecture-group-case}. Thus, there is a
  continuous homomorphism $f : \hat{\mathbb Z} \rightarrow G$ onto a
  finite (cyclic) group $G\in \op \pi(\Sigma) \cl_{\pv G} \subseteq
  \op\Sigma\cl $, such that
  $f(\varepsilon)\not=f(\delta)$. Now, we can consider the composition
  of $f$ with $\pi$ and we obtain the continuous homomorphism $g:\Om
  1{Com} \rightarrow G$, $g(x^\lambda)=f(\lambda+\omega)$, such that
  $g(x^\varepsilon)\not=g(x^\delta)$. This is a contradiction because
  the group $G\in \op\Sigma\cl$ must satisfy the pseudoidentity
  $x^\varepsilon=x^\delta$. Therefore, we have
  $(x^\varepsilon=x^\delta)\in \tilde{\Sigma}$ and the proof is
  finished.
\end{proof}

To complete the program already followed in Sections~\ref{sec:groups}
and~\ref{sec:CS}, we establish missing strongness property for the
pseudoidentity $xy=yx$. It was not established earlier, namely as part
of Proposition~\ref{p:t-strong}, because our argument depends on
Corollary~\ref{c:H}.

\begin{Prop}
  \label{p:Com-t-strong}
  The pseudoidentity of monoids $xy=yx$ is t-strong.
\end{Prop}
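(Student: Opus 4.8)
The plan is to show that whenever $\Sigma$ is a set of $\pv M$-pseudoidentities with $\op\Sigma\cl\subseteq\pv{Com}$, the pair $(xy,yx)$ lies in $\tilde\Sigma$ in $\Om{\{x,y\}}M$. The method is the ``excluded structures'' technique of Proposition~\ref{p:t-strong}; the new feature is that a complete set of finite monoids excluded by $\pv{Com}$ contains, besides the small non-commutative monoids $B(1,2)^1$, $T^1$, $N^1$ and their duals, \emph{all} finite non-abelian groups, and it is Corollary~\ref{c:H} that lets us dispose of this infinite family in one stroke.

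First I would record two consequences of the hypothesis. Since $\pv{Com}$ satisfies $x^\omega y=yx^\omega$, which by Proposition~\ref{p:t-strong}(\ref{item:ZE-t-strong}) is t-strong, $\Sigma$ proves $x^\omega y=yx^\omega$; as $\tilde\Sigma$ is fully invariant, $\Sigma$ then proves $u^\omega v=vu^\omega$ for all $u,v\in\Om{\{x,y\}}M$, so in $N:=\Om{\{x,y\}}M/\tilde\Sigma$ every idempotent is central (each idempotent is an $\omega$-power). Secondly, it is straightforward that every finite continuous homomorphic image of~$N$ satisfies $\Sigma$, hence lies in $\op\Sigma\cl\subseteq\pv{Com}$ and is commutative; in particular its maximal subgroups are abelian. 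The role of the small excluded monoids is, via the usual substitution arguments, either to force that $\Sigma$ proves $x^\omega=1$ — in which case $\op\Sigma\cl$ is a group pseudovariety and, being contained in $\pv{Com}\cap\pv G=\pv{Ab}$, we are done at once by Corollary~\ref{c:H} — or to force that $\Sigma$ proves enough two-variable pseudoidentities to make $\bar x\bar y$ and $\bar y\bar x$ $\mathcal J$-equivalent and of the same content in~$N$.

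The key step, and the point where Corollary~\ref{c:H} enters, handles the group obstructions. Put $\Sigma'=\Sigma\cup\{x^\omega=1\}$. Then $\op{\Sigma'}\cl=\op\Sigma\cl\cap\pv G\subseteq\pv{Ab}$ is a group pseudovariety, so by Corollary~\ref{c:H} the set $\Sigma'$ is h-strong within $\pv M$; since $xy=yx$ holds in $\op{\Sigma'}\cl$, $\Sigma'$ proves $xy=yx$. Equivalently, $(\bar x\bar y,\bar y\bar x)$ belongs to the smallest closed congruence $\rho$ on~$N$ identifying $\bar s^\omega$ with $\bar 1$ for every $\bar s\in N$. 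This already yields an unconditional fact: localising at the central idempotent $\bar e=\overline{x^\omega y^\omega}$, the map $w\mapsto\bar ew$ is a retraction homomorphism of~$N$ onto $\bar eN$, which is topologically generated by the two units $\bar e\bar x,\bar e\bar y$ (their $\omega$-powers equal $\bar e$, by centrality of $\omega$-powers), hence $\bar eN$ is a profinite group, and it is pro-abelian because its finite quotients are finite continuous images of~$N$ and so lie in $\pv{Ab}$; consequently $\Sigma$ proves $x^\omega y^\omega\,xy=x^\omega y^\omega\,yx$.

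The main obstacle is the remaining passage from ``$xy=yx$ is provable once $\omega$-powers may be deleted'' (that is, $(\bar x\bar y,\bar y\bar x)\in\rho$, together with the provability of $x^\omega y^\omega xy=x^\omega y^\omega yx$) to ``$xy=yx$ is provable'', i.e.\ $(\bar x\bar y,\bar y\bar x)\in\tilde\Sigma$. The difficulty is genuine: deleting a central idempotent factor is not a valid inference even in $\pv{Com}$ (there $x^m\neq x^{\omega+m}$), so the elimination cannot be carried out occurrence by occurrence. The intended argument re-runs the $\Sigma'$-proof of $xy=yx$ as a transfinite construction inside~$N$, mirroring the construction of $\tilde\Sigma$ in Section~\ref{sec:scheme}, but replaces each use of a relation $(\bar s^\omega,\bar 1)$ by a detour that does not discard $\bar s^\omega$: one slides it, using centrality of $\omega$-powers, into the factor $\bar e$, where the already-established commutativity of $\bar eN$ and of every finite quotient of~$N$, together with the content and $\mathcal J$-class information obtained above, let one close the induction. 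Making this bookkeeping work — in particular arranging that the discarded $\omega$-powers can always be collected into $\bar e$ without interfering with the two ends $\bar x\bar y$ and $\bar y\bar x$ — is the technical heart of the proof; the rest follows the routine transfinite-proof manipulations already used in Sections~\ref{sec:scheme} and~\ref{sec:transfer}.
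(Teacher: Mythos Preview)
Your overall architecture matches the paper's --- centrality of idempotents via Proposition~\ref{p:t-strong}(\ref{item:ZE-t-strong}), then $\Sigma'=\Sigma\cup\{x^\omega=1\}$ and Corollary~\ref{c:H} to prove $xy=yx$ from $\Sigma'$, then a transfinite induction showing that every use of $x^\omega=1$ in that proof can be absorbed into the factor $x^\omega y^\omega$ --- but there is a concrete missing step that the paper isolates and you do not.

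The paper's analysis of the excluded monoid $N^1$ yields a sharp dichotomy: $\Sigma$ proves either $xy=yx$ outright, or the completely-regular law $x^{\omega+1}=x$. You replace this by the vaguer conclusion that $\bar x\bar y$ and $\bar y\bar x$ become $\mathcal J$-equivalent with the same content in~$N$; that is not enough. The point is that once $\Sigma$ proves $x^{\omega+1}=x$ together with $x^\omega y=yx^\omega$, one can prove $(xy)^\omega=x^\omega y^\omega$ and hence $w^\omega x^\omega y^\omega=x^\omega y^\omega$ for every pseudoword $w$ over $\{x,y\}$; this is exactly the fact that makes the base case of the transfinite induction go through (an instance of $s^\omega=1$ in a $\Sigma'$-proof, after multiplying by $x^\omega y^\omega$, becomes $\varphi(s)^\omega x^\omega y^\omega=x^\omega y^\omega$, which is now genuinely $\Sigma$-provable). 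Equally important, $x^{\omega+1}=x$ gives $xy=xy\cdot(xy)^\omega=xy\cdot x^\omega y^\omega$ and $yx=yx\cdot x^\omega y^\omega$, which is how one removes the idempotent at the very end. Your final paragraph correctly identifies this removal as the obstacle and then defers it to ``bookkeeping''; without the CR reduction that bookkeeping cannot be closed, and with it the induction becomes the clean statement
\[
(u=v)\in\Gamma_\alpha\ \Longrightarrow\ (u\,x^\omega y^\omega=v\,x^\omega y^\omega)\in\tilde\Sigma,
\]
proved exactly as in Section~\ref{sec:transfer}. A secondary issue: your appeal to ``$\bar eN$ is a profinite group because its finite quotients are images of $N$'' presupposes that $N=\Om{\{x,y\}}M/\tilde\Sigma$ is profinite, which is essentially what h-strongness of $\Sigma$ would give and is not available here; the paper avoids this by working directly with the $\Gamma_\alpha$'s rather than with quotient structures.
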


\begin{proof}
  Let $\Sigma$ be a set of \pv M-pseudoidentities such that all
  monoids in $\op\Sigma\cl$ are commutative. We need to show that
  $\Sigma$ proves~$xy=yx$.

  In the terminology introduced at the end of
  Section~\ref{sec:t-strong}, by a theorem of Margolis and Pin
  \cite{Margolis&Pin:1984c}, a complete set of excluded monoids for
  the pseudoidentity $xy=yx$ is given by all non-Abelian groups
  together with $N^1$, $B(1,2)^1$, and $B(2,1)^1$. Hence, each such
  monoid fails some pseudoidentity from~$\Sigma$. Actually, since
  $xy=yx$ entails $x^\omega y=yx^\omega$ and the latter pseudoidentity
  is t-strong by Proposition~\ref{p:t-strong}(\ref{item:ZE-t-strong}),
  we already know that $\Sigma$ proves $x^\omega y=yx^\omega$.

  On the other hand, as $N^1$~fails some pseudoidentity $u=v$
  from~$\Sigma$, either $\Sigma$ proves $x^\omega=1$ or $N^1$ fails a
  two-variable pseudoidentity provable from $\Sigma$ by evaluating $x$
  as~$a$ and $y$ as~$b$. Taking into account that the only nonzero
  product involving at least two factors using either $a$, $b$, or
  both is $ab$, we deduce that $\Sigma$ proves either $x=x^{\omega+1}$
  or a nontrivial pseudoidentity of the form $xy=w$. If $w$ is not
  $yx$, then one may substitute one of the variables by~$1$ to get a
  nontrivial pseudoidentity of the from $x=x^\alpha$, which again
  entails $x=x^{\omega+1}$ by Lemma~\ref{l:1-var-pseudoidentities}.
  Hence, $\Sigma$ deduces either $x=x^{\omega+1}$ or $xy=yx$, which
  means that we may as well assume that $\Sigma$ proves the former
  pseudoidentity. Thus, $\Sigma$ proves the pseudoidentities
  \begin{align}
    \label{eq:CR}
    x^{\omega+1}&=x \\
    \label{eq:ZE}
    x^\omega y&=yx^\omega,
  \end{align}
  which are known to define the join $\pv{Sl}\vee\pv G$,
  that is, the pseudovariety of all finite monoids that are
  semilattices of groups \cite[Exercise~9.1.4]{Almeida:1994a}. Since
  it is not so easy to deduce pseudoidentities from $\Sigma$ from the
  knowledge that non-Abelian groups fail some pseudoidentity
  in~$\Sigma$, we proceed instead to reduce our problem to the group
  case and apply Corollary~\ref{c:H} to draw the conclusion that
  $\Sigma$ proves $xy=yx$.

  First, we exhibit other provable consequences of $\Sigma$. Since we
  deal with the pseudoidentity $xy=yx$, we work over the alphabet
  $A=\{x,y\}$ only, even though more general pseudoidentities may be
  handled similarly. In fact we list some useful pseudoidentities
  that are provable from~\eqref{eq:CR} and~\eqref{eq:ZE}.

  We have $(xy)^\omega= x^\omega (xy)^\omega y^\omega=y^\omega
  (xy)^\omega x^\omega$, where we used \eqref{eq:CR} first and
  \eqref{eq:ZE} in the second step. Then we get $y^\omega (xy)^\omega
  x^\omega=y^{\omega-1}(yx)^{\omega+1} x^{\omega-1}= y^{\omega-1} yx
  x^{\omega-1}=y^\omega x^\omega=x^\omega y^\omega$, where the first
  and third equality hold in $\Om AM$ and the second and fourth
  equality follow respectively from \eqref{eq:CR} and~\eqref{eq:ZE}.
  Now, for each word $w\in A^*$ containing both variables $x$ and $y$,
  if we use the pseudoidentities $(xy)^\omega =x^\omega y^\omega$ and
  \eqref{eq:ZE} repeatedly, we may prove $w^\omega=x^\omega y^\omega$.
  Hence, from \eqref{eq:CR} and~\eqref{eq:ZE} we may prove $w^\omega
  x^\omega y^\omega= x^\omega y^\omega$ for every pseudoword $w$ over
  the alphabet $\{x,y\}$.

  Recall that $\tilde{\Sigma}$ is a relation on $\Om AM$. Let
  $\Gamma=\Sigma\cup\{x^\omega=1\}$ and consider $\tilde{\Gamma}$ on
  the same monoid $\Om AM$. Then $\op\Gamma\cl= \op\Sigma\cl\cap \pv
  G$. Applying Corollary~\ref{c:H} we obtain that $(xy=yx)\in
  \tilde{\Gamma}$. We claim, for any pseudoidentity $u=v$, that
  $(u=v)\in \tilde{\Gamma}$ implies $(ux^\omega y^\omega=vx^\omega
  y^\omega)\in \tilde{\Sigma}$. This gives the proof of the statement
  because the pseudoidentities $xy=xyx^\omega y^\omega$ and $yx=yx
  x^\omega y^\omega$ are provable from \eqref{eq:CR} and~\eqref{eq:ZE}.

  The claim will be proved if we show, for every ordinal $\alpha$, the
  following implication:
  $$(u=v)\in \Gamma_\alpha \implies (ux^\omega y^\omega=vx^\omega
  y^\omega)\in \tilde{\Sigma}\, .$$

  Let $\alpha=0$ and $(u=v)\in \Gamma_0$. If we use in the proof of
  $u=v$ a pseudoidentity from $\Sigma$, then $(u=v)\in \Sigma_0$ and
  therefore also $(ux^\omega y^\omega=vx^\omega y^\omega) \in
  \Sigma_0$. So, we may assume, without loss of generality, that
  $u=\mathbf{t}(\varphi(x^\omega),w_1,\ldots,w_n),
  v=\mathbf{t}(\varphi(1),w_1,\ldots,w_n)$, where $\varphi:\Om
  1M\to\Om AM$ is a continuous homomorphism, $\mathbf{t}$ is a word
  and $w_i\in\Om AM$ ($i=1,\ldots,n$). Using \eqref{eq:ZE} and the
  fact that $(\varphi(x))^\omega x^\omega y^\omega=x^\omega y^\omega$
  is $\Sigma$-provable, we get that $(ux^\omega y^\omega=vx^\omega
  y^\omega) \in \tilde{\Sigma}$.

  The other steps of the induction proof are easy to see as
  $\tilde{\Sigma}$ is a closed congruence.
\end{proof}

As in Sections~\ref{sec:groups} and~\ref{sec:CS}, we may now apply
Proposition~\ref{p:transfer-up} to obtain the following result.

\begin{Cor}
  \label{c:Com}
  For $\pv U=\pv M$, every set $\Sigma$ of pseudoidentities defining a
  pseudovariety of commutative monoids is h-strong.\qed
\end{Cor}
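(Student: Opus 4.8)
The plan is to obtain this corollary exactly as Corollary~\ref{c:H} was obtained in the group case and Corollary~\ref{c:CS} in the completely simple case, namely by a single application of the transfer result, Proposition~\ref{p:transfer-up}. Take $\pv U=\pv M$ and $\pv V=\pv{Com}$, which is a subpseudovariety of~\pv M, and let $\Sigma$ be the given set of \pv M-pseudoidentities; the hypothesis that $\Sigma$ defines a pseudovariety of commutative monoids is precisely the inclusion $\op\Sigma\cl_\pv M\subseteq\pv{Com}=\op xy=yx\cl_\pv M$ required of the set $\Gamma$ in Proposition~\ref{p:transfer-up}.

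Next I would verify the hypotheses that Proposition~\ref{p:transfer-up} imposes on the basis. For \pv{Com} we use the one-element basis $\{xy=yx\}$ of \pv M-pseudoidentities: it is h-strong within~\pv M by Theorem~\ref{t:Com}, and it consists of a single t-strong pseudoidentity by Proposition~\ref{p:Com-t-strong}. Finally, writing $\pi\colon\Om AM\to\Om A{Com}$ for the natural projection and $\Sigma'=\{\pi(u)=\pi(v):(u=v)\in\Sigma\}$, we need $\Sigma'$ to be h-strong within~\pv{Com}; this is immediate since \pv{Com} is strong by Theorem~\ref{t:strong-Com}, so every set of \pv{Com}-pseudoidentities, $\Sigma'$ in particular, is h-strong within~\pv{Com}. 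With all hypotheses in place, Proposition~\ref{p:transfer-up} yields that $\Sigma$ is h-strong within~\pv M, as required.

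There is no real obstacle left at this stage: the substantive work has already been carried out in the preceding results --- $\kappa$-reducibility of \pv{Com} for the equation $x=y$ together with the explicit basis of $\pv{Com}^\kappa$ feeding Theorem~\ref{t:Com}, the analysis of closed congruences on $\Om 1{Com}\cong\hat{\mathbb N}$ via its retraction onto $\Om 1G$ behind Theorem~\ref{t:strong-Com}, and the excluded-monoid argument (non-Abelian groups together with $N^1$, $B(1,2)^1$, and $B(2,1)^1$) reducing t-strongness of $xy=yx$ to the group case in Proposition~\ref{p:Com-t-strong}. Thus the only thing to check here is the bookkeeping of matching these three facts to the three hypotheses of Proposition~\ref{p:transfer-up}, which is routine.
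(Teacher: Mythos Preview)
Your proposal is correct and follows exactly the approach the paper takes: the text preceding the corollary says ``As in Sections~\ref{sec:groups} and~\ref{sec:CS}, we may now apply Proposition~\ref{p:transfer-up}'', and the three ingredients you identify---Theorem~\ref{t:Com}, Proposition~\ref{p:Com-t-strong}, and Theorem~\ref{t:strong-Com}---are precisely the ones needed to feed that proposition.
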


\section{Conclusion}
\label{sec:conclusion}

We have introduced a natural and sound proof scheme for
pseudoidentities. We have given ample evidence for the conjecture that
our proof scheme is complete. There is a nice connection with the much
studied notions of reducibility and tameness. In fact, in all our
examples built on reducible pseudovarieties, we have shown that
$\Sigma_n=\tilde{\Sigma}$ for some integer $n$. But, in
Proposition~\ref{p:J-notSigma2}, we also established that $n=2$ is not
enough. It is expectable that in general there may be no $n\ge1$ such
that $\Sigma_n=\tilde{\Sigma}$, or perhaps even
$\Sigma_\omega\ne\tilde{\Sigma}$, but we have found no example in
which that is the case.

We have not tried to treat exhaustively all cases of pseudovarieties
which are known to be tame. Those that we have considered are perhaps
those that appear more frequently in the literature. Of course, since
we believe in our conjecture, more such results would only be adding
evidence to our claim that we have identified a completely general
phenomenon.

It is the knowledge of congruences on suitable relatively profinite
monoids or semigroups that allowed us to prove that the
pseudovarieties \pv G, \pv{CS}, and \pv{Com} are strong. Much is also
known about the congruences on~\Om A{CR} but we have not taken the
natural path of trying to prove that the pseudovariety \pv{CR} is
strong.

\section*{Acknowledgments}

The first author acknowledges partial funding by CMUP
(UID/MAT/ 00144/2013) which is funded by FCT (Portugal) with national
(MATT'S) and European structural funds (FEDER) under the partnership
agreement PT2020. %
The second author was supported by Grant 15-02862S of the Grant Agency
of the Czech Republic.

\bibliographystyle{amsplain}
\bibliography{sgpabb,ref-sgps}

\end{document}